\numberwithin{equation}{section}
\newtheorem{theorem}{Theorem}[section]
\newtheorem{corollary}[theorem]{Corollary}
\newtheorem{proposition}[theorem]{Proposition}
\newtheorem{lemma}[theorem]{Lemma}
\newtheorem{remark}[theorem]{Remark}
\newtheorem{definition}[theorem]{Definition}
\theoremstyle{plain}
\def\N{{\mathbb N}}
\def\Z{{\mathbb Z}}
\def\R{{\mathbb R}}
\newcommand{\E}{{\mathbb E}}
\renewcommand{\P}{{\mathbb P}}
\newcommand{\eft}{\text{left}}
\newcommand{\ight}{\text{right}}
\newcommand{\res}{\text{\normalfont res}}
\newcommand{\abs}[1]{\ensuremath{\left\lvert #1 \right\rvert}} 
\DeclareRobustCommand{\stirling}{\genfrac\{\}{0pt}{}}
\newcommand*\pFq[6][8]{%
	\begingroup 
	\pFqmuskip=#1mu\relax
	\mathcode`\,=\string"8000
	\begingroup\lccode`\~=`\,
	\lowercase{\endgroup\let~}\pFqcomma
	{}_{#2}F_{#3}{\left[\genfrac..{0pt}{}{#4}{#5};#6\right]}%
	\endgroup
}
\newcommand{\pFqcomma}{\mskip\pFqmuskip}
\newcommand{\IRW}{\text{\normalfont IRW}}
\newcommand{\SIP}{\text{\normalfont SIP}}
\newcommand{\SEP}{\text{\normalfont SEP}}
\newcommand{\cond}{\omega}
\newcommand{\scale}{\theta}
\newcommand{\Scale}{\varTheta}
\newcommand{\cl}{{c\ell}}
\newcommand{\oo}{{or}}
\newcommand{\Bin}{\text{\normalfont Binomial}}
\newcommand{\Poi}{\text{\normalfont Poisson}}
\newcommand{\NegBin}{\text{\normalfont Negative-Binomial}}
\renewcommand{\Pr}{\mathbb{P}}
\newcommand{\bulk}{\text{\normalfont bulk}}
\newcommand{\bdd}{{L,R}}
\newcommand{\dd}{\text{\rm d}}
\newcommand{\1}{{\boldsymbol 1}}
\newcommand{\bubu}{b}
\newcommand{\footremember}[2]{%
	\footnote{#2}
	\newcounter{#1}
	\setcounter{#1}{\value{footnote}}%
}
\let\@fnsymbol\@arabic
\title{{\bf \Large 
		Orthogonal polynomial duality of \\
		boundary driven  particle systems and non-equilibrium correlations}}
\author{Simone Floreani\footremember{Delft}{Delft Institute of Applied Mathematics, TU Delft,
		Delft, The Netherlands. \href{mailto:s.floreani@tudelft.nl}{s.floreani@tudelft.nl}.}
	\and
		Frank Redig\footremember{Delft2}{Delft Institute of Applied Mathematics, TU Delft,
			Delft, The Netherlands. \href{mailto:f.h.j.redig@tudelft.nl}{f.h.j.redig@tudelft.nl}.}
	\and
Federico Sau\footremember{ISTAustria}{Institute of Science and Technology, IST Austria, Klosterneuburg, Austria. \href{mailto:federico.sau@ist.ac.at}{federico.sau@ist.ac.at}.}}
\providecommand{\keywords}[1]
{
	\small	
	\textbf{{Keywords ---}} #1
}
\begin{document}
\maketitle

	\begin{abstract}
		We consider symmetric  partial  exclusion and 	 inclusion processes in a general graph in contact with reservoirs, where we allow both
		for edge disorder and well-chosen site disorder.
		We extend the classical dualities to this  context and then  we derive new orthogonal polynomial dualities. From the classical dualities, we derive the uniqueness of the non-equilibrium steady state and obtain correlation  inequalities.
		Starting from the orthogonal polynomial dualities, we show universal properties of $n$-point correlation functions in the non-equilibrium steady state for  systems with at most   two different reservoir parameters, such as a chain with  reservoirs at left and right ends. 
	\end{abstract}
\keywords{Interacting particle systems; boundary driven systems; duality; orthogonal polynomial duality; non-equilibrium stationary measure; non-equilibrium stationary correlations; symmetric exclusion process; symmetric inclusion process.}

\section{Introduction}
Exactly solvable models have played an important role in the understanding of fundamental properties of non-equilibrium steady states such as the presence of long-range correlations and the non-locality of large deviation free energies \cite{derrida_exact_1993-1, derrida_entropy_2007, bertini_macroscopic_2015,gilbert_heat_2017}.
An important class of particle systems which is slightly broader than exactly solvable models are the models which satisfy self-duality or, more generally, duality properties. Such systems when coupled to appropriate reservoirs are dual to systems where the reservoirs are replaced by absorbing boundaries, and the computation of $n$-point correlation functions in the original system reduces to the computation of absorption probabilities in a dual system with $n$ particles. Even when these absorption probabilities cannot be obtained in closed form, e.g.\ when Bethe ansatz is not available, still the connection between the non-equilibrium system coupled to reservoirs and the absorbing dual turns out to be very useful to obtain macroscopic properties such as the hydrodynamic limit, fluctuations, mixing and propagation of chaos and local equilibrium (see e.g.\ \cite{goncalves_non-equilibrium_2019, landim_stationary_2006, nandori_local_2016, franceschini2020symmetric, gantert2020mixing}).

In recent works (self-)duality with orthogonal polynomials has been studied in several particle systems including generalized symmetric exclusion processes ($\SEP$), symmetric inclusion process ($\SIP$) and associated diffusion processes such as the Brownian momentum process. Orthogonal polynomials in the occupation  number variables are a natural extension of the higher order correlation functions studied in
$\SEP$ in \cite{derrida_entropy_2007}.
Orthogonal polynomial duality is very useful in the study of fluctuation fields \cite{ayala2020higher, chen2020higher}, identifies a set of functions with positive time dependent correlations and is useful in the study of speed of relaxation to equilibrium \cite{carinci_book}.
So far, orthogonal polynomial duality has not been obtained in the context of boundary driven systems.

In this paper we start extending the classical dualities from \cite{carinci_duality_2013-1} for a generalized class of boundary driven systems, where we allow both for edge disorder and well-chosen site disorder. 
We then use a symmetry of the dual absorbing system in order to derive duality with orthogonal polynomials for these systems.

More precisely, we consider three classes of interacting particle systems: partial  symmetric exclusion \cite{floreani2019hydrodynamics} where we allow edge-dependent conductances and a site-varying maximal occupancy,  symmetric inclusion where we allow edge-dependent conductances and a site-varying ``attraction parameter", and independent walkers.
We couple these systems to two reservoirs, with reservoir parameters $\scale_L$ and $\scale_R$. The precise meaning of the reservoir parameters $\scale_L$ and $\scale_R$ will be explained in detail later; for the moment one can think of them -- roughly -- as being proportional to the densities of left and right reservoirs, respectively. Moreover, the bulk system can be defined on any graph. Hence, our setting  includes the standard one of a chain coupled to reservoirs at left and right ends, but it is in no way restricted to that setting. The only important geometrical requirement is the presence of precisely two reservoirs. When $\scale_L=\scale_R=\scale$ the system is in equilibrium, with a unique reversible product measure $\mu_\scale$. When $\scale_L\not=\scale_R$ the system evolves towards a unique non-equilibrium stationary measure $\mu_{\scale_L, \scale_R}$. At stationarity, by means of classical dualities  with a dual system that has two absorbing sites, corresponding to the reservoirs in the original system, we obtain correlation inequalities, thereby extending and strengthening those from \cite{giardina_correlation_2010}.
In particular, the dual particle system dynamics does not depend on the reservoir parameters $\scale_L$ and $\scale_R$.

Next, for the same pair of boundary driven and purely absorbing systems, we introduce   orthogonal polynomial dualities.     The orthogonal duality functions are in product form and the factors associated to the bulk sites are the same orthogonal polynomials as those appearing for the same particle systems not coupled to reservoirs (see e.g.\ \cite{franceschini_stochastic_2017, redig_factorized_2018}), while the remaining factors corresponding to the absorbing sites have a form depending on the reservoir parameters.
The orthogonal polynomials carry themselves a parameter $\scale$ which corresponds to the equilibrium reversible product measure $\mu_\scale$ w.r.t.\ which they are orthogonal.

We then give various applications of these orthogonal polynomial dualities to properties of correlation functions in the non-equilibrium stationary measure $\mu_{\scale_L, \scale_R}$. First we prove that the correlations of order $n$ of the occupation variables at
different locations $x_1, \ldots, x_n$, as well as the cumulants of order $n$, are of the form
$(\scale_L-\scale_R)^n$ multiplied by a universal function $\psi$ which depends only on $x_1,\ldots, x_n$ and	 the dual particle system dynamics, thus, not depending on $\scale_L$ and $\scale_R$.
We prove, in fact, a stronger result, namely that whenever the system is started from a local equilibrium product measure, then,
at any later time $t>0$, the $n$-point correlations are of the form $(\scale_L-\scale_R)^n$ multiplied by a universal function $\psi_t$ which, again,   does not depend  on the reservoir parameters $\scale_L$ and $\scale_R$, but only on the dual system dynamics. 

Finally, we relate the joint moment generating function of the occupation variables to an expectation in the absorbing dual. Despite the fact that this quantity can in general not be obtained in analytic form, the relation is useful, both from the point of view of simulations, as well as from the point of view of computing macroscopic limits such as  density fluctuation fields and  large deviations of the density profile.

\subsection{Summary of main results, related works  and organization of the paper}
As a conclusion of this introduction,   we summarize more schematically here, for the convenience of the reader,    our main contributions in relation to previous works and how we organize the rest of the paper.
 
 We introduce a  class of  boundary driven particle systems in a general inhomogeneous framework -- generalizing, in particular, those considered in, e.g., \cite{carinci_duality_2013-1, derrida_entropy_2007} -- showing that classical dualities may be extended beyond homogeneous systems. As a first main result, employing these classical dualities, we  show that \emph{correlations} of interacting systems are \emph{monotone  in time} when starting from suitable local equilibrium product measures. As a consequence, we deduce a family of correlation inequalities,  improving on those established for homogeneous  symmetric exclusion  and inclusion processes in, e.g., \cite{giardina_duality_2007, giardina_correlation_2010, liggett_interacting_2005-1}. 	
 
As a second main result, in our  context of boundary driven systems, we derive the \emph{orthogonal polynomial dualities}, 
 previously studied in \cite{franceschini2020symmetric, redig_factorized_2018, carinci2019orthogonal, groenevelt_orthogonal_2018} for closed systems. To this purpose, we develop a 	new method, which is of independent interest and is based on the relation between orthogonal and  classical duality functions. 
 For further details, we refer to the discussion following Theorem \ref{proposition:orthogonal_duality}.
 
As a third main result, by suitably tilting these orthogonal dualities, we show that \emph{$n$-point non-equilibrium stationary correlations} and \emph{cumulants} exhibit a \emph{universal factorized  structure},  one factor consisting in a simple expression in the reservoir parameters and the other factor depending only on the underlying geometry of the system. This result holds for both boundary driven  exclusion and inclusion processes in presence of edge and site disorder. In particular, for these more general systems, this result recovers   the same structure previously obtained for the boundary driven one-dimensional $\SEP$ in \cite{derrida_entropy_2007} by means of the explicit knowledge via matrix formulation of the non-equilibrium steady state.

The rest of our paper is organized as follows. In Section \ref{section models} we define the  boundary driven particle systems and their dual absorbing processes as well as introducing the classical duality functions. In Section \ref{section:measures} we study properties and correlation inequalities for the equilibrium and non-equilibrium stationary measures. In Section \ref{section:orthogonal_duality} we derive orthogonal duality functions between the boundary driven and the absorbing systems. In Section \ref{section:n-point} we obtain the aforementioned universal expression for the higher order correlations in the non-equilibrium steady state. In the same section, the same structure is recovered for more general correlations at finite times when started from a local equilibrium product measure. Section \ref{section:exponential_generating_functions} is devoted to a relation between weighted exponential generating functions of the occupation variables at stationarity and the correlation functions obtained in the previous section. In conclusion, Appendix \ref{appendix:existence_uniqueness} contains part of the proof of Theorem \ref{theorem:existence_uniqueness} in Section \ref{section:measures}.

\section{Setting} \label{section models}
In this section, we start by  introducing the common geometry and the disorder on which the particle  dynamics takes place. Then, we couple this \textquotedblleft bulk\textquotedblright\ system to two reservoirs at possibly different densities.

\subsection{Boundary driven particle systems}\label{section:particle_systems}

We consider three particle systems with either an exclusion, inclusion or no interaction. All these systems will evolve on a set of sites $V = \{1,\ldots, N \}$ ($N \in \N$) and the  rate  of particle exchanges between two sites $x$ and $y \in V$ will be proportional to some given (symmetric) conductance $\cond_{\{x,y\}} \in [0,\infty)$. Sites $x$ and $y \in V$ for which $\cond_{\{x,y\}} \neq 0$ will be considered as connected, indicated by $x \sim y$. In what follows, we will assume that $\cond_{\{x,x\}}=0$ for all $x \in V$ and that the induced graph $(V,\sim)$ is connected. We will further attach to each site $x \in V$ a value $\alpha_x \in \N$. While the conductances $\boldsymbol \cond=\{\cond_{\{x,y\}}: x, y \in V \}$ represent the bond disorder, the collection $\boldsymbol \alpha=\{\alpha_x: x \in V \}$ stands for the  site disorder. This disorder may be thought, e.g., as a realization of a random environment (see, e.g., \cite{nandori_local_2016, floreani2019hydrodynamics}); however, our work is not focusing on homogenization properties arising from the randomness of the disorder. Instead, we consider the disorder as deterministic and parameterizing the model all throughout the paper.

The set $V$ endowed with the disorder $(\boldsymbol \cond, \boldsymbol \alpha)$ is referred to as \emph{bulk} of the system. This bulk is in contact with a left and a right reservoir through respectively site $1$ and site  $N \in V$. Particle exchanges between the bulk sites and the reservoirs is tuned by a set of non-negative parameters $\cond_L$, $\cond_R$, $\scale_L$, $\scale_R$, $\alpha_L$ and $\alpha_R$ as explained in the paragraph below.

\subsubsection{Particle dynamics} In this setting, for each choice of the parameter $\sigma \in \{-1,0,1\}$, we introduce a boundary driven particle system $\{\eta_t: t\geq 0 \}$ as a Markov process with  $\mathcal X$, given by
\begin{align*}
\mathcal X = \begin{dcases}
\prod_{x \in V} \{0,\ldots, \alpha_x \} &\text{if } \sigma = -1\\[.1cm]
\prod_{x \in V} \{0,1,\ldots \}=
\N_0^{V}&\text{otherwise}\ ,	
\end{dcases}
\end{align*} denoting the configuration space, with $\eta \in \mathcal X$ standing for  a particle configuration and with $\eta(x)$ indicating the number of particles at site $x \in V$ for the configuration $\eta \in \mathcal X$. The particle dynamics is described by the infinitesimal generator  $\mathcal L$, whose action on bounded functions $f : \mathcal X \to \R$ reads as follows:
\begin{align}\label{eq:generator_full}
\mathcal Lf(\eta) = \mathcal L^\bulk f(\eta) + \mathcal L^\bdd f (\eta)\ .
\end{align}
In the above expression, the generator $\mathcal L^\bulk$ describes the bulk part of the dynamics and is given by
\begin{align}\label{eq:generator_bulk}
\mathcal L^\bulk f(\eta) = \sum_{x \sim y} \cond_{\{x,y\}}\,  \mathcal L_{\{x,y\}}f(\eta)
\end{align}
where the summation above runs over the unordered pairs of sites and with the single-bond generator $\mathcal L_{\{x,y\}}$ given by
\begin{align*}
\mathcal L_{\{x,y\}} f(\eta) =&\ \eta(x)\, (\alpha_y+\sigma \eta(y))\, (f(\eta^{x,y})-f(\eta))\\
+&\ \eta(y)\, (\alpha_x+\sigma \eta(x))\, (f(\eta^{y,x})-f(\eta))\ ,
\end{align*}
where $\eta^{x,y} = \eta- \delta_x+\delta_y \in \mathcal X$, i.e.\  the configuration in which a particle (if any) has been removed from $x \in V$ and placed at $y \in V$. The boundary part of the dynamics is described by the generator $\mathcal L^\bdd$ in \eqref{eq:generator_full} as follows:
\begin{align}\label{eq:generator_boundary}
\mathcal L^\bdd f(\eta) =&\ \cond_L\, \mathcal L_L f(\eta) + \cond_R\, \mathcal L_R f(\eta)\ ,
\end{align}
with
\begin{align}\label{eq:L_L}
\nonumber
\mathcal L_L f(\eta) =&\  \eta(1)\, (\alpha_L+\sigma \alpha_L \scale_L)\,	 (f(\eta^{1,-})-f(\eta))\\
+&\ \alpha_L \scale_L\, (\alpha_1+\sigma \eta(1))\, (f(\eta^{1,+})-f(\eta))	
\end{align}
and
\begin{align}\label{eq:L_R}
\nonumber
\mathcal L_R f(\eta) =&\  \eta(N)\, (\alpha_R+\sigma \alpha_R \scale_R) \left(f(\eta^{N,-})-f(\eta)\right)\\
+&\  \alpha_R \scale_R \left(\alpha_N+\sigma \eta(N))\, (f(\eta^{N,+})-f(\eta)\right) \ ,	
\end{align}
where $\eta^{x,-} \in \mathcal X$, resp.\ $\eta^{x,+} \in \mathcal X$, denotes the configuration obtained from $\eta$ by removing, resp.\ adding, a particle from, resp.\ to,   site $x \in V$. In the above dynamics, creation and annihilation of particles occurs at sites $x = 1$ and $x = N$ due to the interaction with a reservoir.

We note that, depending on the choice of the value  $\sigma \in \{-1,0,1\}$ in the definition of the generator $\mathcal L$ in \eqref{eq:generator_full}, we recover either the \emph{symmetric partial exclusion process} ($\SEP$) for $\sigma = -1$, a system of \emph{independent random walkers} ($\IRW$) for $\sigma = 0$ or the \emph{symmetric inclusion process} ($\SIP$) for $\sigma = 1$ in contact with left and right reservoirs and in presence of disorder.

\begin{figure}[h]
	\centering
	\includegraphics[width=1.0\linewidth]{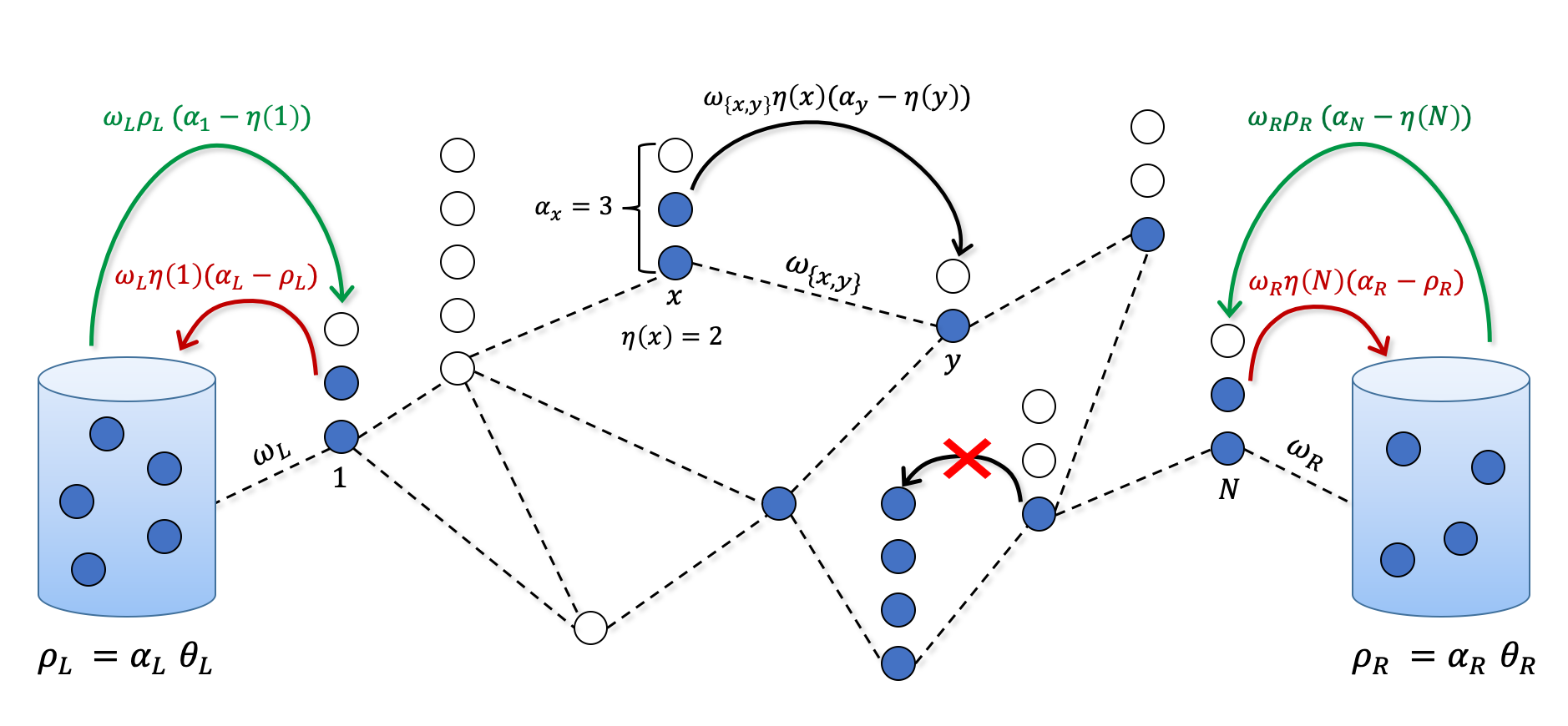}
	\caption{Schematic description of the partial exclusion process ($\SEP$) dynamics in contact with left and right reservoirs.}
	\label{fig:finalimage}
\end{figure}

The parameters $\boldsymbol \alpha = \{\alpha_x: x \in V \} \subset \N$ have the interpretation of \emph{maximal occupancies} for $\SEP$ ($\sigma = -1$) of the sites of $V$ (see Fig.\ \ref{fig:finalimage}). For $\IRW$ ($\sigma = 0$) and $\SIP$ ($\sigma =1$), $\alpha_x \in \N$ stands for the \emph{site attraction parameter} of the site $x \in V$. We observe that the choice  $\boldsymbol \alpha \subset \N$ rather than $(0,\infty)$ is needed only in the context of the exclusion process; however,  for the sake of  uniformity of notation, we adopt  $\N$-valued site parameters $\boldsymbol \alpha$ for all three choices of $\sigma \in \{-1,0,1\}$.

Moreover, while $\cond_L$ and $\cond_R$ shall be interpreted as conductances between the boundaries and the associated bulk sites, the parameters $\alpha_L > 0$ and $\alpha_R > 0$ are the boundary analogues of the bulk site parameters $\boldsymbol \alpha$. The parameters $\scale_L$ and $\scale_R$ are responsible for the scaling of the reservoirs' densities $\rho_L$ and $\rho_R$, i.e.\ 
\begin{align}\label{eq:density_scale_alpha_res}
\rho_L = \alpha_L\scale_L\quad \text{and}\quad \rho_R = \alpha_R \scale_R\ ,
\end{align}
and, for this reason, we refer to them as \emph{scale parameters}. In particular, while in general we only require that
\begin{equation*}
\scale_L\, ,\,\scale_R \in [0,\infty)\ ,
\end{equation*} for the case of the exclusion process ($\sigma = -1$), we need to further impose
\begin{equation*}
\scale_L\, ,\, \scale_R \in [0,1]
\end{equation*}
to prevent the rates in \eqref{eq:L_L} and \eqref{eq:L_R} to become negative.

\begin{remark}[\textsc{notational comparison with \cite{carinci_duality_2013-1}}] If we choose $\cond_{\{x,y\}}=\1_{\{|x-y|=1\}}$ and
	\begin{equation}
	\alpha_x=	\begin{cases}
	2j &\text{if }\sigma=-1\ \text{for some}\  2j\in \N\\
	1& \text{if }\sigma=0\\
	2k&\text{if }\sigma=1\  \text{for some}\  k> 0\ ,
	\end{cases}
	\end{equation}
	for all $x, y \in V$,
	we recover exactly the same bulk dynamics of the models studied in \cite{carinci_duality_2013-1}. For  what concerns the reservoir dynamics, the authors of \cite{carinci_duality_2013-1} employ the following notation (see e.g.\ \cite[Figs.\ 1--2]{carinci_duality_2013-1})
	\begin{align*}
	\begin{split}
	\alpha :=&\ \alpha_L \scale_L\\
	\gamma :=&\ \alpha_R  \scale_R
	\end{split}
	\begin{split}
	\beta :=&\ \alpha_R +\sigma \alpha_R \scale_R\\
	\delta :=&\ \alpha_L + \sigma \alpha_L \scale_L\ .
	\end{split}
	\end{align*}
	However, we believe that the parametrization of the bulk-boundary interaction through $\alpha_L$, $\alpha_R$, $\scale_L$ and $\scale_R$ yields more transparent formulas as, for instance, for the  duality functions in presence of disorder.
\end{remark}

\begin{remark}[\textsc{more general reservoirs geometries}]
	We  emphasize that our results may be stated for boundary driven particle systems with the same bulk dynamics -- as described by the generator $\mathcal L$ -- and a more general boundary part of the dynamics, in which  creation and annihilation of particles due to the reservoir interaction occur at more than two bulk sites. More precisely, the results stated in this section and  Sections \ref{section:measures} and \ref{section:orthogonal_duality} below -- namely, the duality relations and the correlation inequalities -- naturally extend  if $\mathcal L^{L,R}$ in \eqref{eq:generator_full},\eqref{eq:generator_boundary} is replaced by
	\begin{align*}
	\mathcal L^\res f(\eta)= \sum_{x \in V} \cond_x^\res \mathcal L_x^\res f(\eta)\ ,
	\end{align*}
	with
	\begin{align*}
	\mathcal L_x^\res f(\eta):=&\ \eta(x) \left(\alpha^\res_x+\sigma \alpha^\res_x \scale_x^\res\right)\left(f(\eta^{x,-})-f(\eta) \right)\\
	+&\ \alpha^\res_x \scale_x^\res\left(\alpha_x+\sigma \eta(x) \right)\left(f(\eta^{x,+})-f(\eta) \right)\ ,
	\end{align*}
	for some set of non-negative parameters $\boldsymbol \alpha^\res=\{\alpha^\res_x: x\in V\}$, $\boldsymbol \scale^\res=\{\scale^\res_x: x \in V\}$ and $\boldsymbol \cond^\res=\{\cond^\res_x: x \in V\}$. Also the results in Sections \ref{section:n-point} and \ref{section:exponential_generating_functions} below extend to this more general boundary dynamics as long as the scale parameters $\boldsymbol \scale^\res=\{\scale^\res_x: x \in V\}$ take at most two values, say $\scale_L$ and $\scale_R$. 
\end{remark}
\subsection{Duality}
In this section, for each one of the particle systems presented in the section above, we derive two types of duality relations with a particle system in contact with purely absorbing boundaries.  In particular, by duality relation for the particle system $\{\eta_t: t \geq 0\}$ on $\mathcal X$, we mean  that there exist a \emph{dual} particle system $\{\xi_t: t \geq 0 \}$ on $\widehat {\mathcal X}$  and a measurable function $D: \widehat{\mathcal X} \times \mathcal X \to \R$ -- referred to as \emph{duality function} -- for which the following relation holds: for all configurations $\eta \in \mathcal X$, $\xi \in \widehat{\mathcal X}$ and times $t\geq 0$, we have
\begin{align}\label{eq:duality_relation_expectations}
\widehat \E_\xi\left[D(\xi_t,\eta) \right] = \E_\eta\left[D(\xi,\eta_t) \right]\ ,
\end{align}
where $\widehat \E_\xi$, resp.\ $\E_\eta$, denotes expectation w.r.t.\ the law $\widehat \Pr_\xi$ of $\{\xi_t: t \geq 0 \}$ with initial condition $\xi_0=\xi$, resp.\ the law $\Pr_\eta$ of 	$\{\eta_t: t\geq 0\}$ with initial condition $\eta_0=\eta$. More in general, for a given probability measure $\mu$ on $\mathcal X$, $\E_\mu$ denotes the expectation w.r.t.\ the law $\P_\mu$ of $\{\eta_t: t\ge 0 \}$ initially distributed according to $\mu$. Notice that, with a slight abuse of notation, when we  write $\E_\mu\left[D(\xi,\eta) \right]$ we mean $\sum_{\eta \in \mathcal X} D(\xi,\eta)\, \mu(\eta)$.

If $\widehat{\mathcal L}$ and $\mathcal L$ denote the infinitesimal generators associated to the processes $\{\xi_t: t\geq 0\}$ and $\{\eta_t: t \geq 0\}$ respectively, the duality relation \eqref{eq:duality_relation_expectations} is equivalent to the following relation: for all configurations $\eta \in \mathcal X$ and $\xi \in \widehat{\mathcal X}$, we have
\begin{align}\label{eq:duality_relation_generators}
\widehat{\mathcal L}_\eft D(\xi,\eta) = \mathcal L_\ight D(\xi,\eta)\ ,
\end{align}
where the subscript \textquotedblleft$\eft$\textquotedblright, resp.\ \textquotedblleft$\ight$\textquotedblright, indicates that the generator acts as an operator on the function $D(\cdot,\cdot)$, viewed as a function of the left, resp.\ right, variables. More precisely,
\begin{align*}
\widehat{\mathcal L}_\eft D(\xi,\eta) = \widehat{\mathcal L} D(\cdot,\eta)(\xi)\quad \text{and}\quad\mathcal L_\ight D(\xi,\eta)= \mathcal L D(\xi,\cdot)(\eta)\ .	
\end{align*}

In what follows, first we present the dual particle systems and, then, the   duality relations. More specifically, we study in Sections \ref{section:classical_duality} and \ref{section:orthogonal_duality} below, duality relations with  two types of duality functions, which we call, respectively, \textquotedblleft classical\textquotedblright\ and \textquotedblleft orthogonal\textquotedblright\ for reasons that will be explained below.

\subsubsection{Dual particle system with purely absorbing boundaries}\label{section:classical_duality}
For each choice of $\sigma \in \{-1,0,1\}$, we define a particle system  with purely absorbing reservoirs,  which we prove to be dual (see Propositions \ref{proposition:classical_duality} and \ref{proposition:orthogonal_duality} below) to the corresponding  system in contact with reservoirs of Section \ref{section:particle_systems}. For such systems, particles hop on $V \cup \{L,R\}$  following the same bulk dynamics as the particle systems of Section \ref{section:particle_systems} but having $\{L,R\}$ as absorbing sites. More in detail,  $\{\xi_t: t \geq 0\}$ denotes such particle systems having
\begin{equation}
\widehat {\mathcal X}= \mathcal X \times \N_0^{\{L,R\}}
\end{equation} as configuration space and infinitesimal generator $\widehat {\mathcal L}$ given by
\begin{align}\label{eq:generator_full_dual}
\widehat {\mathcal L} f(\xi) = \widehat {\mathcal L}^\bulk f(\xi) + \widehat {\mathcal L}^\bdd f(\xi)\ ,
\end{align}
where, for all bounded functions $f : \widehat {\mathcal X} \to \R$,
\begin{align*}
\widehat {\mathcal L}^\bulk f(\xi) =&\ \sum_{x \sim y} \cond_{\{x,y\}}\,  \widehat {\mathcal L}_{\{x,y\}}f(\xi)\\
=&\ \sum_{x \sim y} \cond_{\{x,y\}} \left\{\begin{array}{c}
\xi(x)\, (\alpha_y+\sigma \xi(y)) \left(f(\xi^{x,y})-f(\xi)\right)\\[.15cm]
+\,\xi(y)\, (\alpha_x+\sigma \xi(x)) \left(f(\xi^{y,x})-f(\xi)\right)
\end{array} \right\}\ ,
\end{align*}
and
\begin{align*}
\widehat{\mathcal L}^\bdd f(\xi) =&\ \cond_L\, \widehat {\mathcal L}_L f(\xi) + \cond_R\, \widehat {\mathcal L}_R f(\xi)\\
=&\ \cond_L\, \alpha_L\, \xi(1) \left(f(\xi^{1,L})-f(\xi)\right)\,  +\, \cond_R\, \alpha_R\, \xi(N) \left(f(\xi^{N,R})-f(\xi)\right) \ ,
\end{align*}
with, for all $x, y \in V \cup \{L,R \}$,  $\xi^{x,y} = \xi-\delta_x+\delta_y  \in \widehat {\mathcal X}$.

For all configurations $\xi \in \widehat{\mathcal X}$, let $|\xi|$ denote the total number of particles of the configuration $\xi$, i.e.\
\begin{align}\label{eq:total_number_of_particles}
|\xi| := \xi(L)+\xi(R) +\sum_{x \in V}  \xi(x)\ .
\end{align}
Once the  total number of particles is fixed, due to the conservation of particles under the dynamics, the  assumption of connectedness of the graph $(V,\sim)$ (see Section \ref{section:particle_systems}) and the positivity of $\cond_L$ and $\cond_R$, the particle system $\{\xi_t: t \geq 0 \}$ is irreducible on
\begin{align*}
\widehat{\mathcal X}_n := \left\{\xi \in \widehat{\mathcal X} : |\xi|= n \right\}
\end{align*} whenever $n = |\xi_0|$ and admits a unique stationary measure fully supported on configurations
\begin{align*}
\left\{\xi \in \widehat{\mathcal X}_n : \xi(x) = 0\ \text{for all}\ x \in V \right\}\ ,
\end{align*}
i.e.\ all particles will get eventually  absorbed  in the sites	 $\{L,R \}$. Furthermore, the evolution of the particle systems $\{\xi_t: t \geq 0\}$ does \emph{not} depend on  $\scale_L$ and $\scale_R$, but only on the following set of parameters:
\begin{equation}\label{eq:environment1}
\boldsymbol \cond = \{\cond_{\{x,y\}}: x, y \in V \}\ ,
\end{equation}
\begin{equation}\label{eq:environment2} \boldsymbol \alpha =\ \{\alpha_x: x \in V \}\qquad \text{and}\qquad \left\{\cond_L, \cond_R, \alpha_L, \alpha_R\right\}\ .
\end{equation}
For this reason, in the sequel we will refer to $V \cup \{L,R\}$ endowed with the above parameters as the \emph{underlying geometry} of our particle systems.

\subsubsection{Classical dualities}
In this section, we generalize to the disordered setting   the duality relations already appearing in e.g.\  \cite{carinci_duality_2013-1}. In particular, these duality functions are in  factorized  -- jointly in the original and dual configuration variables -- form over all sites, i.e., for all $\eta \in \mathcal X$ and $\xi \in \widehat{\mathcal X}$,
\begin{align}\label{eq:duality_factorized}
D(\xi,\eta) = d_L(\xi(L)) \times \left(\prod_{x \in V} d_x(\xi(x),\eta(x))\right) \times d_R(\xi(R))\ ,
\end{align}
with the factors $\{d_x(\cdot,\cdot): x \in V\} \cup \{d_L(\cdot), d_R(\cdot) \}$  named \emph{single-site duality functions}. Moreover, we refer to them as \textquotedblleft classical\textquotedblright\ because the duality functions  consist in   weighted factorial moments of the occupation variables of the configuration $\eta$ generalizing to $\IRW$ and $\SIP$  the 	 renown duality relations for the symmetric simple exclusion process, see e.g.\ \cite[Theorem 1.1, p.\ 363]{liggett_interacting_2005-1}.	

The precise form of these classical duality functions is contained in the following proposition. The proof of this duality relation boils down to  directly check identity \eqref{eq:duality_relation_generators} and we omit it  being it a straightforward rewriting  of the proof of \cite[Theorem 4.1]{carinci_duality_2013-1}. We remark that in \eqref{eq:single_site_classical_LR} below and in the rest of the paper, we adopt the conventions $0^0:=1$,  $\frac{\Gamma(v+\ell) }{\Gamma(v)}:=v(v+1)\cdots(v+\ell-1)$ for $v\ge 0$ and $\ell\in \N_0$, while $$\frac{\Gamma(v+\ell) }{\Gamma(v)}:=\begin{cases}1\ &\text{if }\ell=0\\
v(v+1)\cdots(v+\ell-1)\ & \text{if }\ell\in\{1,...,\abs{v}\} \\
0\ &\text{otherwise ,}
\end{cases}$$
for $v\in\Z\cap(-\infty,0)$ and $\ell\in \N_0$. 
\begin{proposition}[\textsc{classical duality functions}]\label{proposition:classical_duality}	For each choice of $\sigma \in \{-1,0,1\}$, let $\mathcal L$ and $\widehat{\mathcal L}$ be the infinitesimal generators given in \eqref{eq:generator_full} and \eqref{eq:generator_full_dual}, respectively, associated to the particle systems $\{\eta_t: t \geq 0 \}$ and $\{\xi_t: t \geq 0\}$. Then the duality relations in \eqref{eq:duality_relation_expectations} and \eqref{eq:duality_relation_generators} hold with the   duality function $D^\cl : \widehat{\mathcal X} \times \mathcal X \to \R$ defined as follows:  for all configurations $\eta \in \mathcal X$ and $\xi \in \widehat{\mathcal X}$,
	\begin{align*}
	D^\cl(\xi,\eta) = d^\cl_L(\xi(L)) \times \left(\prod_{x \in V} d^\cl_x(\xi(x),\eta(x))\right) \times d^\cl_R(\xi(R))\ ,
	\end{align*} where, for all $x \in V$ and $k, n \in \N_0$,
	\begin{align}\label{eq:single-site_classical}
	d^\cl_x(k,n) = \frac{n!}{(n-k)!}\frac{1}{w_x(k)}\, \1_{\{k \leq n\}}
	\end{align}
	and
	\begin{align}\label{eq:single_site_classical_LR}
	d^\cl_L(k) = (\scale_L)^k\qquad \text{and}\qquad d^\cl_R(k) = (\scale_R)^k\ ,
	\end{align}
	where
	\begin{align}\label{eq:wx}
	w_x(k)\ =\ \begin{dcases}
	\frac{\alpha_x!}{(\alpha_x-k)!}\1_{\{k \leq \alpha_x\}}&\text{if\ } \sigma = -1\\
	\alpha_x^k &\text{if } \sigma = 0\\
	\frac{\Gamma(\alpha_x+k)}{\Gamma(\alpha_x)} &\text{if } \sigma = 1\ .
	\end{dcases}
	\end{align}
\end{proposition}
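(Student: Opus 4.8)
The plan is to verify the infinitesimal form \eqref{eq:duality_relation_generators} of the duality, which for these finite-range dynamics is equivalent to \eqref{eq:duality_relation_expectations}: for $\SEP$ the configuration space is finite, so the equivalence is immediate; for $\IRW$ and $\SIP$ one uses that, the total number of dual particles being conserved, the dual process lives on a finite set $\widehat{\mathcal X}_n$, while on the original side a standard truncation and martingale argument together with non-explosiveness of the dynamics lets one integrate the Kolmogorov equations and pass between the two forms. Granting this, I would decompose $\mathcal L=\mathcal L^{\bulk}+\mathcal L^{\bdd}$ and $\widehat{\mathcal L}=\widehat{\mathcal L}^{\bulk}+\widehat{\mathcal L}^{\bdd}$ and, by linearity, prove \eqref{eq:duality_relation_generators} separately for (i) each bulk bond $\{x,y\}$, namely $\widehat{\mathcal L}_{\{x,y\}}$ acting on the left variable equals $\mathcal L_{\{x,y\}}$ acting on the right variable of $D^{\cl}$, and (ii) each reservoir, i.e.\ the analogous identity for $\widehat{\mathcal L}_L,\mathcal L_L$ and for $\widehat{\mathcal L}_R,\mathcal L_R$. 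In both cases the point is that $D^{\cl}$ is a product over sites, as in \eqref{eq:duality_factorized}, while each single-bond operator changes only the two coordinates $x,y$ and each reservoir operator only the coordinates $1,L$ (resp.\ $N,R$); after cancelling the common untouched factors one is left with a finite identity in a handful of integer variables.

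Step (i) is exactly the bulk self-duality of the disordered $\SEP$/$\IRW$/$\SIP$: the site disorder enters the single-bond rates only through the constants $\alpha_x,\alpha_y$, so the verification is the computation of \cite[Theorem~4.1]{carinci_duality_2013-1} carried out verbatim with $\alpha$ replaced by $\alpha_x$ and $\alpha_y$ at the respective sites. The algebraic input is the pair of one-site relations satisfied by $d^{\cl}_x(k,n)=\tfrac{n!}{(n-k)!}\tfrac{1}{w_x(k)}\1_{\{k\le n\}}$, namely $n\,d^{\cl}_x(k,n-1)=(n-k)\,d^{\cl}_x(k,n)$ and, uniformly in $\sigma\in\{-1,0,1\}$, $w_x(k)/w_x(k-1)=\alpha_x+\sigma(k-1)$; using these one rewrites both sides as the same linear combination of products $d^{\cl}_x(\,\cdot\,,\eta(x))\,d^{\cl}_y(\,\cdot\,,\eta(y))$ and matches coefficients, the conventions in the statement for $\Gamma(v+\ell)/\Gamma(v)$ together with the indicators $\1_{\{k\le n\}}$ guaranteeing that, in the $\SEP$ case, a forbidden move (onto a full site or out of an empty one) makes both sides vanish at once.

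Step (ii) is the genuinely new ingredient. Fixing the left reservoir and writing $k=\xi(1)$ and $m=\xi(L)$, after dividing by $\cond_L$ and cancelling the factors $\prod_{x\in V,\,x\ne 1}d^{\cl}_x(\xi(x),\eta(x))\,d^{\cl}_R(\xi(R))$, identity \eqref{eq:duality_relation_generators} for the left reservoir becomes the one-site identity, valid for every value $n=\eta(1)$,
\begin{align*}
\alpha_L\,k\left[(\scale_L)^{m+1}\,d^{\cl}_1(k-1,n)-(\scale_L)^{m}\,d^{\cl}_1(k,n)\right]\;=\;(\scale_L)^{m}\,\big(\mathcal L_L\,d^{\cl}_1(k,\cdot)\big)(n)\ ,
\end{align*}
with $\mathcal L_L$ as in \eqref{eq:L_L}. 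Using $d^{\cl}_L(m)=(\scale_L)^{m}$, dividing by $(\scale_L)^m$ (the case $\scale_L=0$ being trivial), and invoking the two one-site relations above at $x=1$, this collapses to the elementary identity
\begin{align*}
k\,d^{\cl}_1(k-1,n)\;=\;-\sigma\,k\,d^{\cl}_1(k,n)+(\alpha_1+\sigma n)\left(d^{\cl}_1(k,n+1)-d^{\cl}_1(k,n)\right)\ ,
\end{align*}
which is checked directly; the right reservoir is symmetric. Conceptually this is no accident: a reservoir at scale parameter $\scale_L$ behaves like a bulk bond joining site $1$ to an extra site frozen at the reversible product law $\mu_{\scale_L}$, and $d^{\cl}_L(k)=(\scale_L)^{k}$ is precisely the bulk duality function $d^{\cl}(k,\cdot)$ averaged against $\mu_{\scale_L}$ (Binomial, Poisson, resp.\ Negative-Binomial, according to $\sigma$), so (ii) is obtained from (i) by taking that expectation.

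I expect the main obstacle to be bookkeeping rather than conceptual depth: carrying the three values of $\sigma$ and the site-dependent weights $w_x$ through the same computation, and — above all — making sure every identity remains valid on the boundary of the configuration space in the $\SEP$ case, where several terms that are formally present are actually annihilated by the indicators $\1_{\{k\le n\}}$, by the vanishing of the relevant rates at full or empty sites, and by the conventions adopted for $\Gamma(v+\ell)/\Gamma(v)$ with $v$ a negative integer. This is exactly the step at which one must check that the dual dynamics (which on the bulk sites carries the same occupancy ceilings $\alpha_x$ and none at the absorbing sites $L,R$) and the original boundary-driven dynamics are matched, and it is why these conventions are built into the statement.
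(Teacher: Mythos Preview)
Your approach is correct and coincides with the paper's: the authors omit the proof, stating only that it ``boils down to directly check identity \eqref{eq:duality_relation_generators}'' and is ``a straightforward rewriting of the proof of \cite[Theorem 4.1]{carinci_duality_2013-1}''. Your bond-by-bond decomposition, appeal to \cite{carinci_duality_2013-1} for the bulk, and explicit one-site verification at the reservoirs are precisely the intended argument, carried out in more detail than the paper itself provides.
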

\section{Equilibrium and non-equilibrium stationary measures}\label{section:measures}

The long run behavior of  the boundary driven particle systems of Section \ref{section:particle_systems}, encoded in their stationary measures, is explicitly known when the particle systems are \emph{not} in contact with the reservoirs.
Indeed, if $\cond_L = \cond_R = 0$, the particle systems $\{\eta_t: t\geq 0 \}$ admit a one-parameter family of stationary -- actually reversible -- product measures
\begin{equation}\label{eq:muscale}
\{\mu_\scale = \otimes_{x \in V}\, \nu_{x,\scale} : \scale \in \Scale \}
\end{equation} with
$\Scale = [0,1]$ if $\sigma = -1$ ($\SEP$) and $\Scale = [0,\infty)$ if $\sigma = 0$ ($\IRW$) and $\sigma = 1$ ($\SIP$) and marginals given, for all $x \in V$, by
\begin{align}\label{eq:marginals}
\nu_{x,\scale} \sim \begin{dcases}  \Bin(\alpha_x,\scale) &\text{if } \sigma = -1\\[.15cm]
\Poi(\alpha_x \scale) &\text{if } \sigma = 0\\[.15cm]
\NegBin(\alpha_x, \tfrac{\scale}{1+\scale}) &\text{if } \sigma = 1\ .
\end{dcases}
\end{align}
More concretely, for all $n \in \N_0$,
\begin{align}\label{eq:explicitnu}
\nu_{x,\scale}(n)\ =\ \frac{w_x(n)}{z_{x,\scale}}\, \frac{\left(\frac{\scale}{1+\sigma \scale} \right)^n}{n!}\ ,
\end{align}
with the functions $\{w_x: x \in V\}$ as given in \eqref{eq:wx} and
\begin{align}\label{eq:z}
z_{x,\scale}\ =\ \begin{dcases}
(1-\scale)^{-\alpha_x}&\text{if } \sigma = -1\\
e^{\alpha_x \scale} &\text{if } \sigma = 0\\
(1+\scale)^{\alpha_x} &\text{if } \sigma = 1\ ,
\end{dcases}
\end{align}
where, for $\sigma =-1$, we set $\nu_{x,1}(n):=\1_{\{n=\alpha_x\}}$.
Reversibility of these product measures for the dynamics induced by $\mathcal L$ in \eqref{eq:generator_full} follows by a standard detailed balance computation  (see e.g.\  \cite{carinci_duality_2013-1} for an analogous statement with site-independent parameters $\boldsymbol \alpha$). We note that, in analogy with \eqref{eq:density_scale_alpha_res}, the parameterization of these product measures and corresponding marginals is chosen in such a way that the density of particles
\begin{equation}
\rho_x:= \E_{\mu_\scale}[\eta(x)]
\end{equation}
at site $x \in V$ w.r.t.\ $\mu_\scale$ is given by the product of $\alpha_x$ and $\scale$, i.e.
\begin{equation}
\rho_x= \alpha_x \scale\ ,\quad x \in V\ .
\end{equation}

\subsection{Equilibrium stationary measure}
In presence of interaction with only one of the two reservoirs, e.g.\ $\cond_L > 0$ and $\cond_R = 0$ and with scale parameters given by $\scale_L$ and $\scale_R$, respectively, the same detailed balance computation shows that the systems have  $\mu_\scale$ (see \eqref{eq:muscale})  with $\scale = \scale_L$  as reversible product measures. 	The stationary measures remain the same as long as the systems are in contact with both reservoirs, i.e.\ $\cond_L, \cond_R > 0$, and the two reservoirs are given equal scale parameters $\scale_L= \scale_R \in \Scale$. We refer to such stationary measures as \emph{equilibrium stationary measures}.

\subsection{Non-equilibrium stationary measures}\label{section:non-equilibrium}
As for \emph{non-equilibrium stationary measures}, i.e.\ the  stationary measures of the particle systems when $\cond_L, \cond_R > 0$ and $\scale_L \neq \scale_R$, none of the measures $\{\mu_\scale = \otimes_{x \in V}\, \nu_{x,\scale} : \scale \in \Scale \}$ above is stationary. However,  for each of the particle systems,  the  non-equilibrium stationary measure
exists, is unique and we denote it by $\mu_{\scale_L,\scale_R}.$ Moreover, while for the case of independent random walkers $\mu_{\scale_L,\scale_R}$ is in product form, for the case of exclusion and inclusion particle systems in non-equilibrium $\mu_{\scale_L,\scale_R}$ is non-product and has non-zero two-point correlations. This is the content of Theorem \ref{theorem:existence_uniqueness} below. In particular, the result on two-point correlations (item \ref{it:sepsip}) will be complemented with the study of the signs of such correlations in Theorem \ref{theorem:sign_two_point} and Lemma \ref{lemma:sign_two_point} below. We recall that, for the special case of the exclusion process with $\boldsymbol \alpha = \{\alpha_x : x \in V \}$ satisfying $\alpha_x = 1$ for all $x \in V$ and with nearest-neighbor unitary conductances, i.e.\
\begin{align*}
\cond_{\{x,y\}}= \1_{\{|x-y|=1\}}\ ,\quad x, y \in V\ ,
\end{align*}
the unique non-product non-equilibrium stationary measure $\mu_{\scale_L,\scale_R}$ has been characterized in terms of a matrix formulation (see e.g.\  \cite{derrida_exact_1993-1} and \cite[Part III. Section 3]{liggett_stochastic_1999}). Goal of Section \ref{section:n-point} below is to provide a partial characterization of the non-equilibrium stationary measure of these systems by expressing suitably centered factorial moments -- related to the orthogonal duality functions of Section \ref{section:orthogonal_duality} below -- in terms of the product of a suitable power of $\left(\scale_L-\scale_R \right)$ and a coefficient which does not depend on neither $\scale_L$ nor $\scale_R$.

In what follows, for all $x \in V$, we introduce the non-equilibrium stationary profile of the classical duality functions:
\begin{align}\label{eq:scalex}
\bar \scale_x := \E_{\mu_{\scale_L,\scale_R}}\left[\frac{\eta(x)}{\alpha_x} \right] =\E_{\mu_{\scale_L,\scale_R}}\left[D^\cl(\delta_x,\eta) \right]\	  .
\end{align}
We recall that $\widehat \Pr_\xi$ denotes the law of the dual particle system started from the deterministic configuration $\xi \in \widehat{\mathcal X}$. Then, by stationarity and duality (Proposition \ref{proposition:classical_duality}), we obtain, for all $x \in V$,
\begin{equation}\label{eq:scalexxx}
\bar \scale_x =  \lim_{t\to \infty} \E_{\mu_{\scale_L,\scale_R}}\left[\frac{\eta_t(x)}{\alpha_x} \right] =  \widehat p_\infty(\delta_x,\delta_L)\,\scale_L + \widehat p_\infty(\delta_x,\delta_R)\, \scale_R 
= \scale_R + \widehat p_\infty(\delta_x,\delta_L)\, (\scale_L-\scale_R)\ ,
\end{equation}
where, for all $\xi, \xi' \in \widehat {\mathcal X}$, $\widehat p_\infty(\xi,\xi'):= \lim_{t \to \infty} \widehat p_t(\xi,\xi')$, with
\begin{align*}
\widehat p_t(\xi,\xi') := \widehat \Pr_\xi(\xi_t = \xi')\ .
\end{align*}
Equivalently, stationarity and duality imply that $\{\bar \scale_x: x \in V \}$ solves the following difference equations: for all $x \in V$,
\begin{align}\label{eq:harmonic_theta}
  \sum_{y \in V} \cond_{\{x,y\}}\, \alpha_y\, (\bar\scale_y-\bar\scale_x) +  \1_{\{x=1\}}\, \cond_L\, \alpha_L\, (\scale_L-\bar\scale_1) + \1_{\{x=N\}}\, \cond_R\, \alpha_R\, (\scale_R-\bar\scale_N) =  0\ .
\end{align}
Consequently, because of the connectedness of $(V,\sim)$,   if $\scale_L=\scale_R$, then $\bar \scale_x = \scale_L=\scale_R$ for all $x \in V$, while  $\scale_L\neq \scale_R$ implies that there  exist $x, y \in V$ such that $\bar \scale_x \neq \bar \scale_y$ and, moreover, that $\bar \scale_x > 0$ for all $x \in V$.

\begin{remark}[\textsc{non-equilibrium stationary profile for a chain}]
	In the particular instance of a chain, i.e.
	\begin{align*}
	\cond_{\{x,y\}}>0\quad \text{if and only if}\quad |x-y|=1\ ,
	\end{align*}the solution to the system \eqref{eq:harmonic_theta} is given by:
	\begin{align*}
	\bar \scale_x&= \scale_R + \widehat p_\infty(\delta_x,\delta_L)\left(\scale_L-\scale_R \right)\\
	&= \scale_R + \left( \frac{    \frac{1}{\cond_R \alpha_R \alpha_N}  + \sum_{y=x}^{N-1} \frac{1}{\cond_{\{y,y+1\}}\alpha_y \alpha_{y+1}} }{\frac{1}{\cond_L \alpha_L \alpha_1} + \left(\sum_{y=1}^{N-1} \frac{1}{\cond_{\{y,y+1\}}\alpha_y \alpha_{y+1}}\right) + \frac{1}{\cond_R \alpha_R \alpha_N}}  \right)\left(\scale_L-\scale_R \right)\ .
	\end{align*}
	If, additionally, the conductances and site parameters $\boldsymbol \cond$ and $\boldsymbol \alpha$ are constant, $\alpha_L= \alpha_R= \alpha_x$ and $\cond_L = \cond_R = \cond_{\{x,x+1	\}}$, the profile $x\mapsto \bar \scale_x$ is linear (cf.\  \cite[Eq.\ (4.24)]{carinci_duality_2013-1}):
	\begin{equation}\label{eq:linear_profile}
	\bar \scale_x= \scale_R + \left(1-\frac{x}{N+1}\right) \left(\scale_L-\scale_R \right)\ .
	\end{equation}
\end{remark}

Before stating the main result of this section, we introduce the following  definition.
\begin{definition}[\textsc{local equilibrium product measure}]\label{def: local equilibrium product measure}
	Given $\bar{\boldsymbol \scale}:=\{ \bar \scale_x:x \in V\}$  the stationary profile introduced in \eqref{eq:scalexxx}, we define the following product  measure
	\begin{align}\label{eq:muLR2}
	\mu_{\bar{\boldsymbol \scale}} := \otimes_{x \in V}\, \nu_{x,\bar \scale_x}\ ,
	\end{align}
	and  refer to it as the \emph{local equilibrium product measure}.
\end{definition}

\begin{theorem}\label{theorem:existence_uniqueness}
	For each choice of $\sigma \in \{-1,0,1\}$ and  provided that  $\cond_L\vee\cond_R > 0$,	 for all $\scale_L, \scale_R \in \Scale$ there exists a unique  stationary measure $\mu_{\scale_L,\scale_R}$ for the particle system $\{\eta_t: t \geq 0\}$.  Moreover,
	\begin{enumerate}[label={\normalfont(\alph*)},ref={\normalfont(\alph*)}]
		\item \label{it:irw}If $\sigma = 0$ $(\IRW)$, the stationary measure $\mu_{\scale_L,\scale_R}$ is in product form	 and is given by
		\begin{align}\label{eq:mustat=poisson}
		\mu_{\scale_L,\scale_R} = \mu_{\bar{\boldsymbol \scale}}\ .
		\end{align}

		\item \label{it:sepsip}If either $\sigma = -1$ $(\SEP)$ or $\sigma = 1$ $(\SIP)$ and, additionally, $\cond_L, \cond_R > 0$ and $\scale_L \neq \scale_R$, there exists $x, y \in V$ with $x \neq y$ for which
		\begin{align*}
		\E_{\mu_{\scale_L,\scale_R}}\left[\left(\frac{\eta(x)}{\alpha_x}-\bar \scale_x \right)\left(\frac{\eta(y)}{\alpha_y}-\bar \scale_y  \right) \right] \neq 0\ .
		\end{align*}
		As a consequence,   the unique non-equilibrium stationary measure $\mu_{\scale_L,\scale_R}$ is not in product form.
	\end{enumerate}
\end{theorem}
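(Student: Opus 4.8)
\emph{Existence and uniqueness.} For the exclusion process ($\sigma=-1$) the configuration space $\mathcal X=\prod_{x\in V}\{0,\dots,\alpha_x\}$ is finite, so a stationary measure exists automatically; uniqueness follows by checking that, as soon as $\cond_L\vee\cond_R>0$, the chain has a single closed communicating class. When $\scale_L,\scale_R\in(0,1)$ the chain is irreducible because particles can be created/annihilated at site $1$ (or $N$) and moved around the connected graph $(V,\sim)$; the boundary cases $\scale_L$ or $\scale_R\in\{0,1\}$ are handled one by one, in each of them identifying explicitly the unique recurrent configuration (e.g.\ the all-empty or the all-full one) or recurrent class. For $\IRW$ and $\SIP$ the state space $\N_0^V$ is non-compact and this is the part placed in Appendix \ref{appendix:existence_uniqueness}: the plan there is to use that the classical duality of Proposition \ref{proposition:classical_duality} with an $n$-particle dual — whose total number of particles is \emph{conserved} — yields uniform-in-time bounds of the form $\E_\eta\big[\prod_i\eta_t(x_i)\big]\le C\big(1+\sum_i\eta(x_i)^n\big)$; Cesàro averaging the time-evolved laws then produces a stationary measure with finite moments (tightness on $\N_0^V$, $V$ finite, needs only bounded first moments), and duality together with the moment bounds gives uniqueness.

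\emph{Part \ref{it:irw} ($\IRW$).} By the uniqueness just stated it suffices to verify that $\mu_{\bar{\boldsymbol\scale}}=\otimes_x\Poi(\alpha_x\bar\scale_x)$ is stationary for $\sigma=0$. The point is that for $\IRW$ the bulk dynamics moves particles independently, the reservoirs \emph{inject} particles at sites $1$ and $N$ at the \emph{constant} rates $\cond_L\alpha_L\scale_L\alpha_1$, $\cond_R\alpha_R\scale_R\alpha_N$, and \emph{remove} each particle present at site $1$ (resp.\ $N$) at rate $\cond_L\alpha_L$ (resp.\ $\cond_R\alpha_R$). Hence $\mathcal L$ preserves the class of products of Poisson measures, the parameters evolving by a linear ODE $\dot\lambda=A\lambda+b$ whose unique equilibrium $\lambda_x=\alpha_x\bar\scale_x$ is exactly the solution of \eqref{eq:harmonic_theta}; testing $\int\mathcal L f\,\mathrm d\mu_{\bar{\boldsymbol\scale}}=0$ on exponential (or finitely supported) functions $f$ is then a one-line computation.

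\emph{Part \ref{it:sepsip} ($\SEP$/$\SIP$), the key identity.} Fix $x\ne y$ in $V$. Since $d^\cl_x(1,n)=n/\alpha_x$ and $d^\cl_L(k)=\scale_L^k$, $d^\cl_R(k)=\scale_R^k$, applying \eqref{eq:duality_relation_expectations} with the two-particle dual configuration $\delta_x+\delta_y$, averaging over $\eta\sim\mu_{\scale_L,\scale_R}$, using stationarity, and letting $t\to\infty$ (the dual pair is absorbed into $\{L,R\}$; finiteness of the left-hand side for $\SIP$ comes from the $n=2$ case of the moment bound above) gives
\[
\E_{\mu_{\scale_L,\scale_R}}\!\Big[\tfrac{\eta(x)}{\alpha_x}\tfrac{\eta(y)}{\alpha_y}\Big]=\widehat p_\infty(\delta_x+\delta_y,2\delta_L)\,\scale_L^2+\widehat p_\infty(\delta_x+\delta_y,2\delta_R)\,\scale_R^2+\widehat p_\infty(\delta_x+\delta_y,\delta_L+\delta_R)\,\scale_L\scale_R .
\]
A direct computation shows that the quadratic terms in $\widehat{\mathcal L}\,\xi(z)$ cancel for every $\sigma$, so $t\mapsto\widehat\E_\xi[\xi_t(z)]$ solves a closed \emph{linear} equation and is additive in the initial particles; evaluated at $t=\infty$ this yields $2\widehat p_\infty(\delta_x+\delta_y,2\delta_L)+\widehat p_\infty(\delta_x+\delta_y,\delta_L+\delta_R)=a_x+a_y$ with $a_z:=\widehat p_\infty(\delta_z,\delta_L)$. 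Combining with \eqref{eq:scalexxx} (so $\bar\scale_z=\scale_R+a_z(\scale_L-\scale_R)$) and a short algebraic manipulation of the display, the two previous absorption probabilities drop out and one is left with
\[
\E_{\mu_{\scale_L,\scale_R}}\!\Big[\big(\tfrac{\eta(x)}{\alpha_x}-\bar\scale_x\big)\big(\tfrac{\eta(y)}{\alpha_y}-\bar\scale_y\big)\Big]=(\scale_L-\scale_R)^2\,\big(\widehat p_\infty(\delta_x+\delta_y,2\delta_L)-a_xa_y\big).
\]

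\emph{Part \ref{it:sepsip}, the contradiction.} Suppose, for contradiction, that the left-hand side vanishes for every $x\ne y$ in $V$. Since $\scale_L\ne\scale_R$, the identity forces $g(\delta_x+\delta_y)=a_xa_y$ for all pairs, where $g(\xi):=\widehat p_\infty(\xi,2\delta_L)$ on $\widehat{\mathcal X}_2$. Now $g$ is $\widehat{\mathcal L}$-harmonic on non-absorbing configurations, with $g(2\delta_L)=1$, $g(2\delta_R)=g(\delta_L+\delta_R)=0$; writing the harmonic equation at an admissible ``diagonal'' configuration $2\delta_w$ — which, together with absorption moves, only couples to off-diagonal configurations — forces $g(2\delta_w)=a_w^2$. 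Thus $g$ coincides on all of $\widehat{\mathcal X}_2$ with the factorized function $\widetilde g$ determined by $\widetilde g(\delta_p+\delta_q)=a_pa_q$, $\widetilde g(2\delta_p)=a_p^2$ (with $a_L:=1$, $a_R:=0$), so $\widetilde g$ is harmonic. However, for a bulk edge $x\sim y$ one computes, using harmonicity of $z\mapsto a_z$ on $V$ and the exact two-particle rates (for $\SEP$ with $\alpha_w=1$ the configuration $2\delta_w$ is simply forbidden and the corresponding rate is zero, so the same formula persists), that $\widehat{\mathcal L}\,\widetilde g(\delta_x+\delta_y)=\sigma\,\cond_{\{x,y\}}(a_x-a_y)^2$ — the inclusion/exclusion correction that does \emph{not} cancel. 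Harmonicity of $\widetilde g$ then gives $a_x=a_y$ on every edge, hence $a\equiv\mathrm{const}$ on the connected graph; harmonicity of $a$ at site $1$ (using $\cond_L,\alpha_L,\alpha_1>0$) forces $a\equiv1$, while at site $N$ it forces $a\equiv0$: contradiction. Therefore some pair $x\ne y$ has nonzero centered two-point correlation; and since any product stationary measure would have all such correlations equal to $0$, $\mu_{\scale_L,\scale_R}$ cannot be of product form. The main obstacles I expect are the non-compact existence/uniqueness for $\SIP$ (where both tightness and the identification of the limit rely on the moment control from the particle-conserving dual), and, within part \ref{it:sepsip}, the disorder-dependent bookkeeping of the two-particle dual rates needed to isolate exactly the term $\sigma\,\cond_{\{x,y\}}(a_x-a_y)^2$.
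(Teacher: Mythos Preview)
Your proof is correct and, for part~\ref{it:sepsip}, is essentially the paper's argument transported to the dual side. The paper assumes all centered two-point functions vanish and, testing stationarity of $\mu_{\scale_L,\scale_R}$ against $D^\cl(\delta_x+\delta_y,\cdot)$ (after first forcing $\bar\scale_x''=(\bar\scale_x)^2$ via $D^\cl(2\delta_x,\cdot)$), isolates the residual term $\sigma\,\cond_{\{x,y\}}(\bar\scale_x-\bar\scale_y)^2$; you instead work with the absorption probability $g(\xi)=\widehat p_\infty(\xi,2\delta_L)$, force $g(2\delta_w)=a_w^2$ from harmonicity at the diagonal, and then compute $\widehat{\mathcal L}\tilde g(\delta_x+\delta_y)=\sigma\,\cond_{\{x,y\}}(a_x-a_y)^2$. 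Since $\bar\scale_x-\bar\scale_y=(a_x-a_y)(\scale_L-\scale_R)$ and, by duality, $\E_{\mu_{\scale_L,\scale_R}}[D^\cl(\xi,\eta)]=\sum_\ell\scale_L^{\ell}\scale_R^{|\xi|-\ell}\widehat\Pr_\xi(\xi_\infty(L)=\ell)$, the two computations are the same identity read from opposite ends of the duality relation. Your route has the pleasant side effect of producing the explicit covariance formula $(\scale_L-\scale_R)^2\big(\widehat p_\infty(\delta_x+\delta_y,2\delta_L)-a_xa_y\big)$, which is precisely the $n=2$ instance of Theorem~\ref{theorem:n_correlation} (and your linearity-of-$\widehat{\mathcal L}\xi(z)$ step is the consistency relation later recorded in \eqref{eq: derivative of psi}); the paper's route avoids that intermediate formula at the cost of not displaying it. For part~\ref{it:irw} you give a structural argument (independent particles plus Poisson injection/removal preserve product Poisson laws, parameters flowing by a linear ODE with fixed point determined by \eqref{eq:harmonic_theta}), whereas the paper checks $\sum_\eta\mathcal L_\ight D^\cl(\xi,\eta)\mu_{\bar{\boldsymbol\scale}}(\eta)=0$ directly via \eqref{eq:fundamental_relation} and \eqref{eq:harmonic_theta}; both are short and your version is arguably more transparent about why the product form survives.
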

\begin{proof}
	The proof of existence and uniqueness of the stationary measure $\mu_{\scale_L,\scale_R}$ is trivial for the exclusion process, which is a finite state irreducible Markov chain. We postpone the proof for the case of independent random walkers and inclusion process to Appendix \ref{appendix:existence_uniqueness}. Although  this result is  standard, it does not appear, to the best of our knowledge, in the literature.

	For what concerns item \ref{it:irw} in which $\sigma =0$, let us compute, for all $\xi \in \widehat{\mathcal X}$,
	\begin{align*}
	\sum_{\eta \in \mathcal X} \mathcal L_\ight D^\cl(\xi,\eta)\, \mu_{\bar{\boldsymbol \scale}}(\eta)\ .
	\end{align*}
	By duality, the  following
	relation (cf.\ e.g.\ \cite{redig_factorized_2018})
	\begin{align}\label{eq:fundamental_relation}
	\sum_{n \in \N_0} d^\cl_x(k,n)\, \nu_{x,\bar \scale_x}(n) = (\bar \scale_x)^k\ , 
	\end{align}
which holds for all $x \in V$ and $k \in \N_0$ if $\sigma \in \{0,1\}$ while $k \in \{0,\ldots, \alpha_x\}$ if $\sigma = -1$,
  we obtain, for all $\xi \in \widehat{\mathcal X}$, 	
	\begin{align*}
	&	\sum_{\eta \in \mathcal X} \mathcal L_\ight D^\cl(\xi,\eta)\, \mu_{\bar{\boldsymbol \scale}}(\eta) = 	\sum_{\eta \in \mathcal X} \widehat {\mathcal L}_\eft D^\cl(\xi,\eta)\, \mu_{\bar{\boldsymbol \scale}}(\eta)\\
	&= \sum_{x \in V} \left(	\sum_{\eta \in \mathcal X} D^\cl(\xi-\delta_x,\eta)\, \mu_{\bar{\boldsymbol \scale}}(\eta)\right)  \xi(x)
	\left\{
	\begin{array}{l} \sum_{y \in V} \cond_{\{x,y\}}\, \alpha_y\, (\bar \scale_y-\bar \scale_x)\\[.15cm]
	+\1_{\{x=1\}}\, \cond_L\, \alpha_L\, (\scale_L-\bar \scale_1)\\[.15cm]
	+\1_{\{x=N\}}\, \cond_R\, \alpha_R\, (\scale_R-\bar\scale_N)
	\end{array}
	\right\}=0\ ,
	\end{align*}
where  the last identity follows from 	 \eqref{eq:harmonic_theta}.
	Because the products of Poisson distributions are completely characterized by their factorial moments $\{D^\cl(\xi,\cdot): \xi \in \widehat{\mathcal X} \}$, we get \eqref{eq:mustat=poisson}.
	
	\
	
	For item \ref{it:sepsip} in which $\sigma \neq 0$,  let us suppose by contradiction that all two-point correlations are zero, i.e.\   for all $x, y \in V$ with $x \neq y$,
	\begin{align}\label{eq:two_point_zero}
	\E_{\mu_{\scale_L,\scale_R}}\left[\frac{\eta(x)}{\alpha_x} \frac{\eta(y)}{\alpha_y} \right]= \E_{\mu_{\scale_L,\scale_R}}\left[D^\cl(\delta_x+\delta_y,\eta) \right]=\bar \scale_x \bar \scale_y\ .
	\end{align}
	If we use the following shortcut
	\begin{align*}
	\bar \scale_x'':= \E_{\mu_{\scale_L,\scale_R}}\left[D^\cl(2\delta_x,\eta) \right]\ ,
	\end{align*}
	by stationarity, duality and \eqref{eq:two_point_zero}, we obtain, for all $x \in V$,
	\begin{align*}
	&	\sum_{\eta \in \mathcal X} \mathcal L_\ight D^\cl(2\delta_x,\eta)\, \mu_{\scale_L,\scale_R}(\eta) = 	\sum_{\eta \in \mathcal X} \widehat{\mathcal L}_\eft D^\cl(2\delta_x,\eta)\, \mu_{\scale_L,\scale_R}(\eta)\\
	&=2 \sum_{y \in V} \cond_{\{x,y\}} \alpha_y (\bar \scale_x\, \bar\scale_y- \bar\scale_x'' )+ 2\left\{\1_{\{x=1\}}\cond_L\alpha_L (\scale_L \bar\scale_1-\bar \scale_1'' ) + \1_{\{x=N\}}\cond_R\alpha_R (\scale_R \bar\scale_N-\bar \scale_N'' )\right\} = 0\ .
	\end{align*}
	By adding and subtracting
	\begin{align*}
	&2 \left\{\sum_{y \in V} \cond_{\{x,y\}} \alpha_y (\bar \scale_x)^2+ \1_{\{x=1\}}\cond_L\alpha_L (\bar \scale_1)^2  + \1_{\{x=N\}}\cond_R\alpha_R(\bar \scale_N)^2 \right\}
	\end{align*}
	to the identity above and by relation \eqref{eq:harmonic_theta}, we get
	\begin{align*}
	\left((\bar \scale_x)^2-\bar \scale_x''\right) 2\left\{\sum_{y \in V} \cond_{\{x,y\}}\alpha_y+\1_{\{x=1\}}\cond_L\alpha_L +\1_{\{x=N\}}\cond_R \alpha_R \right\} = 0\ .
	\end{align*}
	Because the above identity holds for all $x \in V$ and by the positivity of the expression in curly brackets due to the connectedness of $(V,\sim)$, we get
	\begin{align}\label{eq:two_point_zero2}
	\bar \scale_x''= (\bar \scale_x)^2\ ,\quad \text{for all}\quad x\in V\ .
	\end{align}
	In view of \eqref{eq:two_point_zero}, \eqref{eq:two_point_zero2}, stationarity of $\mu_{\scale_L,\scale_R}$ and duality, we have
	\begin{align}\label{eq:second_moment}
	\nonumber
	&\sum_{\eta \in \mathcal X}\mathcal L_\ight D^\cl(\delta_x+\delta_y,\eta)\, \mu_{\scale_L,\scale_R}(\eta)= 	\sum_{\eta \in \mathcal X} \widehat {\mathcal L}_\eft D^\cl(\delta_x+\delta_y,\eta)\, \mu_{\scale_L,\scale_R}(\eta)\\
	\nonumber
	&=	\bar \scale_y
	\left\{\begin{array}{l}\sum_{z \in V} \cond_{\{x,z\}}\, \alpha_z\, (\bar \scale_z-\bar \scale_x)\\[.15cm]
	+\1_{\{x=1\}}\, \cond_L\, \alpha_L\, (\scale_L-\bar \scale_1)\\[.15cm]
	+\1_{\{x=N\}}\, \cond_R\, \alpha_R\, (\scale_R-\bar \scale_N)\end{array} \right\}
	+ \bar \scale_x \left\{\begin{array}{l}\sum_{z \in V} \cond_{\{y,z\}}\, \alpha_z\, (\bar \scale_z-\bar \scale_y)\\[.15cm]
	+\1_{\{y=1\}}\, \cond_L\, \alpha_L\, (\scale_L-\bar \scale_1)\\[.15cm]
	+\1_{\{y=N\}}\, \cond_R\, \alpha_R\, (\scale_R-\bar \scale_N)\end{array} \right\}
	+  \sigma \cond_{\{x,y\}} (\bar \scale_x-\bar \scale_y)^2\\
	&=	 \sigma \cond_{\{x,y\}} (\bar \scale_x-\bar \scale_y)^2\ .
	\end{align}
	Therefore, because $\sigma \in \{-1,1\}$, as a consequence of  the connectedness  of $(V,\sim)$, we have	
	\begin{align}\label{eq:zero2}
	&\sum_{x \sim y} \left(	\sum_{\eta \in \mathcal X} \mathcal L_\ight D^\cl(\delta_x+\delta_y,\eta)\, \mu_{\scale_L,\scale_R}(\eta) \right)	 = \sigma \sum_{x \sim y} \cond_{\{x,y\}}\,  (\bar \scale_x-\bar \scale_y)^2 = 0
	\end{align}
	if and only if
	\begin{align}\label{eq:zero1}
	\bar \scale_x = \bar \scale_y\ ,\quad \text{for all}\quad x, y \in V\ .
	\end{align}
	However, because $\scale_L \neq \scale_R$, the latter condition \eqref{eq:zero1} contradicts the claim below \eqref{eq:harmonic_theta} .

\end{proof}

\subsection{Two-point correlations in the non-equilibrium steady state}
In the following theorem we prove that as soon as the system has interaction, i.e.\
$\sigma\in \{-1, 1\}$, the local equilibrium product measure expectations of classical duality functions decrease (resp.\ increase) for exclusion (resp.\  inclusion) in the course of time. This implies,  in particular, negative (resp.\ positive) two-point correlations for exclusion (resp.\ inclusion) particle systems. This strengthens previous results on correlation inequalities in \cite{giardina_correlation_2010}, indeed here we obtain strict inequalities. The proof of this theorem is based on Lemma \ref{lemma:sign_two_point} below, which is of interest in itself because it provides an explicit expression of the l.h.s.\ in \eqref{eq:derivative_expectation}.

\begin{theorem}[\textsc{sign of two-point correlations}]\label{theorem:sign_two_point}
	If $\cond_L,  \cond_R > 0$  and  $\xi\in \widehat{\mathcal X}$ is  such that $\sum_{x\in V}\xi(x)\ge 2$, then, for all $\scale_L, \scale_R \in \Scale$ with $\scale_L \neq \scale_R$
	and $t>0$,
	\begin{align}\label{eq:derivative_expectation}
	&	\frac{\dd}{\dd t}	\E_{\mu_{\bar{\boldsymbol \scale}}}\left[D^\cl(\xi,\eta_t) \right]\
	\begin{dcases}
	<0 &\text{if } \sigma = -1\\
	> 0 &\text{if } \sigma = 1\ .
	\end{dcases}
	\end{align}
	As a consequence, for all $x, y \in V$ with $x \neq y$, 	
	\begin{align*}
	\E_{\mu_{\scale_L,\scale_R}}\left[\left(\frac{\eta(x)}{\alpha_{x}} - \bar \scale_{x}\right) \left(\frac{\eta(y)}{\alpha_{y}} - \bar \scale_{y}\right)\right]\  \begin{dcases}
	<0 &\text{if } \sigma = -1\\
	> 0 &\text{if } \sigma = 1\ .
	\end{dcases}
	\end{align*}
\end{theorem}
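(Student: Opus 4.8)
The plan is to establish the dynamical statement \eqref{eq:derivative_expectation} first and then deduce the two-point correlation inequality by integrating in time and invoking stationarity. The starting observation is that, by classical duality (Proposition \ref{proposition:classical_duality}) together with the fundamental relation \eqref{eq:fundamental_relation}, the local equilibrium product measure $\mu_{\bar{\boldsymbol\scale}}$ evaluates classical duality functions as $\E_{\mu_{\bar{\boldsymbol\scale}}}[D^\cl(\xi,\eta)] = (\bar\scale_L)^{\xi(L)}(\bar\scale_R)^{\xi(R)}\prod_{x\in V}(\bar\scale_x)^{\xi(x)}$, where I write $\bar\scale_L:=\scale_L$ and $\bar\scale_R:=\scale_R$ for uniformity. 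Hence $t\mapsto \E_{\mu_{\bar{\boldsymbol\scale}}}[D^\cl(\xi,\eta_t)]$ has, at $t=0$, derivative equal to $\sum_{\eta}\mathcal L_\ight D^\cl(\xi,\eta)\,\mu_{\bar{\boldsymbol\scale}}(\eta) = \sum_\eta \widehat{\mathcal L}_\eft D^\cl(\xi,\eta)\,\mu_{\bar{\boldsymbol\scale}}(\eta)$, and applying $\widehat{\mathcal L}$ to the product form in the dual variable produces exactly the kind of telescoping computation carried out in the proof of Theorem \ref{theorem:existence_uniqueness}: each single-bond term contributes a multiple of $\bar\scale$-values at sites other than the bond, times a bracket $\sum_y \cond_{\{x,y\}}\alpha_y(\bar\scale_y-\bar\scale_x)+(\text{boundary terms})$ which vanishes by \eqref{eq:harmonic_theta}, plus a genuinely quadratic correction term $\sigma\,\cond_{\{x,y\}}(\bar\scale_x-\bar\scale_y)^2$ coming from the coincidence contributions (the $\sigma\xi(y)$ and $\sigma\xi(x)$ pieces in the single-bond generator). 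This is precisely the phenomenon visible in \eqref{eq:second_moment}. I expect this to identify the $t=0$ derivative with $\sigma$ times a nonnegative combination of squares $\cond_{\{x,y\}}(\bar\scale_x-\bar\scale_y)^2$ weighted by the (nonnegative) dual configuration data, and this combination is strictly positive because $\scale_L\neq\scale_R$ forces $\bar\scale_x\neq\bar\scale_y$ for some adjacent pair — here the hypothesis $\sum_{x\in V}\xi(x)\geq 2$ enters to guarantee that at least one bond actually carries two particles so the quadratic term is not killed.

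For general $t>0$ rather than just $t=0$, the idea is to push the derivative inside using the Markov semigroup: writing $S_t$ for the semigroup of $\{\eta_t\}$ and $\widehat S_t$ for that of $\{\xi_t\}$, duality gives $\E_{\mu_{\bar{\boldsymbol\scale}}}[D^\cl(\xi,\eta_t)] = \sum_\eta (\widehat S_t D^\cl(\cdot,\eta))(\xi)\,\mu_{\bar{\boldsymbol\scale}}(\eta)$, so that $\frac{\dd}{\dd t}\E_{\mu_{\bar{\boldsymbol\scale}}}[D^\cl(\xi,\eta_t)] = \sum_{\xi'} \widehat p_t(\xi,\xi')\Big(\sum_\eta \widehat{\mathcal L}_\eft D^\cl(\xi',\eta)\,\mu_{\bar{\boldsymbol\scale}}(\eta)\Big)$. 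By the $t=0$ computation the inner sum equals $\sigma \sum_{x\sim y}\cond_{\{x,y\}}(\bar\scale_x-\bar\scale_y)^2\, c_{x,y}(\xi')$ for explicit nonnegative coefficients $c_{x,y}(\xi')$ (products of $\bar\scale$'s times occupation numbers), which is $\leq 0$ for $\sigma=-1$ and $\geq 0$ for $\sigma=1$ for every $\xi'$; moreover it is nonzero precisely when $\xi'$ has at least two particles in the bulk. Since the dual dynamics is irreducible on $\widehat{\mathcal X}_n$ before absorption and $\widehat p_t(\xi,\xi')>0$ for all $\xi'$ with $|\xi'|=|\xi|$ and $t>0$, and since the total number of bulk particles is nonincreasing but positive for a positive-length time interval, the weighted sum over $\xi'$ is strictly negative (resp.\ positive) for every fixed $t>0$. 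This yields \eqref{eq:derivative_expectation}; the precise bookkeeping of the coefficients $c_{x,y}(\xi')$ is routine and parallels the $n=2$ case already displayed.

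Finally, to obtain the stated correlation inequality I would specialize and integrate. Take $\xi=\delta_x+\delta_y$ with $x\neq y$ in $V$, so $\sum_{z\in V}\xi(z)=2$. On one hand, as $t\to\infty$ all dual particles are absorbed, so $\E_{\mu_{\bar{\boldsymbol\scale}}}[D^\cl(\delta_x+\delta_y,\eta_t)]\to \widehat p_\infty$-weighted boundary values, and on the other hand, because $\mu_{\scale_L,\scale_R}$ is the unique stationary measure (Theorem \ref{theorem:existence_uniqueness}), $\E_{\mu_{\bar{\boldsymbol\scale}}}[D^\cl(\delta_x+\delta_y,\eta_t)]\to \E_{\mu_{\scale_L,\scale_R}}[D^\cl(\delta_x+\delta_y,\eta)] = \E_{\mu_{\scale_L,\scale_R}}[\tfrac{\eta(x)}{\alpha_x}\tfrac{\eta(y)}{\alpha_y}]$. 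Using the fundamental relation at $t=0$ gives $\E_{\mu_{\bar{\boldsymbol\scale}}}[D^\cl(\delta_x+\delta_y,\eta)]=\bar\scale_x\bar\scale_y$, and the same limiting argument applied to $\xi=\delta_x$ shows $\E_{\mu_{\scale_L,\scale_R}}[\tfrac{\eta(x)}{\alpha_x}]=\bar\scale_x$ (consistent with \eqref{eq:scalex}–\eqref{eq:scalexxx}). Therefore
\begin{align*}
\E_{\mu_{\scale_L,\scale_R}}\!\left[\left(\tfrac{\eta(x)}{\alpha_x}-\bar\scale_x\right)\!\left(\tfrac{\eta(y)}{\alpha_y}-\bar\scale_y\right)\right] = \E_{\mu_{\scale_L,\scale_R}}[D^\cl(\delta_x+\delta_y,\eta)]-\bar\scale_x\bar\scale_y = \int_0^\infty \frac{\dd}{\dd t}\E_{\mu_{\bar{\boldsymbol\scale}}}[D^\cl(\delta_x+\delta_y,\eta_t)]\,\dd t\ ,
\end{align*}
and by \eqref{eq:derivative_expectation} the integrand is strictly negative for $\sigma=-1$ and strictly positive for $\sigma=1$ on all of $(0,\infty)$, with enough decay (exponential, by finiteness/irreducibility in the $\SEP$ case and by the explicit absorbing dynamics in general) to make the improper integral converge and inherit the strict sign. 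The main obstacle I anticipate is the second paragraph: carefully justifying that the generator identity for $\widehat{\mathcal L}_\eft D^\cl(\xi',\cdot)$ integrated against $\mu_{\bar{\boldsymbol\scale}}$ reduces, for \emph{every} $\xi'$ and not merely $\xi'=2\delta_x$ or $\delta_x+\delta_y$, to a manifestly signed quadratic form in the increments $\bar\scale_x-\bar\scale_y$, and that the "harmonic" cancellation via \eqref{eq:harmonic_theta} is complete; once that structural identity is in hand, positivity of $\widehat p_t$ and the sign of $\sigma$ do the rest.
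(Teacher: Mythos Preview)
Your proposal is correct and follows essentially the same approach as the paper. The paper packages your ``$t=0$ computation applied to every $\xi'$'' into Lemma \ref{lemma:sign_two_point}, which proves precisely the structural identity you anticipate: for any $\xi'$, $\sum_\eta \widehat{\mathcal L}_\eft D^\cl(\xi',\eta)\,\mu_{\bar{\boldsymbol\scale}}(\eta) = \sigma\sum_{x\sim y}\cond_{\{x,y\}}(\bar\scale_x-\bar\scale_y)^2\,\tfrac{\xi'(x)}{\bar\scale_x}\tfrac{\xi'(y)}{\bar\scale_y}\,H(\xi',\bar{\boldsymbol\scale})$, with the harmonic cancellation via \eqref{eq:harmonic_theta} carried out exactly as you outline; strict positivity then follows (as you argue) from connectedness and $\widehat\Pr_\xi(\xi_t(x)\xi_t(y)>0)>0$ for $t>0$, which the paper records in the Remark following the theorem.
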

\begin{proof}
	The local equilibrium product measures $\mu_{\bar{\boldsymbol \scale}}$ satisfy the hypothesis of  Lemma  \ref{lemma:sign_two_point} below (cf.\ \eqref{eq:fundamental_relation}). Then, by the claim after \eqref{eq:harmonic_theta} and the assumption $\scale_L \neq \scale_R$, 	 \eqref{eq:derivative_expectation} is recovered as a consequence of the first equality of  \eqref{eq:time_derivative} from the same lemma.
\end{proof}

\begin{lemma}\label{lemma:sign_two_point}\label{it:study_sign} For all $n \in \N$, let $\mu$ be a probability measure on $\mathcal X$ such that
	\begin{align}\label{eq:assumption_n_order}
	\E_{\mu}\left[D^\cl(\xi,\eta) \right] &= H(\xi,\bar {\boldsymbol \scale})
	\end{align}
	holds for all $\xi \in \widehat{\mathcal X}$ with $|\xi|\leq n$, where $\bar{\boldsymbol \scale} = \{\bar \scale_x: x \in V \}$ and, for all $\boldsymbol \scale = \{ \scale_x : x \in V\} \subset \Scale$,
	\begin{align*}
	H(\xi,\boldsymbol \scale) := (\scale_L)^{\xi(L)} \left( \prod_{x\in V} (\scale_x)^{\xi(x)}\right) (\scale_R)^{\xi(R)}\ .
	\end{align*}
	Then
	\begin{align}\label{eq:time_derivative}
	\nonumber
	\frac{\dd}{\dd t}\E_\mu\left[D^\cl(\xi,\eta_t) \right]
	&=\sigma \sum_{x \sim y} \cond_{\{x,y\}}\left( \bar \scale_y-\bar \scale_x\right)^2 \widehat \E_\xi\left[\frac{\xi_t(x)}{\bar \scale_x} \frac{\xi_t(y)}{\bar \scale_y}  \E_{\mu}\left[D^\cl(\xi_t,\eta) \right]\right]\\
	&=\sigma \sum_{x \sim y} \cond_{\{x,y\}}\, \widehat \E_\xi\left[\left( \bar \scale_y-\bar \scale_x\right)^2\partial^2_{\scale_x\scale_y} H(\xi_t,\bar{\boldsymbol \scale}) \right]
	\end{align}
	holds for all $\xi \in \mathcal X$ with $|\xi|\leq n$ and $t \geq 0$.
\end{lemma}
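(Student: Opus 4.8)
The plan is to differentiate the duality relation and then exploit that the profile $\bar{\boldsymbol\scale}$ is ``harmonic'' in the sense of \eqref{eq:harmonic_theta}. Set $G(\zeta):=\E_\mu[D^\cl(\zeta,\eta)]$ for $\zeta\in\widehat{\mathcal X}$, so that by hypothesis $G(\zeta)=H(\zeta,\bar{\boldsymbol\scale})$ whenever $|\zeta|\le n$. Since $D^\cl\ge 0$, integrating the duality relation of Proposition \ref{proposition:classical_duality} against $\mu$ and applying Tonelli gives $\E_\mu[D^\cl(\xi,\eta_t)]=\widehat\E_\xi[G(\xi_t)]$ for every $\xi\in\widehat{\mathcal X}$ and $t\ge 0$. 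The dual dynamics conserves the total number of particles, so for $|\xi|\le n$ the process $\{\xi_t\}$ remains in the \emph{finite} and invariant set $S:=\{\zeta:|\zeta|=|\xi|\}\subseteq\widehat{\mathcal X}_n$; hence $\{\xi_t\}$ is a finite-state continuous-time chain, $t\mapsto\widehat\E_\xi[G(\xi_t)]$ is smooth, and $\tfrac{\dd}{\dd t}\widehat\E_\xi[G(\xi_t)]=\widehat\E_\xi[(\widehat{\mathcal L}G)(\xi_t)]$. Because $G\equiv H(\cdot,\bar{\boldsymbol\scale})$ on the invariant set $S$, we have $(\widehat{\mathcal L}G)(\zeta)=(\widehat{\mathcal L}H(\cdot,\bar{\boldsymbol\scale}))(\zeta)$ for all $\zeta\in S$, so it remains to evaluate this last quantity.

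The core of the argument is this evaluation. A bulk move of a particle from $x$ to $y$ multiplies $H(\cdot,\bar{\boldsymbol\scale})$ by $\bar\scale_y/\bar\scale_x$, while the two boundary moves multiply it by $\scale_L/\bar\scale_1$ and $\scale_R/\bar\scale_N$, so that $\zeta(x)\big(H(\zeta^{x,y},\bar{\boldsymbol\scale})-H(\zeta,\bar{\boldsymbol\scale})\big)=(\bar\scale_y-\bar\scale_x)\tfrac{\zeta(x)}{\bar\scale_x}H(\zeta,\bar{\boldsymbol\scale})$, and analogously for the two boundary terms. Inserting this into $\widehat{\mathcal L}=\widehat{\mathcal L}^\bulk+\widehat{\mathcal L}^\bdd$ and separating, in the bulk single-bond generator, the factor $\alpha_y$ (resp.\ $\alpha_x$) from $\sigma\zeta(y)$ (resp.\ $\sigma\zeta(x)$), the terms carrying $\alpha_y,\alpha_x$, collected together with $\widehat{\mathcal L}^\bdd H(\cdot,\bar{\boldsymbol\scale})(\zeta)$, equal
\[
H(\zeta,\bar{\boldsymbol\scale})\sum_{x\in V}\frac{\zeta(x)}{\bar\scale_x}\left(\sum_{y\in V}\cond_{\{x,y\}}\alpha_y(\bar\scale_y-\bar\scale_x)+\1_{\{x=1\}}\cond_L\alpha_L(\scale_L-\bar\scale_x)+\1_{\{x=N\}}\cond_R\alpha_R(\scale_R-\bar\scale_x)\right),
\]
and the bracket vanishes for every $x\in V$ by \eqref{eq:harmonic_theta}. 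What remains is the part carrying $\sigma\zeta(y),\sigma\zeta(x)$, whose contribution from each unordered bond $\{x,y\}$ is
\[
\sigma\,\cond_{\{x,y\}}\,\zeta(x)\zeta(y)\left(\frac{\bar\scale_y-\bar\scale_x}{\bar\scale_x}+\frac{\bar\scale_x-\bar\scale_y}{\bar\scale_y}\right)H(\zeta,\bar{\boldsymbol\scale})=\sigma\,\cond_{\{x,y\}}\,\frac{(\bar\scale_y-\bar\scale_x)^2}{\bar\scale_x\bar\scale_y}\,\zeta(x)\zeta(y)\,H(\zeta,\bar{\boldsymbol\scale}).
\]
Summing over bonds gives $(\widehat{\mathcal L}H(\cdot,\bar{\boldsymbol\scale}))(\zeta)=\sigma\sum_{x\sim y}\cond_{\{x,y\}}(\bar\scale_y-\bar\scale_x)^2\tfrac{\zeta(x)\zeta(y)}{\bar\scale_x\bar\scale_y}H(\zeta,\bar{\boldsymbol\scale})$.

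To finish, substitute $\zeta=\xi_t$, which is legitimate since $|\xi_t|=|\xi|\le n$ and hence $H(\xi_t,\bar{\boldsymbol\scale})=G(\xi_t)=\E_\mu[D^\cl(\xi_t,\eta)]$; taking $\widehat\E_\xi$ and pulling the deterministic coefficients out of the sum yields the first identity in \eqref{eq:time_derivative}. The second identity then follows from the elementary computation $\partial^2_{\scale_x\scale_y}H(\zeta,\boldsymbol\scale)=\tfrac{\zeta(x)\zeta(y)}{\scale_x\scale_y}H(\zeta,\boldsymbol\scale)$, valid for $x\ne y$ (and $x\ne y$ on every bond since $\cond_{\{x,x\}}=0$), evaluated at $\boldsymbol\scale=\bar{\boldsymbol\scale}$.

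The main obstacle is the bookkeeping in the middle step: one must write the single-bond generator as a sum over ordered pairs consistently, recognize that the degree-one-in-$\zeta$ terms plus the reservoir terms form exactly $\zeta(x)/\bar\scale_x$ times the left-hand side of \eqref{eq:harmonic_theta} and hence cancel, and symmetrize the degree-two-in-$\zeta$ terms over each bond to produce the square $(\bar\scale_y-\bar\scale_x)^2$. A minor point is that the divisions by $\bar\scale_x$ presuppose $\bar\scale_x>0$; by \eqref{eq:scalexxx} the only way some $\bar\scale_x$ can vanish is $\scale_L=\scale_R=0$, in which case $\mu$ is the Dirac mass at the empty configuration, which is stationary, and both sides of \eqref{eq:time_derivative} vanish, so the claim is trivial. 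Alternatively, one may replace every occurrence of $\zeta(x)H(\zeta,\bar{\boldsymbol\scale})/\bar\scale_x$ by $\partial_{\scale_x}H(\zeta,\cdot)\big|_{\bar{\boldsymbol\scale}}$ and carry out the whole computation in terms of these partial derivatives, thereby avoiding the issue altogether.
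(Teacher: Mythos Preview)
Your proof is correct and follows essentially the same route as the paper: both use duality to convert the time derivative into $\widehat{\mathcal L}$ acting on the product function $H(\cdot,\bar{\boldsymbol\scale})$, then split the single-bond generator into the $\alpha_y$-part (which, together with the boundary terms, cancels by \eqref{eq:harmonic_theta}) and the $\sigma\zeta(y)$-part (which symmetrizes to the square $(\bar\scale_y-\bar\scale_x)^2$). Your presentation is slightly cleaner in that you package the integrated duality as $\E_\mu[D^\cl(\xi,\eta_t)]=\widehat\E_\xi[G(\xi_t)]$ and differentiate via Kolmogorov on the finite state space $\widehat{\mathcal X}_{|\xi|}$, whereas the paper writes the derivative as $\sum_\eta\mathcal L_\ight\E_\eta[D^\cl(\xi,\eta_t)]\mu(\eta)$ and then passes to $\widehat{\mathcal L}_\eft$ inside the dual expectation; the subsequent algebra is identical. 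Your remark on the degenerate case $\bar\scale_x=0$ (and the alternative via $\partial_{\scale_x}H$) addresses a point the paper leaves implicit.
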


\begin{proof}
	By duality, we obtain, for all $\xi \in \widehat{\mathcal X}$,
	\begin{align*}
	&\frac{\dd}{\dd t}\E_\mu\left[D^\cl(\xi,\eta_t) \right] = 	\sum_{\eta \in \mathcal X} \mathcal L_\ight \E_\eta\left[D^\cl(\xi,\eta_t)\right] \mu(\eta)\\
	&= 	\sum_{\eta \in \mathcal X} \mathcal L_\ight \widehat \E_\xi\left[D^\cl(\xi_t,\eta) \right] \mu(\eta) = 	\sum_{\eta \in \mathcal X} \widehat \E_\xi\left[ \widehat{\mathcal L}_\eft D^\cl(\xi_t,\eta)\right] \mu(\eta)\\
	&= \sum_{x \in V} \widehat \E_\xi\left[\xi_t(x) \left\{\begin{array}{c}
	\sum_{y \in V} \cond_{\{x,y\}}\, \alpha_y
	\left(\E_\mu\left[D^\cl(\xi_t^{x,y},\eta)\right]-\E_\mu\left[D^\cl(\xi_t,\eta) \right]  \right)\\[.15cm]
	+\, \1_{\{x=1\}}\cond_L\, \alpha_L\, (\E_\mu\left[D^\cl(\xi_t^{1,L},\eta)  \right] -\E_\mu\left[D^\cl(\xi_t,\eta)  \right] )\\[.15cm]
	+\, \1_{\{x=N\}}\cond_R\, \alpha_R\, (\E_\mu\left[D^\cl(\xi_t^{N,R},\eta)  \right] -\E_\mu\left[D^\cl(\xi_t,\eta)  \right] )
	\end{array} \right\} \right]\\
	&\, +\sigma  \sum_{x \in V} \widehat \E_\xi\left[\sum_{y \in V}\cond_{\{x,y\}} \xi_t(x)\,   \xi_t(y)\left(\E_\mu\left[D^\cl(\xi_t^{x,y},\eta)\right]-\E_\mu\left[D^\cl(\xi_t,\eta) \right]  \right) \right]\ .
	\end{align*}
	By \eqref{eq:assumption_n_order}, for all $x, y \in V$ and $\xi \in \widehat {\mathcal X}$ with $|\xi|\leq n$, we have
	\begin{align*}
	\E_\mu\left[D^\cl(\xi^{x,y},\eta)\right]-\E_\mu\left[D^\cl(\xi,\eta) \right]   = \frac{\E_\mu\left[D^\cl(\xi,\eta) \right]}{\bar \scale_x} \left(\bar \scale_y - \bar \scale_x \right)\ ,
	\end{align*}
	and, similarly,
	\begin{align*}
	\E_\mu\left[D^\cl(\xi^{1,L},\eta) \right]-\E_\mu\left[D^\cl(\xi,\eta)\right]&= \frac{\E_\mu\left[D^\cl(\xi,\eta) \right]}{\bar \scale_1}\left(\scale_L-\bar \scale_1 \right)\\
	\E_\mu\left[D^\cl(\xi^{N,R},\eta) \right]-\E_\mu\left[D^\cl(\xi,\eta)\right]&= \frac{\E_\mu\left[D^\cl(\xi,\eta) \right]}{\bar \scale_N}\left(\scale_R-\bar \scale_N \right)\ .	
	\end{align*}
	As a consequence, we further obtain
	\begin{align*}
	&\frac{\dd}{\dd t}\E_\mu\left[D^\cl(\xi,\eta_t) \right] = 	\sum_{\eta \in \mathcal X} \mathcal L_\ight \E_\eta\left[D^\cl(\xi,\eta_t)\right] \mu(\eta)\\
	&=\sum_{x \in V} \widehat \E_\xi\left[\frac{\xi_t(x)}{\bar \scale_x}\, \E_\mu\left[D^\cl(\xi_t,\eta) \right] \left\{\begin{array}{l}
	\sum_{y \in V}\cond_{\{x,y\}}\, \alpha_y \left(\bar \scale_y-\bar \scale_x \right)\\[.15cm]
	+ \1_{\{x=1\}}\cond_L \alpha_L\left(\scale_L-\bar \scale_1 \right)\\[.15cm]
	+\1_{\{x=N\}}\cond_R \alpha_R \left(\scale_R-\bar \scale_N \right)
	\end{array} \right\} \right]\\
	&\,+\sigma \sum_{x \sim y} \cond_{\{x,y\}}\left( \bar \scale_y-\bar \scale_x\right)^2 \widehat \E_\xi\left[\frac{\xi_t(x)}{\bar \scale_x} \frac{\xi_t(y)}{\bar \scale_y}  \E_{\mu}\left[D^\cl(\xi_t,\eta) \right]\right]\ .
	\end{align*}
	The observation that each of the expressions between curly brackets above equals zero because of the choice of the scale parameters $\{\bar \scale_x :  x \in V \}$ (cf.\ \eqref{eq:scalex} and \eqref{eq:harmonic_theta}) concludes the proof.\end{proof}

\begin{remark} \begin{enumerate}[label={\normalfont(\alph*)}]
		\item For all $\xi \in \widehat{\mathcal X}$ with $\sum_{z \in V} \xi(z) \geq 2$, for all times $t >0$ and for all sites $x, y \in V$, the geometric assumption on the connectedness of $(V,\sim)$ implies that
		\begin{equation*}
		\widehat \Pr_\xi\left(\xi_t(x) \xi_t(y)>0 \right)>0\ .
		\end{equation*}
		As a consequence,   the sign of the time derivative in \eqref{eq:time_derivative} for $\xi \in \widehat {\mathcal X}$ with $\sum_{z \in V} \xi(z) \geq 2$ and for $t > 0$ is determined by $\sigma \in \{-1,0,1\}$. In particular, if the probability measure $\mu$ and the configuration $\xi \in \widehat {\mathcal X}$ are given as in Theorem \ref{theorem:sign_two_point}, the convergence
		\begin{align*}
		\E_\mu\left[D^\cl(\xi,\eta_t) \right] \underset{t\to \infty}\longrightarrow \E_{\mu_{\scale_L,\scale_R}}\left[D^\cl(\xi,\eta) \right]
		\end{align*}
		is \emph{strictly monotone} in time: decreasing for $\sigma = -1$ and increasing for $\sigma =1$.
		
		\item In the particular situation in which $\xi = \delta_x+\delta_y$ for some $x, y \in V$ and the probability measure $\mu$ satisfies the hypothesis of Theorem \ref{theorem:sign_two_point} for $n \geq 2$, the expression in \eqref{eq:time_derivative} further simplifies yielding, for all $t > 0$,
		\begin{align}\label{eq:two_particle_duality_formula}\nonumber
		&\E_\mu\left[D^\cl(\delta_x+\delta_y,\eta_t) \right]- \bar \scale_x \bar \scale_y\\
		\nonumber
		&=\E_\mu\left[D^\cl(\delta_x+\delta_y,\eta_t) \right] - \E_\mu\left[D^\cl(\delta_x+\delta_y,\eta_0) \right]\\
		\nonumber
		&= \sigma \int_0^t \sum_{z \sim w} \cond_{\{z,w\}} \left(\bar \scale_w-\bar \scale_z \right)^2 \widehat \E_{\xi=\delta_x+\delta_y}\left[\xi_s(z)\xi_s(w) \right]\dd s\\
		&= \sigma \int_0^t \sum_{z \sim w} \cond_{\{z,w\}} \left(\bar \scale_w-\bar \scale_z \right)^2 \widehat \Pr_{\xi=\delta_x+\delta_y}\left(\xi_s(z)=1\ \text{and}\ \xi_s(w)=1\right)\dd s\ .	
		\end{align}
		If, additionally, we impose
		\begin{align*}
		\alpha_x=\alpha_L=\alpha_R\qquad \text{and}\qquad 
		\cond_L=\cond_R=1\qquad\text{and}\qquad
		\cond_{\{x,y\}}=\1_{\{|x-y|=1\}}\ ,
		\end{align*} for all $x, y \in V$, we further get (cf.\ \eqref{eq:linear_profile})
		\begin{equation}\label{eq:two_particle_duality_formula2}
		\E_\mu\left[D^\cl(\delta_x+\delta_y,\eta_t) \right]- \bar \scale_x \bar \scale_y=\sigma \frac{\left(\scale_L-\scale_R\right)^2}{\left(N+1\right)^2}  \int_0^t \widehat \Pr_{\xi=\delta_x+\delta_y}\left(\sum_{z=1}^{N-1} \xi_s(z)\xi_s(z+1) = 1\right)\dd s\ .
		\end{equation}
	\end{enumerate}
\end{remark}

\section{Orthogonal dualities}\label{section:orthogonal_duality}

By orthogonal dualities we refer to a specific subclass of  duality functions $D(\xi,\eta)$ in the form \eqref{eq:duality_factorized}. This subclass consists of jointly factorized functions whose each \textquotedblleft bulk\textquotedblright\ single-site duality function
\begin{equation*}
(k,n) \in \N_0 \times \N_0 \mapsto d_x(k,n) \in \R
\end{equation*} is a family of  polynomials in the $n$-variables and orthogonal  w.r.t.\ a suitable probability measure $\nu_x$ on $\N_0$, i.e.\ for all $k, \ell \in \N_0$,
\begin{align*}
\sum_{n=0}^\infty d_x(k,n)\, d_x(\ell,n)\, \nu_x(n) = \1_{\{k=\ell\}} \| d_x(k,\cdot)\|^2_{L^2(\nu_x)}\ .
\end{align*}
Orthogonal duality functions for exclusion, inclusion and independent particle systems with no interaction with reservoirs  have been first introduced in \cite{franceschini_stochastic_2017} by direct computations and then characterized in \cite{redig_factorized_2018} through generating function techniques. There, the dual particle system has the same law of the original particle system; therefore, orthogonal dualities are actually self-dualities. Moreover, for each $\sigma \in \{-1,0,1\}$,	 these jointly factorized orthogonal dualities  consist of products of hypergeometric functions of the following two types: either
\begin{align}\label{eq:hypergeometric_1}
\pFq{2}{0}{-k,-n}{-}{-u}:=&\   \sum_{\ell=0}^{k}\binom{k}{\ell} \left(\frac{n!}{(n-\ell)!}\1_{\{\ell \leq n\}}\right) u^\ell
\end{align}
or
\begin{align}\label{eq:hypergeometric_2}
\pFq{2}{1}{-k,-n}{v}{u}	
&:=\   \sum_{\ell=0}^{k} \binom{k}{\ell}\left( \frac{\Gamma(v) }{\Gamma(v+\ell)}\frac{n!}{(n-\ell)!}\1_{\{\ell\leq n\}}\right)	u^\ell\ ,
\end{align}
with $k,n \in \N_0$ and $u, v	 \in \R.$ 
More specifically, these orthogonal  single-site self-duality functions are Kravchuk polynomials for $\SEP$ $(\sigma = -1)$, Charlier polynomials for $\IRW$ ($\sigma = 0$) and Meixner polynomials for $\SIP$ ($\sigma = 1$) (see e.g.\ \cite{koekoek_hypergeometric_2010}). It turns out that such single-site self-duality functions are  orthogonal families w.r.t.\  the   single-site marginals of the  stationary (actually reversible) product measures  of the corresponding particle system; in particular, Kravchuk polynomials are orthogonal w.r.t.\ Binomial distributions, Charlier polynomials w.r.t.\ Poisson distributions and Meixner polynomials w.r.t.\ Negative Binomial distributions. More precisely, because in this setting there exists a one-parameter family of stationary product measures for each of the three particle systems (see also Section \ref{section:measures} above), this corresponds to the existence of a one-parameter family of orthogonal duality functions.

This correspondence between orthogonal duality functions and stationary measures may suggest that,  knowing a stationary measure of a particle system, an orthogonal family of observables of this system would correspond, in general, to duality functions.  This program, however, besides not being generally verifiable, does not apply to the case of particle systems in contact with reservoirs, for which the non-equilibrium stationary measures are, generally speaking, not in product form and not explicitly known (see also Section \ref{section:non-equilibrium}).

Nevertheless, from an algebraic point of view (see e.g.\ \cite{giardina_duality_2007}), new duality relations may be generated from the knowledge of a  duality relation and a \emph{symmetry} of one of the two generators involved in the duality relation. In brief, given the following duality relation
\begin{align*}
\widehat{\mathcal L}_\eft D(\xi,\eta)= \mathcal L_\ight D(\xi,\eta)
\end{align*}
for all $ \xi \in \widehat{\mathcal X}$, $\eta \in \mathcal X$,
and a symmetry $\widehat{\mathcal K}$   for the generator $\widehat{\mathcal L}$, i.e., for all $f: \widehat{\mathcal X}\to \R$ and $\xi \in \widehat{\mathcal X}$,
\begin{align}\label{eq:commutation_relation}
\widehat{\mathcal K}\, \widehat{\mathcal L} f(\xi) = \widehat{\mathcal L}\, \widehat{\mathcal K} f(\xi)\ ,
\end{align}
then, if  $F(\widehat{\mathcal K})$ with $F: \R \to \R$ is a well-defined operator, the function $(F(\widehat{\mathcal K}))_\eft D(\xi,\eta)$ is a duality function between $\mathcal L$ and $\widehat{\mathcal L}$. Indeed, for all  $\eta \in \mathcal X$ and $\xi \in \widehat{\mathcal X}$, we have	
\begin{align*}
\widehat{\mathcal L}_\eft (F(\widehat{\mathcal K}))_\eft D(\xi,\eta)
&= (F(\widehat{\mathcal K}))_\eft \widehat{\mathcal L}_\eft D(\xi,\eta)\\
&= (F(\widehat{\mathcal K}))_\eft \mathcal L_\ight D(\xi,\eta)\\ &= \mathcal L_\ight(F(\widehat{\mathcal K}))_\eft D(\xi,\eta)\ .
\end{align*}

This latter approach is the one we follow here (Theorem \ref{proposition:orthogonal_duality} below) to recover  a  one-parameter family of orthogonal duality functions  for boundary driven particle systems. Its proof combines two ingredients: first, as already proved in \cite{carinci_consistent_2019}, we observe that the so-called \emph{annihilation operator on $V \cup \{L,R\}$} given, for all $f : \widehat{\mathcal X} \to \R$, by	
\begin{align}\label{eq:K}
\widehat{\mathcal K}f(\xi) &
= \widehat{\mathcal K}^\bulk f(\xi)+ \widehat{\mathcal K}^{L,R}f(\xi)\ ,
\end{align}
where
\begin{align*}
\widehat{\mathcal K}^\bulk f(\xi) =  \sum_{x \in V} \widehat{\mathcal K}_x f(\xi) = \sum_{x \in V} \xi(x)\, f(\xi-\delta_x)
\end{align*}
and
\begin{align*}
\widehat{\mathcal K}^{L,R}f(\xi) = \widehat{\mathcal K}_L f(\xi)+\widehat{\mathcal K}_R f(\xi)
=  \xi(L)\, f(\xi-\delta_L) + \xi(R)\, f(\xi-\delta_R)\ ,
\end{align*}
is a symmetry for the generator $\widehat{\mathcal L}$  associated to the particle systems with purely absorbing reservoirs and defined in \eqref{eq:generator_full_dual}. Then, we obtain the candidate orthogonal dualities by acting with  suitable exponential functions of this symmetry $\widehat{\mathcal K}$ on the classical duality functions appearing in Proposition \ref{proposition:classical_duality}. We recall that in \eqref{eq:single-site_orthogonal_LR} below, the convention $0^0:=1$ holds.

\begin{theorem}[\textsc{orthogonal duality functions}]\label{proposition:orthogonal_duality}
	For each choice of $\sigma \in \{-1,0,1\}$, let $\mathcal L$ and $\widehat{\mathcal L}$ be the infinitesimal generators given in \eqref{eq:generator_full} and \eqref{eq:generator_full_dual}, respectively, associated to the particle systems $\{\eta_t: t \geq 0 \}$ and $\{\xi_t: t \geq 0 \}$. Then the duality relations in \eqref{eq:duality_relation_expectations} and \eqref{eq:duality_relation_generators} hold with the duality functions $D^\oo_\scale : \widehat{\mathcal X}\times \mathcal X \to \R$ defined, for all $\scale \in \Scale$, as follows: for all configurations $\eta \in \mathcal X$ and $\xi \in \widehat{\mathcal X}$,
\begin{align*}
D^\oo_\scale(\xi,\eta)=d^\oo_{L,\scale}(\xi(L))\times\left( \prod_{x\in V} d^\oo_{x,\scale}(\xi(x),\eta(x))\right)\times d^\oo_{R,\scale}(\xi(R))
\end{align*}	
	where, for all $x\in V$ and $k,n\in \N_0$,
	\begin{align}
	d^\oo_{x,\scale}(k,n)\ &=\ (-\scale)^k \times \begin{dcases} \pFq{2}{1}{-k,-n}{-\alpha_x}{\frac{1}{\scale}} & \sigma= -1\\[.15cm]
	\pFq{2}{0}{-k,-n}{-}{-\frac{1}{\scale \alpha_x}} &\sigma = 0\\[.15cm]
	\pFq{2}{1}{-k,-n}{\alpha_x}{-\frac{1}{\scale}} &\sigma=1\ ,
	\end{dcases}
	\label{eq:single-site_orthogonal}
	\end{align}
	and
	\begin{align}\label{eq:single-site_orthogonal_LR}
	&d^\oo_{L,\scale}(k) = \left(\scale_L-\scale \right)^k \qquad \text{and}\qquad
	d^\oo_{R,\scale}(k) = \left(\scale_R-\scale \right)^k\  .
	\end{align}
\end{theorem}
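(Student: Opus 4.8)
The plan is to \emph{produce} the orthogonal duality functions $D^\oo_\scale$ out of the classical ones $D^\cl$ of Proposition~\ref{proposition:classical_duality} through the symmetry mechanism recalled immediately before the theorem, taking as symmetry $\widehat{\mathcal K}$ the annihilation operator in~\eqref{eq:K} and as $F(\widehat{\mathcal K})$ the exponential $e^{-\scale\widehat{\mathcal K}}$. Thus I would first verify that $\widehat{\mathcal K}$ is a symmetry of $\widehat{\mathcal L}$, i.e.\ $\widehat{\mathcal K}\,\widehat{\mathcal L} = \widehat{\mathcal L}\,\widehat{\mathcal K}$. Writing $\widehat{\mathcal K} = \widehat{\mathcal K}^\bulk + \widehat{\mathcal K}^{L,R}$ and $\widehat{\mathcal L} = \widehat{\mathcal L}^\bulk + \widehat{\mathcal L}^\bdd$, it suffices to check the blockwise commutators. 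The identity $\widehat{\mathcal K}^\bulk \widehat{\mathcal L}^\bulk = \widehat{\mathcal L}^\bulk \widehat{\mathcal K}^\bulk$ is precisely the consistency property of $\SEP$, $\SIP$, $\IRW$ established in~\cite{carinci_consistent_2019}; moreover $\widehat{\mathcal K}^{L,R}$ commutes with $\widehat{\mathcal L}^\bulk$, and $\widehat{\mathcal K}^\bulk$ commutes with $\widehat{\mathcal L}^\bdd$ through all sites except $1$ and $N$. The only genuinely new brackets are the boundary ones: a short direct computation on a test function $f:\widehat{\mathcal X}\to\R$ gives $[\widehat{\mathcal K}_1,\widehat{\mathcal L}_L]f(\xi) = \alpha_L\,\xi(1)\,f(\xi-\delta_1)$ and $[\widehat{\mathcal K}_L,\widehat{\mathcal L}_L]f(\xi) = -\alpha_L\,\xi(1)\,f(\xi-\delta_1)$, so $[\widehat{\mathcal K}_1+\widehat{\mathcal K}_L,\widehat{\mathcal L}_L]=0$, and analogously $[\widehat{\mathcal K}_N+\widehat{\mathcal K}_R,\widehat{\mathcal L}_R]=0$; hence $[\widehat{\mathcal K},\widehat{\mathcal L}^\bdd]=0$.

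Since $\widehat{\mathcal K}$ strictly decreases the total particle number $|\xi|$, it is nilpotent on each invariant set $\widehat{\mathcal X}_n$; hence $e^{-\scale\widehat{\mathcal K}} = \sum_{j\ge0}\frac{(-\scale)^j}{j!}\widehat{\mathcal K}^j$ is a well-defined operator (a finite sum on each $\widehat{\mathcal X}_n$), and by the algebraic argument recalled above the theorem the function $(e^{-\scale\widehat{\mathcal K}})_\eft D^\cl$ satisfies~\eqref{eq:duality_relation_generators}, hence also~\eqref{eq:duality_relation_expectations}, as a duality between $\mathcal L$ and $\widehat{\mathcal L}$. It then remains to identify $(e^{-\scale\widehat{\mathcal K}})_\eft D^\cl$ with $D^\oo_\scale$. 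Because the single-site annihilation operators $\widehat{\mathcal K}_z$, $z\in V\cup\{L,R\}$, pairwise commute, $e^{-\scale\widehat{\mathcal K}} = \prod_{z} e^{-\scale\widehat{\mathcal K}_z}$, and since $D^\cl$ is jointly factorized as in~\eqref{eq:duality_factorized}, each $e^{-\scale\widehat{\mathcal K}_z}$ acts only on the $z$-th single-site factor. Using the elementary identity $(e^{-\scale\widehat{\mathcal K}_z}g)(\xi) = \sum_{\ell=0}^{\xi(z)}\binom{\xi(z)}{\ell}(-\scale)^\ell\,g(\xi-\ell\delta_z)$ and reindexing, the transformed bulk factor at $x\in V$ becomes $\sum_{m=0}^{\xi(x)}\binom{\xi(x)}{m}(-\scale)^{\xi(x)-m}\,d^\cl_x(m,\eta(x))$; inserting~\eqref{eq:single-site_classical}--\eqref{eq:wx} and comparing termwise with the truncated hypergeometric series~\eqref{eq:hypergeometric_1}--\eqref{eq:hypergeometric_2} one recognises this as exactly $d^\oo_{x,\scale}(\xi(x),\eta(x))$ of~\eqref{eq:single-site_orthogonal}, for each $\sigma\in\{-1,0,1\}$. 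For the reservoir sites the same computation gives $\sum_{m=0}^{\xi(L)}\binom{\xi(L)}{m}(-\scale)^{\xi(L)-m}(\scale_L)^m = (\scale_L-\scale)^{\xi(L)} = d^\oo_{L,\scale}(\xi(L))$, and likewise for $R$. Multiplying the transformed factors yields $(e^{-\scale\widehat{\mathcal K}})_\eft D^\cl = D^\oo_\scale$, which completes the proof.

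The main obstacle I anticipate is of a bookkeeping nature and concentrated in two places. First, the boundary commutation in the first step: the cancellation between the ``$1$'' and ``$L$'' brackets (and symmetrically for $R$) has to be checked carefully, since $\widehat{\mathcal L}_L$ is a pure absorption term rather than a reversible bond generator, so its commutator with $\widehat{\mathcal K}$ is not covered by the consistency result of~\cite{carinci_consistent_2019}. Second, the termwise identification of the collapsed binomial sum with the series~\eqref{eq:hypergeometric_1}--\eqref{eq:hypergeometric_2} in the exclusion case $\sigma=-1$: one must keep track of the conventions for $\Gamma(v+\ell)/\Gamma(v)$ at the negative integer $v=-\alpha_x$ together with the truncations $\1_{\{\ell\le\eta(x)\}}$ and $\1_{\{\ell\le\alpha_x\}}$, so that the sum indeed reduces to the stated hypergeometric function in~\eqref{eq:single-site_orthogonal} and no spurious terms survive when $\xi(x)$ exceeds $\alpha_x$.
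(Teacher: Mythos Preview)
Your proposal is correct and follows essentially the same approach as the paper: both apply the exponential $e^{-\scale\widehat{\mathcal K}}$ of the annihilation symmetry to the classical duality $D^\cl$, exploit the factorized structure to reduce to single-site computations, and then identify the resulting binomial sums with the hypergeometric expressions in~\eqref{eq:single-site_orthogonal}--\eqref{eq:single-site_orthogonal_LR}. The only difference is that you spell out the boundary commutator cancellation $[\widehat{\mathcal K}_1+\widehat{\mathcal K}_L,\widehat{\mathcal L}_L]=0$ and the nilpotence argument for well-definedness of $e^{-\scale\widehat{\mathcal K}}$, whereas the paper simply cites~\cite{carinci_consistent_2019} for the commutation relation and leaves the rest implicit.
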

\begin{proof} We start with the observation that, for each $\sigma \in \{-1,0,1\}$,  the commutation relation \eqref{eq:commutation_relation} between the annihilation operator $\widehat{\mathcal K}$ in \eqref{eq:K} and the generator $\widehat{\mathcal L}$ \eqref{eq:generator_full_dual} holds (for a detailed proof, we refer to  e.g.\  \cite[Section 5]{carinci_consistent_2019}).
	
	As a consequence, for all $\scale \in \Scale$, the following function
	\begin{align}\label{eq:orth_duality_exponential}
	(e^{-\scale \widehat{\mathcal K}})_\eft D^\cl(\xi,\eta)
	\end{align}
	is a duality function between $\mathcal L$ and $\widehat{\mathcal L}$. In particular, recalling the definitions of single-site classical duality functions in \eqref{eq:single-site_classical}--\eqref{eq:single_site_classical_LR} and hypergeometric functions in \eqref{eq:hypergeometric_1}--\eqref{eq:hypergeometric_2},  due to the factorized form of both symmetry $e^{-\scale \widehat{\mathcal K}}$ and classical duality function, the combination of
	\begin{align*}
	(e^{-\scale \widehat{\mathcal K}_L}) d^\cl_L(k) &= \sum_{\ell=0}^k  \binom{k}{\ell} d^\cl_L(\ell)\, (-\scale)^{k-\ell}  = (\scale_L-\scale)^k\\
	(e^{-\scale \widehat{\mathcal K}_R}) d^\cl_R(k) &= \sum_{\ell=0}^k  \binom{k}{\ell} d^\cl_R(\ell)\, (-\scale)^{k-\ell}= (\scale_R-\scale)^k\
	\end{align*}
	and
	\begin{equation}\label{eq:binomial_power}
	(e^{-\scale \widehat{\mathcal K}_x})_\eft d^\cl_x(k,n) = \sum_{\ell=0}^k \binom{k}{\ell} d^\cl_x(\ell,n)\,  (-\scale)^{k-\ell} = (-\scale)^k  \begin{dcases} \pFq{2}{1}{-k,-n}{-\alpha_x}{\frac{1}{\scale}} & \sigma= -1\\[.15cm]
	\pFq{2}{0}{-k,-n}{-}{-\frac{1}{\scale \alpha_x}} &\sigma = 0\\[.15cm]
	\pFq{2}{1}{-k,-n}{\alpha_x}{-\frac{1}{\scale}} &\sigma=1\ ,
	\end{dcases}
	\end{equation}
	for all $x \in V$, concludes the proof.
\end{proof}

The above method to derive the orthogonal duality functions may be summarized as consisting in the application on the classical duality functions of a suitable symmetry on the \textquotedblleft left\textquotedblright\ dual variables $\xi$. This approach differs from all those previously employed in the context of closed systems: e.g.,  \cite{franceschini_stochastic_2017} is based on solving suitable recurrence relations, \cite{redig_factorized_2018} on computing generating functions, while \cite{carinci2019orthogonal} on acting with suitable unitary symmetries on the \textquotedblleft right\textquotedblright\ variables $\eta$. The main advantage of our method is that it works in both contexts of closed and open systems with no substantial alteration, since the annihilation operator is a commutator of the dual generator in both situations.

\begin{remark}
	To provide the reader with a further interpretation of  orthogonal dualities, we note that the  following formula connecting orthogonal and classical dualities is reminiscent of the Newton binomial formula:
	\begin{align}\label{eq:fake_binomial}
		d^\oo_{x,\scale}(k,n) =\sum_{\ell=0}^k \binom{k}{\ell} d^\cl_x(\ell,n)\, (-\scale)^{k-\ell}\ .
	\end{align}
In particular, setting $\scale=0$ and recalling the convention $0^0:=1$,
\begin{align}\label{eq:classical-orthogonal_theta=0}
 d^\oo_{x,\scale=0}(k,n)= \sum_{\ell=0}^k \binom{k}{\ell} d^\cl_x(\ell,n)\, (-0)^{k-\ell}= 	d^\cl_x(k,n)\ ,
\end{align}
i.e.,    the classical duality functions, $D^\cl(\xi,\eta)$, may be seen as a particular instance of the orthogonal duality functions if the scale parameter $\scale \in \Scale$ is set equal to zero, $D^\oo_{\scale=0}(\xi,\eta)$ (cf.\ \cite[\S4.1.1 \& \S4.1.2]{redig_factorized_2018}).
\end{remark}

\begin{remark}[\textsc{orthogonality relations}]\label{remark:orthogonal}
	In general,   the orthogonal duality functions of Theorem \ref{proposition:orthogonal_duality} are \emph{not} orthogonal w.r.t.\ the stationary measure of the particle dynamics in non-equilibrium. In fact, for each choice of $\sigma \in \{-1,0,1\}$ and $\scale \in \Scale$, the  orthogonal duality function $D^\oo_\scale(\xi,\eta)$  gives rise to an  orthogonal basis $\{e_\xi : \xi \in \widehat{\mathcal Y}\}$ of $L^2(\mathcal X, \mu_\scale)$, where $\mu_\scale$ is given in \eqref{eq:muscale},
	\begin{align}\label{eq:Y}
	e_\xi := D^\oo_\scale(\xi,\cdot)\qquad \text{and}\qquad \widehat{\mathcal Y} :=\{\xi \in \widehat{\mathcal X}: \xi(L)=\xi(R)=0 \}\ .
	\end{align}

	In equilibrium, i.e.\ $\scale_L = \scale_R = \scale \in \Scale$,  we have seen (see Section \ref{section:measures}) that the measure $\mu_\scale$ is stationary for the particle system $\{\eta_t: t\geq 0 \}$. In  non-equilibrium, i.e.\ $\scale_L \neq \scale_R$,  $\mu_\scale$ fails to be stationary. Nevertheless, the aforementioned orthogonality relations still hold in both contexts, regardless of the stationarity of $\mu_\scale$.

\end{remark}

As an immediate consequence of Theorem \ref{proposition:orthogonal_duality}, we can compute the following expectations of the orthogonal duality functions.
\begin{proposition}
	Let $\bubu\in \R$ such that
	\begin{equation}\label{eq:beta_theta}
	\scale:=\scale_R+ \bubu(\scale_L-\scale_R) \in \Scale\ .
	\end{equation}
	Then, for all $t \geq 0$ and  for all configurations $\xi \in \widehat{\mathcal X}$, we have
	\begin{align}\label{eq: expectation at time t of orthogonal duality}
	\E_{\mu_{\scale}}\left[D^\oo_\scale(\xi,\eta_t) \right] = (\scale_L-\scale_R)^{|\xi|} \phi_{t,\bubu}(\xi)\ ,
	\end{align}
	where  $\mu_\scale$ is the product measure (cf.\	 \eqref{eq:muscale}) with scale parameter $\scale = \scale_R+\bubu\left(\scale_L-\scale_R \right)$  and
	\begin{align*}
	\phi_{t,\bubu}(\xi)
	:=\left(-\bubu\right)^{\abs{\xi}} \widehat \E_\xi \left[   \left(\frac{\bubu-1}{\bubu}\right)^{\xi_t(L)} \boldsymbol 1_{\{\xi_t(L)+\xi_t(R)=\abs{\xi}\}}  \right]\ 	.
	\end{align*}
	Moreover,  for all configurations $\xi \in \widehat{\mathcal X}$, we have
	\begin{align}\label{eq: stationary expectation ort duality}
	\E_{\mu_{\scale_L,\scale_R}}\left[D^\oo_\scale(\xi,\eta) \right] = \left(\scale_L-\scale_R\right)^{|\xi|} \phi_{\bubu}(\xi)\ ,
	\end{align}
	where
	\begin{align*}
	\phi_{\bubu}(\xi)
	:=\left(-\bubu\right)^{\abs{\xi}} \widehat \E_\xi \left[   \left(\frac{\bubu-1}{\bubu}\right)^{\xi_\infty(L)}\right].
	\end{align*}
	In particular, $\phi_{\bubu,t}$ and $\phi_{\bubu}$ 	   do not depend on neither $\scale_L$ nor $\scale_R$, but only on $\bubu$, $\sigma \in \{-1,0,1\}$  and the underlying geometry of the system.
\end{proposition}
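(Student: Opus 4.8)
The plan is to start from the orthogonal duality relation of Theorem \ref{proposition:orthogonal_duality} applied with the specific choice of parameter $\scale = \scale_R + \bubu(\scale_L - \scale_R)$, which by hypothesis \eqref{eq:beta_theta} lies in $\Scale$. Taking expectations in \eqref{eq:duality_relation_expectations} with respect to the product measure $\mu_\scale$ on the right-hand side and averaging over the dual process on the left, we obtain
\begin{align*}
\E_{\mu_\scale}\left[D^\oo_\scale(\xi,\eta_t)\right] = \widehat \E_\xi\left[\E_{\mu_\scale}\left[D^\oo_\scale(\xi_t,\eta)\right]\right]\ .
\end{align*}
So the first key step is to evaluate $\E_{\mu_\scale}\left[D^\oo_\scale(\xi',\eta)\right]$ for an arbitrary dual configuration $\xi' \in \widehat{\mathcal X}$. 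Since $D^\oo_\scale$ is in jointly factorized form, this expectation factorizes over sites. For the bulk sites $x \in V$ the relevant fact is the orthogonality relation recalled in Remark \ref{remark:orthogonal}: the functions $d^\oo_{x,\scale}(k,\cdot)$ are orthogonal w.r.t.\ $\nu_{x,\scale}$, and in particular $d^\oo_{x,\scale}(0,\cdot) \equiv 1$, so $\E_{\nu_{x,\scale}}[d^\oo_{x,\scale}(k,\eta(x))] = \1_{\{k=0\}}$. Hence the bulk contribution vanishes unless $\xi'(x) = 0$ for every $x \in V$, i.e.\ unless all particles of $\xi'$ sit on the absorbing sites $L,R$. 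On those sites the factors $d^\oo_{L,\scale}(\xi'(L)) = (\scale_L - \scale)^{\xi'(L)}$ and $d^\oo_{R,\scale}(\xi'(R)) = (\scale_R - \scale)^{\xi'(R)}$ are deterministic. Using $\scale_L - \scale = (1-\bubu)(\scale_L - \scale_R)$ and $\scale_R - \scale = -\bubu(\scale_L-\scale_R)$, we get
\begin{align*}
\E_{\mu_\scale}\left[D^\oo_\scale(\xi',\eta)\right] = \1_{\{\xi'(x)=0\ \forall x \in V\}}\, (1-\bubu)^{\xi'(L)}\, (-\bubu)^{\xi'(R)}\, (\scale_L-\scale_R)^{\xi'(L)+\xi'(R)}\ .
\end{align*}

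The second step is to substitute this back. On the event that $\xi_t$ has no bulk particles we have $\xi_t(L) + \xi_t(R) = |\xi_t| = |\xi|$ by conservation of total particle number under the dual dynamics, so $\xi_t(R) = |\xi| - \xi_t(L)$ and the exponent of $(\scale_L-\scale_R)$ is exactly $|\xi|$. Pulling that factor out,
\begin{align*}
\E_{\mu_\scale}\left[D^\oo_\scale(\xi,\eta_t)\right] = (\scale_L - \scale_R)^{|\xi|}\, \widehat\E_\xi\left[(1-\bubu)^{\xi_t(L)}\, (-\bubu)^{|\xi|-\xi_t(L)}\, \1_{\{\xi_t(L)+\xi_t(R)=|\xi|\}}\right]\ .
\end{align*}
Factoring $(-\bubu)^{|\xi|}$ out of the bracket and writing $(1-\bubu)^{\xi_t(L)}(-\bubu)^{-\xi_t(L)} = \big(\tfrac{1-\bubu}{-\bubu}\big)^{\xi_t(L)} = \big(\tfrac{\bubu-1}{\bubu}\big)^{\xi_t(L)}$ yields precisely $(\scale_L-\scale_R)^{|\xi|}\phi_{t,\bubu}(\xi)$, which is \eqref{eq: expectation at time t of orthogonal duality}.

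The third step, for the stationary identity \eqref{eq: stationary expectation ort duality}, is to take $t \to \infty$. Here one replaces $\E_{\mu_\scale}$ by $\E_{\mu_{\scale_L,\scale_R}}$ directly: the orthogonal duality of Theorem \ref{proposition:orthogonal_duality} holds against the genuine (non-equilibrium) stationary measure $\mu_{\scale_L,\scale_R}$ as well, and stationarity together with \eqref{eq:duality_relation_expectations} gives $\E_{\mu_{\scale_L,\scale_R}}[D^\oo_\scale(\xi,\eta)] = \widehat\E_\xi[\E_{\mu_{\scale_L,\scale_R}}[D^\oo_\scale(\xi_t,\eta)]]$ for every $t$, and then letting $t\to\infty$ one uses that the dual process is eventually absorbed at $\{L,R\}$ (as recalled in Section \ref{section:classical_duality}). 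On the absorbing event the indicator $\1_{\{\xi_t(L)+\xi_t(R)=|\xi|\}}$ equals $1$ almost surely in the limit, so it disappears, leaving $\phi_\bubu(\xi) = (-\bubu)^{|\xi|}\widehat\E_\xi[(\tfrac{\bubu-1}{\bubu})^{\xi_\infty(L)}]$ where $\xi_\infty(L)$ is the (a.s.\ well-defined) number of particles ultimately absorbed at $L$. Finally, the claim that $\phi_{t,\bubu}$ and $\phi_\bubu$ do not depend on $\scale_L, \scale_R$ is immediate from the formulas: the dual dynamics, hence $\widehat\E_\xi$, depends only on the underlying geometry \eqref{eq:environment1}--\eqref{eq:environment2} and on $\sigma$, and the remaining ingredients are explicit functions of $\bubu$ only.

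The computation is essentially routine once the orthogonality relation of Remark \ref{remark:orthogonal} is invoked; the one point that needs a little care — and is the main (minor) obstacle — is the passage to the limit $t\to\infty$ in \eqref{eq: stationary expectation ort duality}: one must justify exchanging the limit with the dual expectation, which follows from dominated convergence since $|\xi_t(L)|, |\xi_t(R)| \le |\xi|$ are bounded and $\xi_t$ is eventually absorbed, so both the integrand and the indicator converge pointwise; and one should note that at $t=0$ the formula \eqref{eq: expectation at time t of orthogonal duality} correctly reduces, via $\1_{\{\xi_0(L)+\xi_0(R)=|\xi|\}}$, to the deterministic value $\E_{\mu_\scale}[D^\oo_\scale(\xi,\eta)]$ computed above, providing a consistency check.
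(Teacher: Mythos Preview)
Your proof is correct and follows essentially the same approach as the paper: apply the orthogonal duality of Theorem~\ref{proposition:orthogonal_duality} to swap expectations, use the orthogonality of the single-site functions $d^\oo_{x,\scale}(k,\cdot)$ w.r.t.\ $\nu_{x,\scale}$ (together with $d^\oo_{x,\scale}(0,\cdot)\equiv 1$) to reduce to the event that all dual particles are absorbed, and then substitute $\scale=\scale_R+\bubu(\scale_L-\scale_R)$. The only minor deviation is in the stationary identity: the paper simply sends $t\to\infty$ in the already-established time-$t$ formula \eqref{eq: expectation at time t of orthogonal duality} and invokes uniqueness of the stationary measure to replace $\E_{\mu_\scale}[\,\cdot\,(\eta_t)]$ by $\E_{\mu_{\scale_L,\scale_R}}[\,\cdot\,(\eta)]$, whereas you restart from duality applied directly against $\mu_{\scale_L,\scale_R}$ and let $t\to\infty$ on the dual side; both arguments are valid and lead to the same conclusion.
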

\begin{proof}
	As a consequence of duality (Theorem \ref{proposition:orthogonal_duality}), we obtain
	\begin{align}\label{eq:lastline}\nonumber
	\E_{\mu_\scale}\left[D^\oo_\scale(\xi,\eta_t) \right]
	&=\ \widehat \E_\xi\left[\E_{\mu_\scale}\left[D^\oo_\scale(\xi_t,\eta) \right] \right]\\
	\nonumber
	&=\ \widehat \E_\xi\left[\left(\scale_L-\scale \right)^{\xi_t(L)}\left(\scale_R-\scale \right)^{\xi_t(R)}	\1_{\{\xi_t(L)+\xi_t(R)=|\xi|\}} \right]\\
	\nonumber
	&=\ \widehat \E_\xi\left[\left(\scale_L-\scale \right)^{\xi_t(L)}\left(\scale_R-\scale \right)^{|\xi|-\xi_t(L)}	\1_{\{\xi_t(L)+\xi_t(R)=|\xi|\}} \right]\\
	&=\ \left(\scale_R-\scale \right)^{|\xi|}  \widehat \E_\xi\left[\left(\scale_L-\scale \right)^{\xi_t(L)}\left(\scale_R-\scale \right)^{-\xi_t(L)}	\1_{\{\xi_t(L)+\xi_t(R)=|\xi|\}} \right]\ ,
	\end{align}
where in the second identity we have used orthogonality of the single-site duality functions $d^\oo_{x,\scale}(k,\cdot) $ w.r.t.\ the marginal $\nu_{x,\scale}$ (see also Remark \ref{remark:orthogonal}) and the observation that
\begin{align*}
	d^\oo_{x,\scale}(0,\cdot)\equiv 1\ ,\quad x \in V\ .
\end{align*} 
	Inserting $\scale = \scale_R+\bubu (\scale_L-\scale_R)$ (cf.\ \eqref{eq:beta_theta}) in the last line of \eqref{eq:lastline}, we get \eqref{eq: expectation at time t of orthogonal duality}. By sending $t \to \infty$, the uniqueness of the stationary measure yields \eqref{eq: stationary expectation ort duality}.
\end{proof}

\begin{remark}
	For the choice $\bubu=\frac{1}{2}$ and, thus, $\scale=\frac{\scale_L+\scale_R}{2}$, \eqref{eq: expectation at time t of orthogonal duality} and \eqref{eq: stationary expectation ort duality}	further simplify as
	\begin{align}
	\E_{\mu_{\scale}}\left[D^\oo_\scale(\xi,\eta_t) \right] = \left(\frac{\scale_L-\scale_R}{2}\right)^{|\xi|} \widehat \E_\xi \left[   \left(-1\right)^{|\xi|-\xi_t(L)} \boldsymbol 1_{\{\xi_t(L)+\xi_t(R)=\abs{\xi}\}} \right]
	\end{align}
	and
	\begin{align}
	\E_{\mu_{\scale_L,\scale_R}}\left[D^\oo_\scale(\xi,\eta) \right] =\left(\frac{\scale_L-\scale_R}{2}\right)^{|\xi|}  \widehat \E_\xi \left[   \left(-1\right)^{|\xi|-\xi_\infty(L)}\right]\ .
	\end{align}
\end{remark}

\section{Higher order correlations in	 non-equilibrium}\label{section:n-point}
In this section, we study higher order space correlations for the non-equilibrium stationary measures presented in Section \ref{section:measures}. In particular, we show in Theorem \ref{theorem:n_correlation} below, by using the orthogonal duality functions of Section \ref{section:orthogonal_duality}, that the $n$-point correlation functions in non-equilibrium may be factorized into a first term, namely $(\scale_L-\scale_R)^n$, and a second term, which we call  $\psi$ and which is independent  of the values $\scale_L$ and $\scale_R$. This result may be seen as a higher order  generalization of the  decomposition obtained for the simple symmetric exclusion process in \cite[Eqs.\ (2.3)--(2.8)]{derrida_entropy_2007}. There the authors exploit the matrix formulation of the non-equilibrium stationary measure to recover the explicit expression for the first, second and third order correlation functions.

While the coefficients $\psi$ in \eqref{eq:psi_explicit} for the case of independent random walkers are identically zero (see item \ref{it:main_1} after Theorem \ref{theorem:n_correlation} below), for the interacting case ($\sigma \in \{-1,1\}$) they are expressed in terms of absorption probabilities of both  interacting and independent dual particles. These absorption probabilities -- apart from some special instances, see e.g.\ \cite{derrida_entropy_2007} and \cite[Section 6.1]{carinci_duality_2013-1} -- are not explicitly known. Nonetheless, Theorem \ref{theorem:n_correlation}  -- and the related Theorem \ref{theorem:main} -- highlight the common structure of the higher order correlations for all three particle systems considered in this paper. In particular, this common structure arises for all  values of the parameters $\scale_L$ and $\scale_R \in \Scale$ and with all disorders $(\boldsymbol \cond, \boldsymbol \alpha)$ and parameters $\{\cond_L,\cond_R,\alpha_L,\alpha_R \}$ as in \eqref{eq:environment1}--\eqref{eq:environment2}. Moreover, along the same lines, we show that all higher order space correlations at any finite time $t > 0$ for the particle system started from suitable product measures exhibit the same structure. This is the content of Theorem \ref{theorem:main} in Section \ref{section:correlations_time_t} below. In fact, we derive Theorem \ref{theorem:n_correlation} on the structure of stationary correlations from the more general result stated in Theorem \ref{theorem:main}, whose proof is deferred to Section \ref{section:proof_main}.

\subsection{Stationary non-equilibrium correlations and cumulants}

For each choice of $\sigma \in \{-1,0,1\}$, we recall that $\mu_{\scale_L,\scale_R}$ denotes the non-equilibrium stationary measure of the particle system $\{\eta_t: t \geq 0 \}$ with generator $\mathcal L$ given in \eqref{eq:generator_full}. Moreover, let us recall the definition of $\{\bar \scale_x: x \in V \}$ in \eqref{eq:scalex} and introduce the following ordering of dual configurations: for all $\xi \in \widehat{\mathcal X}$,
\begin{align}\label{eq:ordering}
\zeta \leq \xi\quad \text{if and only if}\quad \zeta \in \widehat{\mathcal X}\quad \text{and}\quad\begin{split}
\zeta(L)&\leq \xi(L)\ ,\quad
\zeta(R)\leq \xi(R)\\
\zeta(x)&\leq\xi(x)\ ,\quad \text{for all}\ x \in V	\ .
\end{split}
\end{align}
Analogously, we say that $\zeta < \xi$ if $\zeta \leq \xi$ and at least  one of the inequalities in \eqref{eq:ordering} is strict. Finally, given $\xi, \zeta\in \widehat{\mathcal X}$, let $\xi\pm\zeta$ denote  the configuration  with $\xi(x)\pm\zeta(x)$ particles at site $x$, for all $x \in V \cup \{L,\R\}$, as long as $\xi\pm \zeta \in \widehat {\mathcal X}$.

In what follows, for all choices of $\sigma \in \{-1,0,1\}$,  $\widehat \Pr$ and $\widehat \E$ denote the law and expectation, respectively, of the dual process with either exclusion ($\sigma = -1$), inclusion ($\sigma =1$) or no interaction ($\sigma =0$), while we adopt $\widehat \Pr^\IRW$ and $\widehat \E^\IRW$ to refer to the law and corresponding expectation, respectively, of  the dual process consisting of non-interacting random walks ($\sigma =0$).
\begin{theorem}[\textsc{stationary correlation functions}]\label{theorem:n_correlation}
	For all $n \in \N$ with $n \leq |V|$	 and for all $x_1, \ldots, x_n \in V$ with $x_i \neq x_j$ if $i \neq j$, by setting
	\begin{equation*}\xi=\delta_{x_1}+\cdots+\delta_{x_n}\ ,\end{equation*}
	we have
	\begin{align}\label{eq:psi}
	\nonumber
	\E_{\mu_{\scale_L,\scale_R}}\left[\prod_{i=1}^n\left(\frac{\eta(x_i)}{\alpha_{x_i}} - \bar \scale_{x_i}\right)\right] &= \left(\scale_L-\scale_R \right)^n \psi(\xi)\\
	&=\left(\scale_L-\scale_R \right)^n \psi(\delta_{x_1}+\cdots+\delta_{x_n})\  ,
	\end{align}
	where
	\begin{equation}\label{eq:psi_explicit}
	\psi(\xi)= \sum_{\zeta \le \xi}	 (-1)^{|\xi|-|\zeta|}\, \widehat \P_{\xi-\zeta}^{\IRW}((\xi-\zeta)_\infty(L)=\abs{\xi-\zeta})\,  \widehat \P_\zeta (\zeta_\infty(L)=\abs{\zeta})\ .
	\end{equation}
	In particular, $\psi(\xi) \in \R$  and it  does not depend on neither $\scale_L$ nor $\scale_R$, but only on $\sigma \in \{-1,0,1\}$ and the underlying geometry (see Eqs.\ \eqref{eq:environment1}--\eqref{eq:environment2}) of the system.
\end{theorem}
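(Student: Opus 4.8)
The plan is to derive Theorem \ref{theorem:n_correlation} from the orthogonal duality of Theorem \ref{proposition:orthogonal_duality} by choosing the free parameter $\scale$ so that the reference product measure $\mu_\scale$ coincides with the local equilibrium product measure at the right reservoir, namely $\bubu = 0$ and $\scale = \scale_R$. First I would observe that the centered moments on the left-hand side of \eqref{eq:psi} are, up to normalization, the classical duality functions $D^\cl$ evaluated at $\eta$ and recentered around the stationary profile $\bar{\boldsymbol \scale}$. More precisely, since $\frac{\eta(x_i)}{\alpha_{x_i}} = D^\cl(\delta_{x_i},\eta)$, the product $\prod_{i=1}^n\big(\frac{\eta(x_i)}{\alpha_{x_i}} - \bar\scale_{x_i}\big)$ expands, by the fact that the $x_i$ are distinct, into $\sum_{\zeta \le \xi}(-1)^{|\xi|-|\zeta|}\big(\prod_{x}\bar\scale_x^{\xi(x)-\zeta(x)}\big)D^\cl(\zeta,\eta)$. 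This is exactly the kind of "tilted classical duality" that, by the binomial-type identity \eqref{eq:fake_binomial}, assembles into an orthogonal duality function; in fact I expect the natural statement to be that $\prod_{i=1}^n\big(\frac{\eta(x_i)}{\alpha_{x_i}} - \bar\scale_{x_i}\big)$ equals $D^\oo_{\bar{\boldsymbol\scale}}(\xi,\eta)$ for an inhomogeneous-$\scale$ version of the orthogonal duality, where the parameter at bulk site $x$ is $\bar\scale_x$ and at $L,R$ is $\scale_L,\scale_R$ respectively.

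The key computational step is then to evaluate $\E_{\mu_{\scale_L,\scale_R}}$ of this tilted duality function using \emph{two} dualities simultaneously: the classical duality to handle the $\bar\scale_x$-tilts (which produces absorption probabilities of the dual process $\{\xi_t\}$, because $\bar\scale_x = \scale_R + \widehat p_\infty(\delta_x,\delta_L)(\scale_L - \scale_R)$ from \eqref{eq:scalexxx}), and the observation that $\bar\scale_x$ itself is the absorption expectation of a \emph{single} dual walker. Writing $\bar\scale_x = \scale_R + (\scale_L-\scale_R)\widehat\P^{\IRW}_{\delta_x}(\delta_x\text{-walker absorbed in }L)$ — here a single particle moves as a random walk regardless of $\sigma$, which is why $\widehat\P^{\IRW}$ appears — and combining with the stationary expectation formula \eqref{eq: stationary expectation ort duality} specialized at $\bubu=0$, i.e. $\E_{\mu_{\scale_L,\scale_R}}[D^\oo_{\scale_R}(\zeta,\eta)] = (\scale_L-\scale_R)^{|\zeta|}\widehat\P_\zeta(\zeta_\infty(L)=|\zeta|)$, one should be able to expand everything and collect powers of $(\scale_L - \scale_R)$. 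Every term in the resulting double sum carries exactly $(\scale_L-\scale_R)^n$: the tilts contribute $(\scale_L-\scale_R)^{n-|\zeta|}$ worth of absorption factors (with a $(-1)^{n-|\zeta|}$ from the $\scale_R$ pieces cancelling in the recentering) and the orthogonal-duality stationary expectation contributes $(\scale_L-\scale_R)^{|\zeta|}$. The remaining coefficient is precisely $\psi(\xi)$ as in \eqref{eq:psi_explicit}.

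Concretely the steps in order are: (i) expand the centered product into $\sum_{\zeta\le\xi}(-1)^{|\xi|-|\zeta|}\big(\prod_x \bar\scale_x^{\xi(x)-\zeta(x)}\big)D^\cl(\zeta,\eta)$; (ii) substitute $\bar\scale_x = \scale_R + (\scale_L-\scale_R)\,\widehat p_\infty(\delta_x,\delta_L)$ and re-expand, recognizing that the difference $\bar\scale_x - \scale_R$ carries one factor of $(\scale_L-\scale_R)$, so that the $\scale_R$-contributions recombine to turn $D^\cl(\zeta,\eta)$ into the centered orthogonal duality $D^\oo_{\scale_R}(\zeta,\eta)$ — this is the place where the identity \eqref{eq:fake_binomial}/\eqref{eq:classical-orthogonal_theta=0} and the factorized structure over sites must be used carefully; (iii) take $\E_{\mu_{\scale_L,\scale_R}}$, apply \eqref{eq: stationary expectation ort duality} at $\bubu = 0$ to each $D^\oo_{\scale_R}(\zeta,\cdot)$ term, and identify the $\widehat p_\infty(\delta_x,\delta_L)$ factors attached to the non-$\zeta$ sites as a single-particle (hence $\IRW$) absorption probability $\widehat\P^{\IRW}_{\xi-\zeta}((\xi-\zeta)_\infty(L) = |\xi-\zeta|)$, using that independent walkers absorb independently; (iv) collect the total power of $(\scale_L-\scale_R)$, check it is $n$, and read off $\psi$. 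The cumulant statement then follows from the correlation statement by the standard combinatorial relation between joint moments and cumulants, since multiplying each centered variable by $(\scale_L-\scale_R)$ scales the $n$-th cumulant by $(\scale_L-\scale_R)^n$.

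The main obstacle I anticipate is step (ii)–(iii): bookkeeping the interplay between the \emph{multiplicative} tilt by $\bar\scale_x$ (which when expanded via $\bar\scale_x = \scale_R + (\scale_L-\scale_R)\widehat p_\infty$ produces a sum over subsets of the "non-$\zeta$" support), the factorized product structure of both $D^\cl$ and $D^\oo$, and the fact that the single-walker absorption probabilities are genuinely $\sigma$-independent (one particle never interacts with itself) while the $\ge 2$-particle absorption probabilities in \eqref{eq: stationary expectation ort duality} do depend on $\sigma$. Making the decomposition into $\zeta$ (the "interacting cluster", contributing $\widehat\P_\zeta$) and $\xi - \zeta$ (the "independent remainder", contributing $\widehat\P^{\IRW}_{\xi-\zeta}$) come out cleanly, with the correct sign $(-1)^{|\xi|-|\zeta|}$ and no stray cross terms, is the delicate part; the rest is organized substitution. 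The degenerate case $\sigma = 0$, where $D^\cl = D^\oo$ and $\psi \equiv 0$ because $\mu_{\scale_L,\scale_R} = \mu_{\bar{\boldsymbol\scale}}$ makes all centered moments vanish, provides a useful consistency check on the signs.
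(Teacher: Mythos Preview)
Your proposal is correct and follows essentially the same route as the paper. The paper first establishes the more general Theorem~\ref{theorem:main} (via Lemma~\ref{lemma:replacement}, which decomposes the inhomogeneous orthogonal function $D^\oo_{\scale_L,\scale_R,\boldsymbol\beta'}$ into a combination of the homogeneous ones $D^\oo_\scale$, and Lemma~\ref{lemma:duality}, which computes their expectations) and then specializes to $\boldsymbol\beta' = \bar{\boldsymbol\beta}$, $t\to\infty$; your steps (i)--(iv) carry out exactly this specialization directly with the choice $\bubu=0$, $\scale=\scale_R$, and your anticipated ``delicate bookkeeping'' in step~(ii) is in fact clean because each factor splits as $\tfrac{\eta(x)}{\alpha_x}-\bar\scale_x = d^\oo_{x,\scale_R}(1,\eta(x)) - (\scale_L-\scale_R)\widehat p_\infty(\delta_x,\delta_L)$, so the expansion over $\zeta\le\xi$ already yields $D^\oo_{\scale_R}(\zeta,\eta)$ times the desired IRW absorption factors without any recombination step.
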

As an immediate consequence we have the following corollary on the stationary non-equilibrium joint cumulants.
\begin{corollary}[\textsc{joint cumulants}]
	For all $n \in \N$ and $x_1,\ldots, x_n \in V$ with $x_i \neq x_j$ if $i \neq j$, let  $\kappa(\delta_{x_1}+\ldots+\delta_{x_n})$  denote the	 joint cumulant  of the random variables
	\begin{align*}
	\left\{\frac{\eta(x_i)}{\alpha_{x_i}}-\bar \scale_{x_i}  : x_1, \ldots, x_n \in V \right\}\ .
	\end{align*}
	Then, we have
	\begin{align*}
	\kappa(\delta_{x_1}+\cdots+\delta_{x_n}) = \left(\scale_L -\scale_R \right)^n \varphi(\delta_{x_1}+\cdots+\delta_{x_n})\ ,
	\end{align*}
	where $\varphi(\delta_{x_1}+\cdots+\delta_{x_n}) \in \R$ does not depend on neither $\scale_L$ nor $\scale_R$, but only on $\sigma \in \{-1,0,1\}$ and the underlying geometry of the system.
\end{corollary}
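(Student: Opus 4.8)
The plan is to combine Theorem~\ref{theorem:n_correlation} with the classical moment-to-cumulant relation, i.e.\ M\"obius inversion over the lattice of set partitions. Setting $Y_i := \frac{\eta(x_i)}{\alpha_{x_i}} - \bar\scale_{x_i}$ for $i = 1, \ldots, n$, I would start from the identity
\begin{align*}
\kappa(\delta_{x_1}+\cdots+\delta_{x_n}) = \sum_{\pi}\, (-1)^{|\pi|-1}\,(|\pi|-1)!\, \prod_{B\in\pi} \E_{\mu_{\scale_L,\scale_R}}\!\left[\prod_{i\in B} Y_i\right]\ ,
\end{align*}
the sum running over all partitions $\pi$ of $\{1,\ldots,n\}$ into $|\pi|$ blocks.

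The first observation is that, by the definition of $\bar\scale_x$ in \eqref{eq:scalex}, each $Y_i$ is centered, so every partition with a singleton block drops out and only partitions all of whose blocks have size at least $2$ survive (in particular $\varphi$ vanishes for $n=1$). For such a surviving $\pi$, with blocks $B_1, \ldots, B_k$ of sizes $n_1, \ldots, n_k \ge 2$ summing to $n$ --- and note that $n \le |V|$ is automatic since the $x_i$ are distinct, hence each $n_j \le |V|$ --- Theorem~\ref{theorem:n_correlation} applies blockwise and gives
\begin{align*}
\prod_{j=1}^{k} \E_{\mu_{\scale_L,\scale_R}}\!\left[\prod_{i\in B_j} Y_i\right] = \left(\scale_L-\scale_R\right)^{\sum_j n_j} \prod_{j=1}^{k} \psi\!\left(\sum_{i\in B_j}\delta_{x_i}\right) = \left(\scale_L-\scale_R\right)^{n} \prod_{j=1}^{k} \psi\!\left(\sum_{i\in B_j}\delta_{x_i}\right)\ .
\end{align*}

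Factoring $(\scale_L-\scale_R)^n$ out of the whole sum then yields the claim with
\begin{align*}
\varphi(\delta_{x_1}+\cdots+\delta_{x_n}) := \sum_{\substack{\pi:\ \text{every block of }\pi\\ \text{has size}\ \ge 2}}\, (-1)^{|\pi|-1}\,(|\pi|-1)!\, \prod_{B\in\pi}\psi\!\left(\sum_{i\in B}\delta_{x_i}\right)\ ,
\end{align*}
a finite real number; and since, by Theorem~\ref{theorem:n_correlation}, each $\psi(\sum_{i\in B}\delta_{x_i})$ depends only on $\sigma$ and the underlying geometry, so does $\varphi$. There is no genuine obstacle in this argument --- it is purely combinatorial once Theorem~\ref{theorem:n_correlation} is available --- and the only bookkeeping point to verify carefully is that the exponents of $(\scale_L-\scale_R)$ contributed by the blocks of a surviving partition always add up to exactly $n$, which is immediate from $\sum_j n_j = n$, so that the residual coefficient is indeed independent of the reservoir parameters.
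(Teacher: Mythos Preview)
Your argument is correct and follows essentially the same route as the paper: both start from the moment--cumulant (M\"obius) identity over set partitions, apply Theorem~\ref{theorem:n_correlation} blockwise, and factor out $(\scale_L-\scale_R)^n$. The only cosmetic difference is that you first discard partitions containing a singleton block via the centering $\E_{\mu_{\scale_L,\scale_R}}[Y_i]=0$, whereas the paper keeps all partitions and lets those terms vanish through $\psi(\delta_x)=0$; the resulting $\varphi$ is the same.
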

\begin{proof}
	After recalling that
	\begin{align*}
	\kappa(\delta_{x_1}+\cdots+\delta_{x_n}) = \sum_{\gamma \in \mathcal T} (|\gamma|-1)!\, (-1)^{|\gamma|-1} \prod_{U \in \gamma} \E_{\mu_{\scale_L,\scale_R}}\left[\prod_{y \in U} \left( \frac{\eta(y)}{\alpha_y}-\bar \scale_y \right)\right]\ ,
	\end{align*}
	where $\mathcal T=\mathcal T(\{x_1,\ldots, x_n\})$ denotes the set of partitions of $\{x_1,\ldots, x_n\} \subset V$, the result follows by \eqref{eq:psi} with $\varphi(\{x_1,\ldots, x_n\})$ given by
	\begin{align*}
	\varphi(\delta_{x_1}+\cdots+\delta_{x_n}) = \sum_{\gamma \in \mathcal T} (|\gamma|-1)!\, (-1)^{|\gamma|-1} \prod_{U \in \gamma} \psi(U) \ ,
	\end{align*}
	where  $\psi(U):= \psi(\sum_{x \in U}\delta_x)$.	\end{proof}
\subsubsection{Properties of the function $\psi$}
We collect below some further properties of the coefficients $\psi$ in \eqref{eq:psi}:
\begin{enumerate}[label={\normalfont (\alph*)}]
	\item \label{it:main_2}For all $\sigma \in \{-1,0,1\}$, if $\abs{\xi}=0$, i.e. the dual  configuration is empty, then $\psi(\xi)=1$.
	\item \label{it:main_1}For $\sigma =0$, $\psi(\xi)=0$ for all $\xi \in \widehat {\mathcal X}$ such that $|\xi|\ge 1$.
	\item \label{it:main_2}For all $\sigma \in \{-1,0,1\}$ and for all $x \in V$, $\psi(\delta_x)=0$.
	\item \label{it:main_3}If $\sigma \in \{-1,1\}$ and $\scale_L \neq \scale_R$,  as a consequence of Theorem \ref{theorem:sign_two_point} and  $\left(\scale_L-\scale_R\right)^2 >0$, $\psi(\delta_x+\delta_y)$ is negative for $\sigma = -1$ and positive for $\sigma = 1$ for all $x, y \in V$.
	\item \label{it:main_4}Because $\psi(\delta_{x_1}+\cdots+\delta_{x_n})$ depends only on the underlying geometry of the system and not on $\scale_L, \scale_R$, exchanging the role of $\scale_L$ and $\scale_R$ does not affect the value of the stationary $n$-point correlation functions if $n \in \N$ is even, while it involves only a change of sign if $n \in \N$ is odd. More precisely, for all $n \in \N$ and $x_1,\ldots, x_n \in V$,
	\begin{align*}
	\E_{\mu_{\scale_L,\scale_R}}\left[\prod_{i=1}^n\left(\frac{\eta(x_i)}{\alpha_{x_i}}  -\bar \scale_{x_i}\right)\right]
	= \left(-1\right)^n\,
	\E_{\mu_{\scale_R,\scale_L}}\left[\prod_{i=1}^n\left(\frac{\eta(x_i)}{\alpha_{x_i}} -  \bar \scale_{x_i}\right)\right]\ .
	\end{align*}
		\item \label{it:main_5}As we will see in the course of the next section \ref{section:correlations_time_t}, $\psi(\xi)$   in \eqref{eq:psi}--\eqref{eq:psi_explicit} can be defined for any $\xi \in \widehat{\mathcal X}$ and equivalently expressed  in terms of a parameter $\bubu\in \R$. More precisely, given $\xi \in\widehat{\mathcal X}$ and $\bubu\in\R$, we have
	\begin{align}\label{eq:psibeta}
	\psi(\xi)=\sum_{\zeta\le \xi}   (-1)^{\abs{\xi}-\abs{\zeta}}\left (\prod_{x\in V} \binom{\xi(x)}{\zeta(x)} (\widehat p_{\infty}(\delta_x,\delta_L)-\bubu)^{\xi(x)-\zeta(x)} \right)\widehat \E_\zeta\left[  (1-\bubu)^{\zeta_\infty(L)}   (-\bubu)^{\zeta_\infty(R)}    \right]\ .
	\end{align}
	
	Notice that, by setting
	$\xi=\delta_{x_1}+\cdots+\delta_{x_n}$  with $x_i \neq x_j$ if $i \neq j$, all the binomial coefficients in \eqref{eq:psibeta} are equal to one. The choice $\bubu=0$ corresponds then to the expression on the l.h.s. of \eqref{eq:psi}, while choosing $\bubu=1$ leads to
	
	\begin{equation}\label{eq:psi2}
	\psi(\xi)=\sum_{ \zeta\le \xi}(-1)^{|\zeta|}\ \widehat \P_{\xi-\zeta}^{\IRW}((\xi-\zeta)_\infty(R)=\abs{\xi-\zeta})\, \widehat \P_\zeta (\zeta_\infty(R)=\abs{\zeta})\ .
	\end{equation}
	In particular, since $\psi(\xi)$ does not depend on $\bubu$, we have that
	\begin{equation}\label{eq: derivative of psi}
	\frac{\text{d}\psi(\xi)}{\text{d} \bubu}  = 0\ 	,
	\end{equation}
	which is an equation giving information on the absorption probabilities.
	If we consider, for instance, the case $\xi=\delta_x+\delta_y$ with $x\neq y$,  \eqref{eq:psibeta} and \eqref{eq: derivative of psi} yield	
	\begin{align}
	2\,\widehat \P_{\xi=\delta_x+\delta_y}(\xi_\infty(L) =2)+\widehat \P_{\xi=\delta_x+\delta_y}(\xi_\infty(L) =1)=\widehat p_{\infty}(\delta_x,\delta_L)+\widehat p_{\infty}(\delta_y,\delta_L)\ ,
	\end{align}
	which corresponds to the recursive relation found in \cite[Proposition 5.1]{carinci_consistent_2019}.
	More generally, by matching the two expressions of $\psi(\xi)$ for $\xi=\delta_{x_1}+\cdots+\delta_{x_n}$  with $x_i \neq x_j$ if $i \neq j$, in \eqref{eq:psi_explicit} and \eqref{eq:psi2},  the relation that we find is
	\begin{align*}
	&\ \widehat \Pr_\xi\left(\xi_\infty(L)=|\xi| \right) - \left(-1\right)^{|\xi|}\widehat \Pr_\xi\left(\xi_\infty(R)=|\xi| \right)\\[.15cm]
	&= \sum_{\zeta< \xi}\left\{\begin{array}{l}   \widehat \P_{\xi-\zeta}^{\IRW}((\xi-\zeta)_\infty(R)=\abs{\xi-\zeta})\, \widehat \P_\zeta (\zeta_\infty(R)=\abs{\zeta})\\[.2cm]
	-(-1)^{|\xi|}\ \widehat \P_{\xi-\zeta}^{\IRW}((\xi-\zeta)_\infty(L)=\abs{\xi-\zeta})\,	 \widehat \P_\zeta (\zeta_\infty(L)=\abs{\zeta})\ \end{array}\right\}\ .
	\end{align*}
	In other words, the above equation relates the probabilities of having all $|\xi|$ dual particles absorbed at the same end with a linear combination of analogous probabilities for systems with a strictly smaller number of particles.

\end{enumerate}

\

\subsection{Correlations at finite times and proof of Theorem \ref{theorem:n_correlation}}\label{section:correlations_time_t}

Theorem \ref{theorem:n_correlation} follows from a more general result. This is the content of Theorem \ref{theorem:main} below. There, we show that a decomposition reminiscent of that in \eqref{eq:psi} holds also for expectations at some fixed positive time of generalizations of the $n$-point correlation functions of Theorem \ref{theorem:n_correlation} when the particle system starts from a suitable product measure.
The aforementioned generalizations of the correlation functions are constructed by suitably recombining the orthogonal duality functions of Section \ref{section:orthogonal_duality} so to obtain a family of  functions   orthogonal w.r.t.\ what we call \textquotedblleft interpolating product measures\textquotedblright\   given in the following definition.

\begin{definition}[\textsc{interpolating product measures}]
	We call  \emph{interpolating product measure}  with interpolating parameters
	\begin{equation}\label{eq:betaN}
	\boldsymbol \beta= \{\beta_x: x \in V \}
	\end{equation} the measure given by
	\begin{align}\label{eq:muLRbeta}
	\mu_{\scale_L,\scale_R,\boldsymbol \beta} := \otimes_{x \in V}\, \nu_{x,\scale_x}\ ,
	\end{align}
	with
	\begin{align}\label{eq:scalexx}
	\scale_x:=\scale_R+\beta_x(\scale_L-\scale_R)\,\ ,
	\end{align}
	where the marginals  $\{\nu_{x, \scale}: x \in V\}$ appearing in \eqref{eq:muLRbeta} are those given in \eqref{eq:marginals} and $\boldsymbol \beta $ in \eqref{eq:betaN}--\eqref{eq:scalexx} is chosen such that, for each choice of $\sigma \in \{-1,0,1\}$, the product measure $\mu_{\scale_L,\scale_R,\boldsymbol \beta}$ is a probability measure, i.e., for all $x \in V$, the following conditions hold:
	\begin{align}\label{eq:conditions_beta}
	\beta_x \in \R\qquad \text{and}\qquad \scale_x = \scale_R + \beta_x(\scale_L-\scale_R) \in \Scale\ .
	\end{align}
\end{definition}
In particular, if we choose
\begin{align*}
\beta_x= \widehat p_\infty(\delta_x,\delta_L)=: \bar \beta_x\ ,\quad x \in V\ ,
\end{align*}
as corresponding interpolating product measure we recover the local equilibrium product measure $\mu_{\bar {\boldsymbol \scale}}$ (Definition \ref{def: local equilibrium product measure}):
\begin{align}\label{eq:interpolating_local_equilibrium_product}
\mu_{\scale_L,\scale_R,\bar {\boldsymbol\beta}}= \mu_{\bar{\boldsymbol \scale}}\ .	
\end{align}

Let us now introduce what we call the \textquotedblleft interpolating orthogonal functions\textquotedblright.	
\begin{definition}[\textsc{interpolating orthogonal functions}]
	Recalling the definition of orthogonal polynomial dualities in \eqref{eq:single-site_orthogonal}--\eqref{eq:single-site_orthogonal_LR} and the definition of interpolating parameters $\boldsymbol \beta$ in \eqref{eq:betaN}, we define the \emph{interpolating orthogonal function} with interpolating parameters $\boldsymbol \beta$  as follows:
	\begin{align}\label{eq:duality_improper}
	D^\oo_{\scale_L,\scale_R,\boldsymbol \beta}(\xi,\eta) := d^\oo_{L,\scale_L}(\xi(L)) \times \left(\prod_{x \in V} d^\oo_{x, \scale_x}(\xi(x),\eta(x))\right) \times d^\oo_{R,\scale_R}(\xi(R))\ ,
	\end{align}
	where the parameters $\{\scale_x: x \in V \}$ are defined in terms of $\scale_L$, $\scale_R$ and $\boldsymbol \beta$ as in \eqref{eq:scalexx}.
\end{definition}
In analogy with \eqref{eq:interpolating_local_equilibrium_product}, we define
\begin{align}
D^\oo_{\bar{\boldsymbol \scale}}(\xi,\eta) := D^\oo_{\scale_L,\scale_R,\bar{\boldsymbol \beta}}(\xi,\eta)
=d^\oo_{L,\scale_L}(\xi(L)) \times \left(\prod_{x \in V} d^\oo_{x,\bar \scale_x}(\xi(x),\eta(x))\right) \times d^\oo_{R,\scale_R}(\xi(R))\ .
\end{align}

\begin{remark}
	We note that, despite the analogy in  notation, in general these functions are \emph{not} duality functions for the particle system $\{\eta_t: t \geq 0 \}$, unless we assume the system to be at equilibrium, i.e.\ $\scale_L = \scale_R = \scale \in \Scale$. Only in  the latter case, $D^\oo_{\scale_L,\scale_R,\boldsymbol \beta}(\xi,\eta) = D^\oo_\scale(\xi,\eta)$ for all choices of $\boldsymbol \beta$.	
\end{remark}

With the definition \eqref{eq:duality_improper},  we have (cf.\ Remark \ref{remark:orthogonal}) that
\begin{align}\label{eq:zero_boundary}
D^\oo_{\scale_L,\scale_R,\boldsymbol \beta}(\xi,\cdot) = 0\ , \qquad \text{if}\quad \xi \in \widehat{\mathcal X}\setminus  \widehat{\mathcal Y}\ ,	
\end{align}
and that the family of functions
\begin{align*}
\left\{D^\oo_{\scale_L,\scale_R,\boldsymbol \beta}(\xi,\cdot) : \xi \in \widehat{\mathcal Y} \right\}
\end{align*}
is an orthogonal basis in $L^2(\mathcal X,  \mu_{\scale_L,\scale_R,\boldsymbol \beta})$. Now we are ready to state the main result of this section, whose Theorem \ref{theorem:n_correlation} is a particular instance.

\begin{theorem}\label{theorem:main} Let us consider two set of interpolating parameters
	\begin{align*}
	\boldsymbol \beta = \left\{\beta_x: x \in V\right\}\quad \text{and}\quad \boldsymbol \beta'=\left\{\beta_x': x \in V \right\}
	\end{align*}
	both satisfying \eqref{eq:scalexx}. Then,
	for all $\xi \in \widehat{\mathcal Y} \subset \widehat {\mathcal X}$ and  $t \geq 0$, we have
	\begin{align}\label{eq:psi_t beta N}
	\E_{\mu_{\scale_L,\scale_R,\boldsymbol \beta}}\left[D^\oo_{\scale_L,\scale_R,\boldsymbol \beta'}(\xi,\eta_t) \right] = \left(\scale_L-\scale_R\right)^{|\xi|} \psi_{t,\boldsymbol \beta,\boldsymbol \beta'}(\xi)\ ,
	\end{align}
	where
	\begin{align}\label{eq:psi_general}
	\psi_{t,\boldsymbol \beta,\boldsymbol \beta'}(\xi)
	:=\sum_{\zeta \le \xi}  \left(-1\right)^{|\xi|-|\zeta|}
	\left( \prod_{x \in V} \binom{\xi(x)}{\zeta(x)} (\beta_x')^{\xi(x)-\zeta(x)}\  \widehat \E_\zeta\left[ \boldsymbol 1_{\{\zeta_t(R)=0\}}\left( \prod_{x \in V} (\beta_x)^{\zeta_t(x)} \right)\right]\right)  \ ,
	\end{align}
	and  $\psi_{t,\boldsymbol \beta,\boldsymbol \beta'}(\xi)$ does not depend on neither $\scale_L$ nor $\scale_R$, but only on $\boldsymbol \beta$, $\boldsymbol \beta'$, $\sigma \in \{-1,0,1\}$ and the underlying geometry of the system. Moreover, by sending $t$ to infinity in \eqref{eq:psi_t beta N} we obtain, for all $\xi \in \widehat{\mathcal Y} \subset \widehat{\mathcal X}$,
	\begin{align}\label{eq:psi_infty_beta}
	\E_{\mu_{\scale_L,\scale_R}}\left[D^\oo_{\scale_L,\scale_R,\boldsymbol \beta'}(\xi,\eta) \right] = \left(\scale_L-\scale_R \right)^{|\xi|} \psi_{\boldsymbol \beta'}(\xi)\ ,
	\end{align}
	where
	\begin{align}\label{eq:psi_infty_beta2}
	\psi_{\boldsymbol \beta'}(\xi):= \sum_{\zeta \leq \xi} \left(-1 \right)^{|\xi|-|\zeta|} \left( \prod_{x \in V} \binom{\xi(x)}{\zeta(x)} (\beta_x')^{\xi(x)-\zeta(x)} \right)\widehat \P_\zeta\left[ \zeta_\infty(L)=\abs{\zeta} \right] \ .
	\end{align}
	Again, $\psi_{\boldsymbol \beta'}(\xi)$ is independent of $\scale_L$ and $\scale_R$.
\end{theorem}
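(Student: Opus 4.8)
The strategy is to exploit orthogonal duality (Theorem \ref{proposition:orthogonal_duality}) to convert the left-hand side of \eqref{eq:psi_t beta N} into an expectation over the absorbing dual process, and then to expand the ``interpolating'' single-site factors in terms of the pure orthogonal ones. First I would expand $D^\oo_{\scale_L,\scale_R,\boldsymbol\beta'}(\xi,\eta)$ by writing each bulk factor $d^\oo_{x,\scale_x}(k,n)$, with $\scale_x = \scale_R + \beta_x'(\scale_L-\scale_R)$, in terms of $d^\oo_{x,\scale}(k,n)$ for a \emph{single} reference parameter $\scale$ (e.g. $\scale=\scale_R$). Using the Newton-binomial-type identity \eqref{eq:fake_binomial}, $d^\oo_{x,\scale_x}(k,n)=\sum_{\ell=0}^k\binom{k}{\ell}d^\cl_x(\ell,n)(-\scale_x)^{k-\ell}$, and re-expanding $d^\cl_x(\ell,n)=\sum_{m}\binom{\ell}{m}d^\oo_{x,\scale}(m,n)\scale^{\ell-m}$, one finds that $d^\oo_{x,\scale_x}(k,n)=\sum_{m=0}^k\binom{k}{m}(\scale-\scale_x)^{k-m}d^\oo_{x,\scale}(m,n)$. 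With $\scale=\scale_R$ this gives $\scale-\scale_x=-\beta_x'(\scale_L-\scale_R)$, so $D^\oo_{\scale_L,\scale_R,\boldsymbol\beta'}(\xi,\cdot)$ is a finite linear combination of genuine orthogonal duality functions $D^\oo_{\scale_R}(\zeta,\cdot)$ with $\zeta\le\xi$, the coefficients being $(-1)^{|\xi|-|\zeta|}(\scale_L-\scale_R)^{|\xi|-|\zeta|}\prod_x\binom{\xi(x)}{\zeta(x)}(\beta_x')^{\xi(x)-\zeta(x)}$ (taking care of the $L$- and $R$-factors, where $\beta_L'=1$, $\beta_R'=0$, so only $\zeta(L)=\xi(L)$, $\zeta(R)=\xi(R)$ survive — consistent with $\xi\in\widehat{\mathcal Y}$).

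**Pushing through duality.** Next I would apply the orthogonal duality relation \eqref{eq:duality_relation_expectations} term by term: $\E_{\mu_{\scale_L,\scale_R,\boldsymbol\beta}}[D^\oo_{\scale_R}(\zeta,\eta_t)]$. Here the subtlety is that the initial law $\mu_{\scale_L,\scale_R,\boldsymbol\beta}$ is \emph{not} the equilibrium measure $\mu_{\scale_R}$, so the clean orthogonality trick from the earlier Proposition does not apply directly. However, $\mu_{\scale_L,\scale_R,\boldsymbol\beta}=\otimes_x\nu_{x,\scale_x}$ with $\scale_x=\scale_R+\beta_x(\scale_L-\scale_R)$ is still a product measure, and one can again use \eqref{eq:fake_binomial} the other way: express $D^\oo_{\scale_R}(\zeta,\cdot)$ against $\mu_{\scale_L,\scale_R,\boldsymbol\beta}$ via the fundamental relation \eqref{eq:fundamental_relation}, namely $\sum_n d^\oo_{x,\scale_R}(k,n)\nu_{x,\scale_x}(n)$. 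Writing $d^\oo_{x,\scale_R}(k,n)=\sum_\ell\binom{k}{\ell}(\scale_x-\scale_R)^{k-\ell}d^\oo_{x,\scale_x}(\ell,n)$ and using that $\sum_n d^\oo_{x,\scale_x}(\ell,n)\nu_{x,\scale_x}(n)=\1_{\{\ell=0\}}$ (orthogonality, since $d^\oo_{x,\scale_x}(0,\cdot)\equiv 1$), I get $\sum_n d^\oo_{x,\scale_R}(k,n)\nu_{x,\scale_x}(n)=(\scale_x-\scale_R)^k=(\beta_x(\scale_L-\scale_R))^k$. Applying this after the dual process has run for time $t$: $\E_{\mu_{\scale_L,\scale_R,\boldsymbol\beta}}[D^\oo_{\scale_R}(\zeta,\eta_t)] = \widehat\E_\zeta\big[\prod_{x\in V}(\beta_x(\scale_L-\scale_R))^{\zeta_t(x)}\cdot(\scale_L-\scale_R)^{\zeta_t(L)}\cdot 0^{\zeta_t(R)}\big]$, where the $L$-factor is $d^\oo_{L,\scale_R}(\zeta_t(L))=(\scale_L-\scale_R)^{\zeta_t(L)}$ and the $R$-factor is $d^\oo_{R,\scale_R}(\zeta_t(R))=(\scale_R-\scale_R)^{\zeta_t(R)}=\1_{\{\zeta_t(R)=0\}}$. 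Pulling out $(\scale_L-\scale_R)^{|\zeta|}$ (total particle number is conserved) leaves exactly $(\scale_L-\scale_R)^{|\zeta|}\widehat\E_\zeta[\1_{\{\zeta_t(R)=0\}}\prod_{x\in V}\beta_x^{\zeta_t(x)}]$.

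**Assembling and taking limits.** Combining the two expansions, the power of $(\scale_L-\scale_R)$ coming from the coefficients ($|\xi|-|\zeta|$) and from the duality computation ($|\zeta|$) sum to $|\xi|$, which is the claimed prefactor; the residual sum over $\zeta\le\xi$ with the stated combinatorial coefficients and dual expectation is precisely \eqref{eq:psi_general}, and it manifestly depends only on $\boldsymbol\beta,\boldsymbol\beta',\sigma$ and the underlying geometry (the dual dynamics is $\scale_L,\scale_R$-independent, as recorded in Section \ref{section:classical_duality}). For the $t\to\infty$ statement \eqref{eq:psi_infty_beta}–\eqref{eq:psi_infty_beta2}, I would invoke uniqueness of $\mu_{\scale_L,\scale_R}$ (Theorem \ref{theorem:existence_uniqueness}) so the left side converges to $\E_{\mu_{\scale_L,\scale_R}}[D^\oo_{\scale_L,\scale_R,\boldsymbol\beta'}(\xi,\eta)]$, and on the dual side all particles are eventually absorbed at $L\cup R$, so $\zeta_t(x)\to 0$ for $x\in V$ and $\1_{\{\zeta_t(R)=0\}}\to\1_{\{\zeta_\infty(L)=|\zeta|\}}$; only the $\zeta_t(x)=0$ term $\prod_x\beta_x^0=1$ survives the product, giving $\widehat\P_\zeta[\zeta_\infty(L)=|\zeta|]$ and hence \eqref{eq:psi_infty_beta2}. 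Finally, Theorem \ref{theorem:n_correlation} follows by specializing $\boldsymbol\beta=\boldsymbol\beta'=\bar{\boldsymbol\beta}$ with $\bar\beta_x=\widehat p_\infty(\delta_x,\delta_L)$, so that $\mu_{\scale_L,\scale_R,\bar{\boldsymbol\beta}}=\mu_{\bar{\boldsymbol\scale}}$ interpolates correctly and $D^\oo_{\bar{\boldsymbol\scale}}(\xi,\cdot)$ reduces on single sites $\{x_i\}$ to the centered occupation variables $\frac{\eta(x_i)}{\alpha_{x_i}}-\bar\scale_{x_i}$ (the degree-one orthogonal polynomial), after verifying \eqref{eq:psibeta} matches \eqref{eq:psi_explicit} via the $\bubu=0$ identification and a further expansion of the $\widehat\E_\zeta$ term into interacting vs.\ independent dual absorption (this last rewriting, essentially a consistency/factorization identity for the dual, is where a little more care is needed).

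**Main obstacle.** The delicate point is the bookkeeping of parameters: one must consistently track that the \emph{left} factor of $D^\oo$ carries parameter $\scale_L$, the \emph{right} factor $\scale_R$, and each bulk factor $\scale_x$, and that re-expansion via \eqref{eq:fake_binomial} is performed around the \emph{same} reference $\scale_R$ on both the observable side and the measure side so that the two binomial expansions telescope correctly and all powers of $(\scale_L-\scale_R)$ collect into a single $|\xi|$-th power. A secondary subtlety is justifying that the finite-sum manipulations and the $t\to\infty$ interchange of limit and (finite) expectation are legitimate — this is immediate for $\sigma=-1$ (finite state space) and for $\sigma\in\{0,1\}$ follows from conservation of the total particle number $|\zeta|$ in the dual, which keeps every sum and expectation finite; I would note this explicitly rather than belabor it.
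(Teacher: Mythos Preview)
Your proposal is correct and follows essentially the same route as the paper: the paper's proof is packaged into two lemmas (Lemma~\ref{lemma:replacement} expands $D^\oo_{\scale_L,\scale_R,\boldsymbol\beta'}$ as a linear combination of genuine orthogonal dualities $D^\oo_\scale$, and Lemma~\ref{lemma:duality} computes $\E_{\mu_{\scale_L,\scale_R,\boldsymbol\beta}}[D^\oo_\scale(\zeta,\eta_t)]$ via duality and the identity $\sum_n d^\oo_{x,\scale}(k,n)\nu_{x,\scale_x}(n)=(\scale_x-\scale)^k$), which is exactly your two-step plan. The only cosmetic differences are that the paper derives the re-expansion via the annihilation-operator identity $D^\oo_\scale=(e^{-\scale\widehat{\mathcal K}})_\eft D^\cl$ rather than your direct binomial manipulation of \eqref{eq:fake_binomial}, and that the paper carries a free reference parameter $\scale=\scale_R+\bubu(\scale_L-\scale_R)$ throughout (observing the final answer is $\bubu$-independent before setting $\bubu=0$), whereas you fix $\scale=\scale_R$ from the outset.
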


\begin{remark}
	From the proof of Theorem \ref{theorem:main}, the results of the theorem extend to configurations $\xi \in \widehat {\mathcal X}\setminus \widehat{\mathcal Y}$ and,  by \eqref{eq:zero_boundary}, 	
	\begin{align}\label{eq:zero_boundary_psi}
	\psi_{t,\boldsymbol \beta,\boldsymbol \beta'}(\xi)=0\ ,\qquad \text{if}\quad \xi \in \widehat {\mathcal X}\setminus \widehat{\mathcal Y}\ .
	\end{align}
\end{remark}

Before moving to the next section, Section \ref{section:proof_main}, in which we provide the proof of Theorem \ref{theorem:main}, we show how this latter result implies Theorem \ref{theorem:n_correlation}.
\begin{proof}[Proof of Theorem~\ref{theorem:n_correlation}]
	Recall that,   by the definitions of hypergeometric functions \eqref{eq:hypergeometric_1}--\eqref{eq:hypergeometric_2} and of single-site orthogonal duality functions in \eqref{eq:single-site_orthogonal},   we have, for all $n \in \N$ and $\eta \in \mathcal X$,
	\begin{align}\label{eq:correlation_improper}
	D^\oo_{\bar{\boldsymbol \scale}}(\delta_{x_1}+\cdots+\delta_{x_n},\eta) = \prod_{i=1}^n \left(\frac{\eta(x_i)}{\alpha_{x_i}}-\bar \scale_i \right)
	\end{align}
	anytime $x_1,\ldots, x_n \in V$ with $x_i \neq x_j$ if $i \neq j$.
	By choosing for any $x\in V$, $\beta'_x=\widehat p_\infty(\delta_x,\delta_L)$, the result follows immediately from Theorem \ref{theorem:main}.
\end{proof}

\subsubsection{Probabilistic interpretation of the function $\psi$ }
Theorem \ref{theorem:n_correlation} may be seen as a particular instance of Theorem \ref{theorem:main} with the choice $t =\infty$, $\xi \in \widehat{\mathcal X}$ consisting of finitely many particles all sitting at different sites in the bulk and $\beta'_x=\widehat p_\infty(\delta_x,\delta_L)$  for every $x\in V$.  In fact, Theorem \ref{theorem:main} extends the relation \eqref{eq:psi} to all $\xi \in \widehat{\mathcal X}$, i.e.\
\begin{align}
\E_{\mu_{\scale_L,\scale_R}}\left[D^\oo_{\bar{\boldsymbol \scale}}(\xi,\eta) \right]= \left(\scale_L-\scale_R \right)^{|\xi|} \psi(\xi)\ ,
\end{align}
with, 
\begin{align}\label{eq: psi infinity general}
\psi(\xi):= \sum_{ \zeta\le \xi} \binom{\xi}{\zeta}\,   (-1)^{|\xi|-|\zeta|}\ \widehat \P_{\xi-\zeta}^{\IRW}((\xi-\zeta)_\infty(L)=\abs{\xi-\zeta})\, \widehat \P_\zeta\left( \zeta_\infty(L)=\abs{\zeta} \right) \ ,
\end{align}
where $\binom{\xi}{\zeta}:= \prod_{x \in V} \binom{\xi(x)}{\zeta(x)}$ and $\widehat \Pr^\IRW$ refers to the law  of  the dual process for $\sigma =0$, consisting of non-interacting random walks. 

In order to obtain a more probabilistic interpretation of \eqref{eq: psi infinity general}, we define

\begin{enumerate}[label={\normalfont (\alph*)}]
	\item the probability measure  $\gamma_\xi$   on $ \widehat {\mathcal X}$ given by 
	\begin{align}\label{eq:uniform_measure}
	\gamma_\xi(\zeta) =   \frac{\binom{\xi}{\zeta}}{2^{\abs{\xi}}}\, \1_{\{\zeta \leq \xi\}},\ 
	\end{align}
	i.e. the distribution of  uniformly chosen  sub-configuration  of $\xi$ (i.e. $\zeta\le \xi$);
	\item the function $\Psi_\xi:\widehat {\mathcal X}\to \R$ given by
	$$\Psi_\xi(\zeta):=\1_{\{\zeta \leq \xi\}} \ \widehat \P_{\xi-\zeta}^{\IRW}((\xi-\zeta)_\infty(L)=\abs{\xi-\zeta})\, \widehat \P_\zeta\left( \zeta_\infty(L)=\abs{\zeta} \right) \ ,$$
	i.e., the function that assigns to any  $\zeta\le \xi$ the probability that, in a system composed by the superposition of the configuration $\zeta$ of \emph{interacting dual particles} and the configuration $\xi-\zeta$ of \emph{independent dual random walks}, 
	independent between each other, all the particles are eventually absorbed at $L$.
\end{enumerate}
The function $\psi(\xi)$ in \eqref{eq: psi infinity general} can, then, be rewritten as follows:

$$ \psi(\xi)=2^{\abs{\xi}}\sum_{\zeta \in \widehat{\mathcal X}}  (-1)^{\abs{\xi-\zeta}}\ \Psi_\xi(\zeta)\, \gamma_\xi(\zeta)\ 	.$$

Similarly, for all $t \geq 0$, $\xi \in \widehat{\mathcal X}$ and for the special choice
\begin{equation*}
\boldsymbol \beta = \boldsymbol \beta'\quad \text{and}\quad \beta_x=\beta_x'=\widehat p_\infty(\delta_x,\delta_L)\ ,
\end{equation*}
the identity in \eqref{eq:psi_t beta N} yields, as a particular case, 
\begin{align}\label{eq:psi_t5}
\E_{\mu_{\bar{\boldsymbol \scale}}}\left[D^\oo_{\bar{\boldsymbol \scale}}(\xi,\eta_t) \right]=\left(\scale_L-\scale_R \right)^{|\xi|} \psi_{t}(\xi)\ ,
\end{align}
where
\begin{align}\label{eq:psi_t}
	\nonumber
\ 	\psi_t(\xi):=&\ \sum_{ \zeta\le \xi}(-1)^{\xi-\zeta}\, \widehat \P_{\xi-\zeta}^{\IRW}((|\xi|-|\zeta|)_\infty(L)=\abs{\xi-\zeta})\, \widehat \E_\zeta\left[\widehat\P_{\zeta_t}^{\text{IRW}} (\zeta_\infty(L)=\abs{\zeta})\right]\\
=&\ 2^{\abs{\xi}}\,  \sum_{\zeta \in\widehat{\mathcal X}} (-1)^{|\xi|-|\zeta|}  \Psi_{t,\xi}(\zeta)\, 	\gamma_\xi(\zeta)\     ,
\end{align}
where the integral in the last identity is w.r.t.\ the probability measure $\gamma_\xi$ defined in \eqref{eq:uniform_measure} and 
$$\Psi_{t,\xi}(\zeta):=\1_{\{\zeta \leq \xi\}}\widehat \P_{\xi-\zeta}^{\IRW}((\xi-\zeta)_\infty(L)=|\xi|-|\zeta|)\, \widehat \E_\zeta\left[\widehat\P_{\zeta_t}^{\text{IRW}} (\zeta_\infty(L)=|\zeta|)\right].$$

\subsection{Proof or Theorem \ref{theorem:main}}\label{section:proof_main}

We prove Theorem \ref{theorem:main} in two steps.

First we obtain a formula to relate the functions $D^\oo_{\scale_L,\scale_R,\boldsymbol \beta'}(\xi,\eta)$ in \eqref{eq:duality_improper} appearing in the statement of Proposition \ref{theorem:main} to the orthogonal duality functions $D^\oo_\scale(\xi,\eta)$ in Section \ref{section:orthogonal_duality}, for some $\scale \in \Scale$.

\begin{lemma}\label{lemma:replacement} For each choice of $\sigma \in \{-1,0,1\}$ and $\bubu\in \R$,  we define
	\begin{equation}\label{eq: scale beta}
	\scale:=\scale_R+\bubu(\scale_L-\scale_R)\  .
	\end{equation}
	Then,  for all configurations $\eta \in \mathcal X$ and $\xi \in \widehat {\mathcal X}$,	
	\begin{align*}
	D^\oo_{\scale_L,\scale_R,\boldsymbol \beta'}(\xi,\eta) =&\ \sum_{\zeta \leq \xi} \left(\scale_L-\scale_R\right)^{|\xi|-|\zeta|} \left(-1\right)^{|\xi|-|\zeta|} E_{\boldsymbol \beta',\bubu}(\zeta,\xi)\, D^\oo_\scale(\zeta,\eta)\ ,
	\end{align*}
	where  $E_{\boldsymbol \beta',\bubu}(\zeta,\xi)$ is defined as
	\begin{align}\label{eq:E}
	E_{\boldsymbol \beta',\bubu}(\zeta,\xi)	:=  E_{L,\bubu}(\zeta(L),\xi(L)) \times \left( \prod_{x \in V} E_{x,\beta_x',\bubu}(\zeta(x),\xi(x))\right) \times E_{R,\bubu}(\zeta(R),\xi(R))\ ,
	\end{align}
	where, for all $x \in V$,
	\begin{align*}
	E_{x,\beta_x',\bubu}(\ell,k) :=&\ \binom{k}{\ell} (\beta_x'-\bubu)^{k-\ell} \1_{\{\ell \leq k \}}\ ,
	\end{align*}
	and
	\begin{align*}
	E_{L,\bubu}(\ell,k) :=&\ \binom{k}{\ell} (1-\bubu)^{k-\ell} \1_{\{\ell \leq k \}}\\
	E_{R,\bubu}(\ell,k) :=&\ \binom{k}{\ell} (-\bubu)^{k-\ell} \1_{\{\ell \leq k \}}\ .
	\end{align*}
\end{lemma}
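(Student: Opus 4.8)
The plan is to prove Lemma~\ref{lemma:replacement} by a direct expansion, exploiting the joint-factorized form of $D^\oo_{\scale_L,\scale_R,\boldsymbol \beta'}$ over the sites $\{L\}\cup V\cup\{R\}$. The key observation is that each single-site factor of $D^\oo_{\scale_L,\scale_R,\boldsymbol \beta'}(\xi,\eta)$ can be written as a finite linear combination of the corresponding single-site factors of $D^\oo_\scale(\cdot,\eta)$, with binomial coefficients. Concretely, for the bulk factors we use the binomial-type identity \eqref{eq:fake_binomial} that expresses $d^\oo_{x,\scale}(k,n)$ as a $\binom{k}{\ell}$-weighted sum of classical duality functions; equivalently, comparing the two scale parameters $\scale_x=\scale_R+\beta_x'(\scale_L-\scale_R)$ and $\scale=\scale_R+\bubu(\scale_L-\scale_R)$, one has $\scale_x-\scale=(\beta_x'-\bubu)(\scale_L-\scale_R)$, and applying the translation operator $e^{-(\scale_x-\scale)\widehat{\mathcal K}_x}$ to $d^\oo_{x,\scale}(\cdot,n)$ gives $d^\oo_{x,\scale_x}(k,n)=\sum_{\ell=0}^{k}\binom{k}{\ell}(-(\scale_x-\scale))^{k-\ell} d^\oo_{x,\scale}(\ell,n)$. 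For the boundary factors, $d^\oo_{L,\scale_L}(k)=(\scale_L-\scale)^k=((\scale_L-\scale_R)(1-\bubu))^k$ and similarly $d^\oo_{R,\scale_R}(k)=((\scale_L-\scale_R)(-\bubu))^k$, which after rewriting $(\scale_L-\scale)^k=\sum_{\ell}\binom{k}{\ell}(\scale_L-\scale_R)^{k-\ell}(1-\bubu)^{k-\ell}(\scale_L-\scale)^{\ell}$... more simply, one writes $d^\oo_{L,\scale_L}(k)$ as a binomial sum in $d^\oo_{L,\scale}(\ell)=(\scale_L-\scale)^\ell$ — but here there is a subtlety since $(\scale_L-\scale)=(\scale_L-\scale_R)(1-\bubu)$ is already a monomial, so the cleanest route is the operator identity $e^{-(\scale_L-\scale)\widehat{\mathcal K}_L}$ applied to $d^\oo_{L,\scale}$.

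First I would record, for a single site, the elementary identity
\[
d^\oo_{x,\scale_x}(k,n)=\sum_{\ell=0}^{k}\binom{k}{\ell}\bigl(-(\scale_x-\scale)\bigr)^{k-\ell}\,d^\oo_{x,\scale}(\ell,n),
\]
which follows from \eqref{eq:fake_binomial} (or from applying $e^{-(\scale_x-\scale)\widehat{\mathcal K}_x}$ to both sides of the $\scale$-version of \eqref{eq:fake_binomial}), together with the analogous boundary identities
\[
d^\oo_{L,\scale_L}(k)=\sum_{\ell=0}^{k}\binom{k}{\ell}\bigl(\scale_L-\scale\bigr)^{k-\ell}\,d^\oo_{L,\scale}(\ell),\qquad
d^\oo_{R,\scale_R}(k)=\sum_{\ell=0}^{k}\binom{k}{\ell}\bigl(\scale_R-\scale\bigr)^{k-\ell}\,d^\oo_{R,\scale}(\ell).
\]
Substituting $\scale_x-\scale=(\beta_x'-\bubu)(\scale_L-\scale_R)$, $\scale_L-\scale=(1-\bubu)(\scale_L-\scale_R)$ and $\scale_R-\scale=-\bubu(\scale_L-\scale_R)$, each single-site sum acquires a prefactor $(\scale_L-\scale_R)^{\text{(number of "missing'' particles at that site)}}$ times $(-1)$ raised to the same power (for the bulk and $R$ factors) times the corresponding $E$-coefficient.

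Then I would multiply the three types of single-site expansions together over $\{L\}\cup V\cup\{R\}$. The product of sums becomes a single sum over all sub-configurations $\zeta\le\xi$ (one index $\ell$ per site, collected into the configuration $\zeta$), the binomial/sign/power factors assemble exactly into $(\scale_L-\scale_R)^{|\xi|-|\zeta|}(-1)^{|\xi|-|\zeta|}E_{\boldsymbol\beta',\bubu}(\zeta,\xi)$ because $|\xi|-|\zeta|=\sum_{y}(\xi(y)-\zeta(y))$ and the exponents are additive across sites, and the remaining product $d^\oo_{L,\scale}(\zeta(L))\prod_{x\in V}d^\oo_{x,\scale}(\zeta(x),\eta(x))\,d^\oo_{R,\scale}(\zeta(R))$ is precisely $D^\oo_\scale(\zeta,\eta)$. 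Care is needed only in bookkeeping the sign: the $L$-factor contributes $(1-\bubu)^{k-\ell}$ with no extra sign, while the bulk and $R$ factors contribute $(-1)^{k-\ell}$ times $(\beta_x'-\bubu)^{k-\ell}$ resp.\ $(-\bubu)^{k-\ell}$ — but one checks that $(-1)^{k-\ell}(-\bubu)^{k-\ell}=\bubu^{k-\ell}$ does \emph{not} match $E_{R,\bubu}(\ell,k)=\binom{k}{\ell}(-\bubu)^{k-\ell}$, so the global factor $(-1)^{|\xi|-|\zeta|}$ in the statement absorbs exactly the bulk and $R$ signs and leaves the $L$-factor needing a compensating sign; I would resolve this discrepancy by carefully tracking which factors carry the $(-1)$ from \eqref{eq:fake_binomial} versus which carry it from the boundary identity, and this reconciliation of signs is the one genuinely delicate point. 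Apart from that, the proof is a routine, if slightly tedious, rearrangement of a finite multiple sum, and no convergence or analytic input is required.
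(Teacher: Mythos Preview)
Your approach is essentially the paper's: both exploit the factorized operator identity
\[
D^\oo_{\scale_L,\scale_R,\boldsymbol\beta'} = \Bigl(e^{-(\scale_L-\scale)\widehat{\mathcal K}_L - \sum_{x\in V}(\scale_x'-\scale)\widehat{\mathcal K}_x - (\scale_R-\scale)\widehat{\mathcal K}_R}\Bigr)_\eft D^\oo_\scale\ ,
\]
which at the single-site level is exactly your binomial expansion. The paper writes the operator identity first and then reads off the coefficients; you expand site-by-site and then multiply. These are the same argument.

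Your ``discrepancy'' in the sign bookkeeping is not a genuine subtlety but an artifact of a sign error in your boundary identities. The correct relations are
\[
d^\oo_{L,\scale_L}(k)=\sum_{\ell=0}^{k}\binom{k}{\ell}\bigl(-(\scale_L-\scale)\bigr)^{k-\ell}\,d^\oo_{L,\scale}(\ell)\ ,\qquad
d^\oo_{R,\scale_R}(k)=\sum_{\ell=0}^{k}\binom{k}{\ell}\bigl(-(\scale_R-\scale)\bigr)^{k-\ell}\,d^\oo_{R,\scale}(\ell)\ ,
\]
with the same minus sign as in the bulk identity (check: with your version the $L$-sum equals $2^k(\scale_L-\scale)^k$, not $0^k=d^\oo_{L,\scale_L}(k)$). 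Once this is fixed, \emph{every} site $y\in\{L\}\cup V\cup\{R\}$ contributes a factor $(-1)^{\xi(y)-\zeta(y)}(\scale_L-\scale_R)^{\xi(y)-\zeta(y)}$ times the corresponding $E$-coefficient, because $-(\scale_L-\scale)=-(1-\bubu)(\scale_L-\scale_R)$, $-(\scale_x'-\scale)=-(\beta_x'-\bubu)(\scale_L-\scale_R)$, and $-(\scale_R-\scale)=\bubu(\scale_L-\scale_R)=(-1)(-\bubu)(\scale_L-\scale_R)$. The global $(-1)^{|\xi|-|\zeta|}$ then falls out uniformly, with no special case for $L$.
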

\begin{proof}
	By  definition of the orthogonal duality functions in Theorem \ref{proposition:orthogonal_duality} (see also \eqref{eq:orth_duality_exponential}) and of the functions $D^\oo_{\scale_L,\scale_R,\boldsymbol \beta'}$ in \eqref{eq:duality_improper}, we have
	\begin{align*}
	D^\oo_\scale = \left(e^{- \scale \widehat{\mathcal K}} \right)_\eft D^\cl
	\end{align*}
	and
	\begin{align*}
	D^\oo_{\scale_L,\scale_R,{\boldsymbol \beta'}} = \left(e^{-\scale_L \widehat{\mathcal K}_L-\left(\sum_{x \in V} \scale_x' \widehat{\mathcal K}_x\right)-\scale_R \widehat{\mathcal K}_R} \right)_{\eft} D^\cl\quad\ ,
	\end{align*}
	where
	\begin{equation*}
	\scale_x':= \scale_R + \beta_x'\left(\scale_L-\scale_R \right)\ ,\quad x \in V\ .
	\end{equation*}
	Next,  we get
	\begin{align*}
	D^\oo_{\scale_L,\scale_R,{\boldsymbol \beta'}} &=
	\left(e^{-\scale_L \widehat{\mathcal K}_L-\left(\sum_{x \in V}  \scale_x' \widehat{\mathcal K}_x\right)-\scale_R \widehat{\mathcal K}_R+\scale \widehat {\mathcal K}} \right)_{\eft} \left(e^{- \scale \widehat{\mathcal K}} \right)_\eft D^\cl\quad\\
	&=\ \left(e^{-(\scale_L-\scale) \widehat{\mathcal K}_L-\left(\sum_{x \in V} (\scale_x'-\scale) \widehat{\mathcal K}_x\right)-(\scale_R-\scale) \widehat{\mathcal K}_R} \right)_{\eft} D^\oo_\scale\ ,
	\end{align*}
	where the latter identity is a consequence of the fact that all the operators $\{\widehat{\mathcal K}_x: x \in V \} \cup \{\widehat{\mathcal K}_L, \widehat{\mathcal K}_R \}$ commute. The expressions in terms of $\left(\scale_L-\scale_R\right)$ of the  parameters $\{ \scale_x' : x \in V \}$ in \eqref{eq:conditions_beta} and  $\scale$ in \eqref{eq: scale beta}   yield the final result.	\end{proof}

Then, we derive an analogue of Theorem \ref{theorem:main} for the orthogonal duality functions.
\begin{lemma}\label{lemma:duality}
	For each choice of $\sigma \in \{-1,0,1\}$ and  $\bubu \in \R$ and $\scale \in \R$ as in \eqref{eq: scale beta} and such that $\scale \in \Scale$, we have, for all configurations $\zeta \in \widehat{\mathcal X}$,
	\begin{align}\label{eq:npoint_duality}
	\E_{\mu_{\scale_L,\scale_R,\boldsymbol \beta}}\left[D^\oo_\scale(\zeta,\eta_t) \right] = \left(\scale_L-\scale_R\right)^{|\zeta|} \phi_{t,\boldsymbol \beta,\bubu}(\zeta)\ ,
	\end{align}
	where  $\phi_{t,\boldsymbol \beta,\bubu}(\zeta) \in \R$ is defined as
	\begin{align}\label{eq:phi}
	\phi_{t,\boldsymbol \beta,\bubu}(\zeta)
	:= \widehat \E_\zeta \left[  (1-\bubu)^{\zeta_t(L)} \times \left(\prod_{x \in V} (\beta_x-\bubu)^{\zeta_t(x)} \right) \times (-\bubu)^{\zeta_t(R)}\right]
	\end{align} and, in particular, it does not depend on neither $\scale_L$ nor $\scale_R$, but only on $\boldsymbol \beta$, $\bubu$, $\sigma \in \{-1,0,1\}$ and the underlying geometry of the system.
\end{lemma}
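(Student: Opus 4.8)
The plan is to push the time evolution onto the dual (absorbing) process via orthogonal duality, and then to evaluate the resulting dual expectation by means of a \emph{shifted orthogonality} identity for the single-site orthogonal duality functions against the marginals of the interpolating product measure.

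First I would apply Theorem~\ref{proposition:orthogonal_duality}: since $\scale\in\Scale$, the function $D^\oo_\scale$ is a duality function between $\mathcal L$ and $\widehat{\mathcal L}$, so averaging \eqref{eq:duality_relation_expectations} over $\eta$ with respect to $\mu_{\scale_L,\scale_R,\boldsymbol\beta}$ gives
\[
\E_{\mu_{\scale_L,\scale_R,\boldsymbol\beta}}\!\left[D^\oo_\scale(\zeta,\eta_t)\right] = \widehat\E_\zeta\!\left[\E_{\mu_{\scale_L,\scale_R,\boldsymbol\beta}}\!\left[D^\oo_\scale(\zeta_t,\eta)\right]\right]\ .
\]
For $\sigma\in\{0,1\}$ the interchange of expectation and summation is justified because the orthogonal duality functions are polynomials while the marginals in \eqref{eq:marginals} have moments of every order; for $\sigma=-1$ everything is finite.

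Next I would evaluate, for an arbitrary fixed $\xi'\in\widehat{\mathcal X}$, the inner integral $\E_{\mu_{\scale_L,\scale_R,\boldsymbol\beta}}[D^\oo_\scale(\xi',\eta)]$. Since both $D^\oo_\scale$ and $\mu_{\scale_L,\scale_R,\boldsymbol\beta}=\otimes_{x\in V}\nu_{x,\scale_x}$ are jointly factorized over the sites, this reduces to a product of the single-site quantities $\sum_n d^\oo_{x,\scale}(\xi'(x),n)\,\nu_{x,\scale_x}(n)$. The crucial step is the identity
\[
\sum_{n} d^\oo_{x,\scale}(k,n)\,\nu_{x,\scale_x}(n) = (\scale_x-\scale)^k\ ,
\]
which I would derive by expanding $d^\oo_{x,\scale}$ in classical duality functions via the Newton-type relation \eqref{eq:fake_binomial}, applying the fundamental relation \eqref{eq:fundamental_relation} term by term, and resumming the binomial series; for $\sigma=-1$ one only uses this for $k\le\alpha_x$, which is all that occurs since $\xi'(x)\le\alpha_x$ for $\xi'\in\widehat{\mathcal X}$. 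Combined with the boundary factors $d^\oo_{L,\scale}(\xi'(L))=(\scale_L-\scale)^{\xi'(L)}$ and $d^\oo_{R,\scale}(\xi'(R))=(\scale_R-\scale)^{\xi'(R)}$ from \eqref{eq:single-site_orthogonal_LR}, this yields
\[
\E_{\mu_{\scale_L,\scale_R,\boldsymbol\beta}}\!\left[D^\oo_\scale(\xi',\eta)\right] = (\scale_L-\scale)^{\xi'(L)}\,(\scale_R-\scale)^{\xi'(R)}\,\prod_{x\in V}(\scale_x-\scale)^{\xi'(x)}\ .
\]

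Finally, I would substitute $\scale=\scale_R+\bubu(\scale_L-\scale_R)$ and use \eqref{eq:scalexx}, so that $\scale_L-\scale=(1-\bubu)(\scale_L-\scale_R)$, $\scale_R-\scale=-\bubu(\scale_L-\scale_R)$ and $\scale_x-\scale=(\beta_x-\bubu)(\scale_L-\scale_R)$; factoring out the total power of $(\scale_L-\scale_R)$, which equals $\xi'(L)+\xi'(R)+\sum_{x\in V}\xi'(x)=|\xi'|$, leaves $(1-\bubu)^{\xi'(L)}(-\bubu)^{\xi'(R)}\prod_{x\in V}(\beta_x-\bubu)^{\xi'(x)}$. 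Taking $\xi'=\zeta_t$ and noting $|\zeta_t|=|\zeta|$ by conservation of particles under $\widehat{\mathcal L}$, the factor $(\scale_L-\scale_R)^{|\zeta|}$ pulls out of $\widehat\E_\zeta$ and the remaining dual expectation is exactly $\phi_{t,\boldsymbol\beta,\bubu}(\zeta)$ in \eqref{eq:phi}; its independence of $\scale_L,\scale_R$ is manifest since $\widehat{\mathcal L}$ does not involve the reservoir parameters. I do not expect a genuine obstacle: the only real content is the shifted orthogonality identity, which is immediate from \eqref{eq:fake_binomial} and \eqref{eq:fundamental_relation}, and the sole point requiring attention is the $\sigma=-1$ bound ensuring the Kravchuk polynomials that occur have degree at most $\alpha_x$.
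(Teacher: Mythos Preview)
Your proposal is correct and follows essentially the same route as the paper: apply orthogonal duality to pass to the absorbing dual, use the single-site identity $\sum_n d^\oo_{x,\scale}(k,n)\,\nu_{x,\scale_x}(n)=(\scale_x-\scale)^k$ together with the boundary factors to evaluate the inner expectation, and then substitute $\scale=\scale_R+\bubu(\scale_L-\scale_R)$ to extract $(\scale_L-\scale_R)^{|\zeta|}$. The only difference is that you derive the shifted-orthogonality identity from \eqref{eq:fake_binomial} and \eqref{eq:fundamental_relation}, whereas the paper simply cites it.
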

\begin{proof} Recall the definition of $\mu_{\scale_L,\scale_R,\boldsymbol \beta}$ in \eqref{eq:muLRbeta} and of the scale parameters $\{\scale_x: x \in V \}$ in \eqref{eq:scalexx}.
	By duality (Theorem \ref{proposition:orthogonal_duality}), we have
	\begin{align*}
	\E_{\mu_{\scale_L,\scale_R,\boldsymbol \beta}}\left[D^\oo_\scale(\zeta,\eta_t) \right]&= \sum_{\zeta' \in \widehat{\mathcal X}} \widehat p_t(\zeta,\zeta')\, \E_{\mu_{\scale_L,\scale_R,\boldsymbol \beta}}\left[D^\oo_\scale(\zeta',\eta) \right]\\
	&= \sum_{\zeta' \in \widehat{\mathcal X}} \widehat p_t(\zeta,\zeta') \left\{(\scale_L-\scale)^{\zeta'(L)} \times \left(\prod_{x \in V} (\scale_x-\scale)^{\zeta'(x)} \right) \times (\scale_R-\scale)^{\zeta'(R)} \right\}\ ,	
	\end{align*}
	where this last identity is a consequence of
	\begin{align*}
	\sum_{n \in \N_0} d^\oo_{x,\scale}(k,n)\, \nu_{x,\scale_x}(n) = (\scale_x-\scale)^k
	\end{align*}
	for all $x \in V$ and $k \in \{0,\ldots, \alpha_x\}$ if $\sigma = -1$ and $k \in \N_0$ if $\sigma \in \{0,1\}$ (see e.g.\ \cite{redig_factorized_2018}).
	We obtain \eqref{eq:npoint_duality} with the function $\phi_{t,\boldsymbol \beta,\bubu}$ as in \eqref{eq:phi} by rewriting in terms of the parameters $\boldsymbol \beta$ and $\bubu$ the expression above between curly brackets.
\end{proof}

A combination of Lemma \ref{lemma:replacement} and Lemma \ref{lemma:duality} concludes the proof of Theorem \ref{theorem:main}. Indeed,
\begin{align*}
	\E_{\mu_{\scale_L,\scale_R,\boldsymbol \beta}}\left[D^\oo_{\scale_L,\scale_R,\boldsymbol \beta'}(\xi,\eta_t) \right]&= \sum_{\zeta \leq \xi} \left(\scale_L-\scale_R\right)^{|\xi|-|\zeta|} \left(-1\right)^{|\xi|-|\zeta|} E_{\boldsymbol \beta',\bubu}(\zeta,\xi)\, \E_{\mu_{\scale_L,\scale_R,\boldsymbol \beta}}\left[D^\oo_\scale(\zeta,\eta_t) \right]\\
&=  (\scale_L-\scale_R)^{|\xi|}\sum_{\zeta \leq \xi}  (-1)^{|\xi|-|\zeta|} E_{\boldsymbol \beta',\bubu}(\zeta,\xi)\,  \phi_{t,\boldsymbol \beta,\bubu}(\zeta)\ ,
\end{align*}
which yields \eqref{eq:psi_t beta N} with $\psi_{t,\boldsymbol \beta,\boldsymbol \beta'}(\xi)$ given by
\begin{align}\label{eq:psi_t2}
\psi_{t,\boldsymbol \beta,\boldsymbol \beta'}(\xi) =&\ \sum_{\zeta \in \widehat{\mathcal X}}  (-1)^{|\xi|-|\zeta|} E_{\boldsymbol \beta',\bubu}(\zeta,\xi)\,  \phi_{t,\boldsymbol \beta,\bubu}(\zeta)\ .
\end{align}
We note that, because the l.h.s.\ in \eqref{eq:psi_t beta N} and $(\scale_L-\scale_R)^{|\xi|}$ do not depend on the parameter $\bubu \in \R$,  the whole expression in \eqref{eq:psi_t2} is independent of $\bubu$, and in particular, we obtain \eqref{eq:psi_t beta N} for the choice $\bubu=0$. By passing to the limit as $t$ goes to infinity on both sides in \eqref{eq:psi_t beta N}, by uniqueness of the stationary measure $\mu_{\scale_L,\scale_R}$, we obtain \eqref{eq:psi_infty_beta}--\eqref{eq:psi_infty_beta2}.

\section{Exponential moments and generating functions}\label{section:exponential_generating_functions}
In this section we use the fact that the orthogonal dualities have explicit and simple generating functions in order to produce a formula for the joint moment generating function of the occupation variables in the non-equilibrium stationary state, in terms of the absorbing dual started from a random configuration $\xi$ of which the distribution is related to the  reservoir parameters. We recall that $\Scale=[0,1]$ if $\sigma= -1$ and $\Scale=[0,\infty)$ if $\sigma \in \{0,1\}$.

\begin{theorem}\label{theorem joint moment generating function}
	Let $\boldsymbol \lambda = \{\lambda_x: x \in V\} \in \R^N$ be such that, for all $x\in V$,
	\begin{equation}\label{eq: Lambda_x}
	\Lambda_x:=1+\frac{\lambda_x}{1+\sigma \lambda_x(1+\bar\scale_x)}\ge 0\ ,
	\end{equation}
	and
	\begin{align}\label{eq: K_x}
	\kappa_x := \frac{\lambda_x(\scale_L-\scale_R)}{1+\sigma \lambda_x\left(1-\left(\scale_L-\scale_R \right) \right)}\in \Scale\ .
	\end{align}
	Then,	 we have
	\begin{align}\label{eq:identity_frank}
	\E_{\mu_{\scale_L,\scale_R}}\left[\prod_{x \in V}\left(\Lambda_x \right)^{\eta(x)}\right]
	= \left(\prod_{x \in V} J_{\scale_L,\scale_R,\lambda_x}\right) \E_{\mu_{\boldsymbol k}}[\psi],
	\end{align}
	and, for all $t\geq 0$,
	\begin{align}\label{eq:identity_frank time t}
	\E_{\mu_{\bar{\boldsymbol \scale}}}\left[\prod_{x \in V}\left(\Lambda_x \right)^{\eta_t(x)}\right]
	= \left(\prod_{x \in V} J_{\scale_L,\scale_R,\lambda_x}\right) \E_{\mu_{\boldsymbol k}}[\psi_t],
	\end{align}
	where  $\psi$ and $\psi_t$ are given in \eqref{eq:psi} and \eqref{eq:psi_t}, respectively,  $\mu_{\boldsymbol k}=\otimes_{x\in V} \nu_{ x, \kappa_x}$ is the probability measure  defined in \eqref{eq:muscale} with  parameters $\boldsymbol \kappa=\{\kappa_x: x \in V\}$, viewed as a probability measure on $\widehat{\mathcal X} $  concentrated on $\widehat{\mathcal Y}$, and
	\begin{align}\label{eq: J}
	J_{\scale_L,\scale_R,\lambda_x}:= \begin{dcases}
	e^{\alpha_x\lambda_x(\bar \scale_x+(\scale_L-\scale_R))} &\text{if}\ \sigma = 0\\
	\left(\frac{1+\sigma \lambda_x (1+\bar \scale_x)}{1+\sigma \lambda_x(1-(\scale_L-\scale_R))} \right)^{\sigma \alpha_x} &\text{if}\ \sigma \in \{-1,1\}\ .	
	\end{dcases}
	\end{align}
\end{theorem}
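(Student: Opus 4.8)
The plan is to expand the monomials $\prod_{x\in V}(\Lambda_x)^{\eta(x)}$ in the one-site orthogonal polynomial dualities $d^\oo_{x,\bar\scale_x}(k,\cdot)$ and then to integrate term by term, invoking Theorem~\ref{theorem:n_correlation} (in the extended form \eqref{eq: psi infinity general}), respectively \eqref{eq:psi_t5}--\eqref{eq:psi_t}, to evaluate each resulting term. The key input is the following one-site generating-function identity: for each $x\in V$ and each admissible occupation $n$ (i.e.\ $n\in\{0,\dots,\alpha_x\}$ if $\sigma=-1$ and $n\in\N_0$ if $\sigma\in\{0,1\}$),
\begin{equation*}
\sum_{k\ge 0} a_x(k)\, d^\oo_{x,\bar\scale_x}(k,n) = (\Lambda_x)^n,\qquad a_x(k):=J_{\scale_L,\scale_R,\lambda_x}\,\nu_{x,\kappa_x}(k)\,(\scale_L-\scale_R)^{-k}\ .
\end{equation*}
Since $\kappa_x$ carries a factor $(\scale_L-\scale_R)$ (cf.\ \eqref{eq: K_x}) and $\nu_{x,\kappa_x}(k)$ carries a factor $\kappa_x^{\,k}$ (cf.\ \eqref{eq:marginals}--\eqref{eq:explicitnu}), the coefficient $a_x(k)$ is a genuine real number with no singularity as $\scale_L-\scale_R\to 0$.

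I would prove this one-site identity by a direct special-function computation, separately for $\sigma\in\{-1,0,1\}$: insert the Newton-binomial relation $d^\oo_{x,\scale}(k,n)=\sum_{\ell=0}^k\binom{k}{\ell}d^\cl_x(\ell,n)(-\scale)^{k-\ell}$ from \eqref{eq:fake_binomial}, exchange the (finite in $\ell$, absolutely convergent in $k$) summations, sum the exponential/binomial/negative-binomial series in $k$, and finally carry out the remaining hypergeometric summation over $\ell$; the closed forms for $\Lambda_x$ in \eqref{eq: Lambda_x} and $J_{\scale_L,\scale_R,\lambda_x}$ in \eqref{eq: J} are precisely what comes out. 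For instance, for $\sigma=0$ this produces $e^{-\alpha_x\kappa_x(1+\bar\scale_x/(\scale_L-\scale_R))}\,(1+\kappa_x/(\scale_L-\scale_R))^{n}$, and substituting $\kappa_x=\lambda_x(\scale_L-\scale_R)$ from \eqref{eq: K_x} yields exactly $(\Lambda_x)^n$ and the prefactor $J_{\scale_L,\scale_R,\lambda_x}^{-1}$. Alternatively, the identity is merely the specialization of the classical generating functions of the Charlier ($\sigma=0$), Kravchuk ($\sigma=-1$) and Meixner ($\sigma=1$) polynomials, which one may quote from e.g.\ \cite{koekoek_hypergeometric_2010, redig_factorized_2018}.

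Granting the one-site identity, the remainder is bookkeeping. Because $D^\oo_{\bar{\boldsymbol \scale}}(\xi,\cdot)=\prod_{x\in V}d^\oo_{x,\bar\scale_x}(\xi(x),\cdot)$ for $\xi\in\widehat{\mathcal Y}$ (the boundary factors equal $d^\oo_{L,\scale_L}(0)=d^\oo_{R,\scale_R}(0)=1$) and vanishes off $\widehat{\mathcal Y}$ by \eqref{eq:zero_boundary}, multiplying the one-site identity over $x\in V$ gives, pointwise in $\eta$,
\begin{equation*}
\prod_{x\in V}(\Lambda_x)^{\eta(x)}=\sum_{\xi\in\widehat{\mathcal Y}}\Big(\prod_{x\in V}a_x(\xi(x))\Big)\,D^\oo_{\bar{\boldsymbol \scale}}(\xi,\eta)\ .
\end{equation*}
Integrating against $\mu_{\scale_L,\scale_R}$ and using $\E_{\mu_{\scale_L,\scale_R}}[D^\oo_{\bar{\boldsymbol \scale}}(\xi,\eta)]=(\scale_L-\scale_R)^{|\xi|}\psi(\xi)$ (the extension of Theorem~\ref{theorem:n_correlation} recorded in \eqref{eq: psi infinity general}), together with $a_x(k)(\scale_L-\scale_R)^k=J_{\scale_L,\scale_R,\lambda_x}\,\nu_{x,\kappa_x}(k)$, one obtains $\prod_{x\in V}a_x(\xi(x))\,(\scale_L-\scale_R)^{|\xi|}=\big(\prod_{x\in V}J_{\scale_L,\scale_R,\lambda_x}\big)\,\mu_{\boldsymbol k}(\xi)$, and hence \eqref{eq:identity_frank}. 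The finite-time statement \eqref{eq:identity_frank time t} follows in exactly the same way, now integrating against $\mu_{\bar{\boldsymbol \scale}}$ and using $\E_{\mu_{\bar{\boldsymbol \scale}}}[D^\oo_{\bar{\boldsymbol \scale}}(\xi,\eta_t)]=(\scale_L-\scale_R)^{|\xi|}\psi_t(\xi)$ from \eqref{eq:psi_t5}--\eqref{eq:psi_t}.

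Two points deserve care. First, the term-by-term integration (interchange of $\sum_k$ and $\sum_\xi$ with the expectation): for $\sigma=-1$ all sums are finite and nothing is needed, while for $\sigma\in\{0,1\}$ one must check that $\sum_{k\ge0}|a_x(k)|\,|d^\oo_{x,\bar\scale_x}(k,n)|<\infty$ for all $n$ and that the resulting bound is $\mu_{\scale_L,\scale_R}$-integrable — this is exactly where the hypotheses $\Lambda_x\ge0$ and $\kappa_x\in\Scale$ in \eqref{eq: Lambda_x}--\eqref{eq: K_x} enter, ensuring at once that $\mu_{\boldsymbol k}$ is a bona fide probability measure and that the generating series and the left-hand sides of \eqref{eq:identity_frank}--\eqref{eq:identity_frank time t} converge absolutely. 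Second, when $\scale_L=\scale_R=\scale$ the relation $\E_{\mu_{\scale_L,\scale_R}}[D^\oo_{\bar{\boldsymbol \scale}}(\xi,\eta)]=(\scale_L-\scale_R)^{|\xi|}\psi(\xi)$ degenerates; in that equilibrium case $\kappa_x=0$, $\mu_{\boldsymbol k}$ is the Dirac mass at the empty configuration, $\E_{\mu_{\boldsymbol k}}[\psi]=\psi(\text{empty})=1$, and \eqref{eq:identity_frank} collapses to the one-site moment generating function identity $\E_{\nu_{x,\scale}}[(\Lambda_x)^{\eta(x)}]=J_{\scale,\scale,\lambda_x}$, immediate from \eqref{eq:marginals}. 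The one genuinely computational — though still routine — step is the one-site identity for $\sigma=\pm1$, where one must resum correctly the $_2F_1$'s in \eqref{eq:single-site_orthogonal} and, for $\sigma=1$, control the radius of convergence of the Meixner generating function.
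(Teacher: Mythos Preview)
Your proposal is correct and follows essentially the same route as the paper. The paper packages your one-site generating-function identity via an operator $\varUpsilon_x$ acting on the $\xi$-variable (its Lemma on ``duality and generating functions'' is exactly your identity $\sum_k a_x(k)\,d^\oo_{x,\bar\scale_x}(k,n)=(\Lambda_x)^n$, proved in the same way through \eqref{eq:fake_binomial}), then exchanges $\varUpsilon$ with the expectation and invokes \eqref{eq:psi_t5} before passing to $t\to\infty$; the only cosmetic differences are this operator formalism and the order in which the finite-time and stationary identities are derived.
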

\begin{remark}[\textsc{conditions \eqref{eq: Lambda_x} \& \eqref{eq: K_x}}]		
	Condition \eqref{eq: Lambda_x} is obtained for
	\begin{align}
	\lambda_x\subset \begin{dcases}
	\left(-\infty,\frac{1}{1+\scale_x}\right]\cup \left[\frac{1}{\scale_x},\infty\right)&\text{if}\ \sigma = -1\\
	\left[-1,\infty\right) &\text{if}\ \sigma = 0\\
	\left(-\infty,-\frac{1}{1+\scale_x}\right]\cup \left[-\frac{1}{2+\scale_x},\infty\right)&\text{if}\ \sigma = 1\ ,
	\end{dcases}
	\end{align}
	while condition \eqref{eq: K_x} for
	\begin{enumerate}[label={\normalfont (\roman*)}]
		\item Case $\scale_L-\scale_R\ge 0$ : \begin{align}
		\frac{1}{\lambda_x}\subset \begin{dcases}
		\left[1,\infty\right)&\text{if}\ \sigma = -1\\
		\left[0,\infty\right) &\text{if}\ \sigma = 0\\
		\left[\scale_L-\scale_R-1,\infty\right)&\text{if}\ \sigma = 1\ ,
		\end{dcases}
		\end{align}
		\item Case $\scale_L-\scale_R\le 0$ :\begin{align}
		\frac{1}{\lambda_x}\subset \begin{dcases}
		(-\infty,1-\scale_L+\scale_R]&\text{if}\ \sigma = -1\\
		(-\infty,-1) &\text{if}\ \sigma = 0\\
		(-\infty,\scale_L-\scale_R-1]&\text{if}\ \sigma = 1.\\
		\end{dcases}
		\end{align}
	\end{enumerate}
\end{remark}

We devote the remaining of this section to the proof of Theorem \ref{theorem joint moment generating function}. To this purpose, let us recall the definition of  $\{w_x: x \in V\}$ and $\{z_{x,\cdot}: x \in V \}$ in \eqref{eq:wx} and \eqref{eq:z}, respectively.
\begin{definition}[\textsc{single-site generating functions}] \label{definition:intertwiners}
	For each choice of $\sigma \in \{-1,0,1\}$, for all $x \in V$ and for all functions $f: \N_0 \to \R$, we define
	\begin{align}\label{eq:single-site_genfunction}
	\left(\varUpsilon_x f\right)(\lambda)\ :=\ \sum_{k=0}^\infty \frac{w_x(k)}{k!} \frac{\left(\frac{\lambda}{1+\sigma \lambda}\right)^k}{z_{x,\lambda}} f(k),
	\end{align}
	\begin{align*}
	(	\varUpsilon_L f)(\lambda)&:= \sum_{k=0}^\infty \frac{(\alpha_L \lambda)^k}{k!} f(k)\, e^{-\alpha_L\lambda}
	\end{align*}
	and
	\begin{align*}
	(	\varUpsilon_R f)(\lambda)&:= \sum_{k=0}^\infty \frac{(\alpha_R \lambda)^k}{k!} f(k)\,e^{-\alpha_R\lambda}\
	\end{align*}
	for all $\lambda \in \R$ such that the above series absolutely converge. 	Moreover, we define \begin{align}\label{eq:intertwiner}
	\varUpsilon:= \varUpsilon_L \otimes \left(\otimes_{x \in V} \varUpsilon_x \right) \otimes \varUpsilon_R\ ,
	\end{align}
	acting on functions $f:\N_0^{N+2}\to \R$.
\end{definition}
\begin{remark}
	If $\lambda \in \Scale$ then, for all $x \in V$ and $f: \N_0 \to \R$, $$	\left(\varUpsilon_x f\right)(\lambda)= \E_{\nu_{x,\lambda}}[f]\ ,$$
	where $\nu_{x,\lambda_x}$ is given in \eqref{eq:explicitnu}.
\end{remark}

As a first step, we investigate the action of the operators $\{\varUpsilon_x: x \in V\}$ on the duality functions.
\begin{lemma}[\textsc{duality and generating functions}]
	For each choice of $\sigma \in \{-1,0,1\}$, for all $\scale \in \Scale$ and for all  $x \in V$,
	\begin{align}\label{eq:single-site_genfunction_classical}
	(\varUpsilon_x)_\eft\, d^\cl_x(\cdot,n)(\lambda)= \frac{\left(1+\frac{\lambda}{1+\sigma \lambda}\right)^n}{z_{x,\lambda}}
	\end{align}
	and
	\begin{align}\label{eq:single-site_genfunction_orthogonal}
	(\varUpsilon_x)_\eft\, d^\oo_{x,\scale}(\cdot,n)(\lambda)  = \frac{\left(1+\frac{\lambda}{1+\sigma \lambda(1+\scale)} \right)^n}{z_{x,\lambda(1+\scale)}}\ .
	\end{align}
	Moreover
	\begin{align*}
	\varUpsilon_\eft D^\oo_{ \scale}(\cdot,\eta)(\boldsymbol \lambda) = e^{-\alpha_L \lambda_L (1+\scale-\scale_L)}	
	\left( \prod_{x \in V} \frac{\left(1+\frac{\lambda_x}{1+\sigma \lambda_x(1+\scale)} \right)^{\eta(x)}}{z_{x,\lambda_x(1+\scale)}}\right)
	e^{-\alpha_R \lambda_R (1+\scale-\scale_R)}\ ,
	\end{align*}
	and, analogously,
	\begin{align}\label{eq:fake_duality+intertwining}
	&\varUpsilon_\eft D^\oo_{\bar{\boldsymbol\scale}}(\cdot,\eta)(\boldsymbol \lambda) = e^{-\alpha_L \lambda_L }	
	\left( \prod_{x \in V} \frac{\left(1+\frac{\lambda_x}{1+\sigma \lambda_x(1+\bar\scale_x)} \right)^{\eta(x)}}{z_{x,\lambda_x(1+\bar \scale_x)}}\right)
	e^{-\alpha_R \lambda_R}\ .	
	\end{align}
\end{lemma}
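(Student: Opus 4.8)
The plan is to reduce everything to the two single‑bulk‑site identities \eqref{eq:single-site_genfunction_classical}--\eqref{eq:single-site_genfunction_orthogonal}, exploiting that $\varUpsilon$ factorises as in \eqref{eq:intertwiner} and that $D^\oo_\scale$ (and $D^\oo_{\bar{\boldsymbol\scale}}$) are in product form, so that only one bulk factor needs a genuine computation while the boundary factors are elementary exponential generating functions. Throughout I write $t:=\tfrac{\lambda}{1+\sigma\lambda}$. For \eqref{eq:single-site_genfunction_classical} I would just substitute the definitions \eqref{eq:single-site_genfunction} and \eqref{eq:single-site_classical} into $(\varUpsilon_x)_\eft d^\cl_x(\cdot,n)(\lambda)$: the weight $w_x(k)$ in $\varUpsilon_x$ cancels the $1/w_x(k)$ in $d^\cl_x$, the indicator truncates the sum at $k=n$, and what remains is $\tfrac{1}{z_{x,\lambda}}\sum_{k=0}^n\binom{n}{k}t^k=\tfrac{(1+t)^n}{z_{x,\lambda}}$ by the binomial theorem — a finite sum, so no convergence issue arises.

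For \eqref{eq:single-site_genfunction_orthogonal} I would insert the relation \eqref{eq:fake_binomial} between orthogonal and classical single‑site duality functions into $\varUpsilon_x$ and interchange the (finite, since $d^\cl_x(\ell,n)=0$ for $\ell>n$) sum over $\ell$ with the sum over $k$, reaching
\begin{align*}
(\varUpsilon_x)_\eft\, d^\oo_{x,\scale}(\cdot,n)(\lambda) = \frac{1}{z_{x,\lambda}}\sum_{\ell\ge 0}\frac{d^\cl_x(\ell,n)}{\ell!}\,t^\ell\sum_{m\ge 0}\frac{w_x(m+\ell)}{m!}\,(-\scale t)^{m}\ .
\end{align*}
The inner series I would evaluate using the factorisation $w_x(m+\ell)=w_x(\ell)\,\widetilde w_\ell(m)$, where $\widetilde w_\ell$ is the weight \eqref{eq:wx} with $\alpha_x$ replaced by $\alpha_x+\sigma\ell$ (a falling factorial if $\sigma=-1$, a power if $\sigma=0$, a rising factorial if $\sigma=1$); summing $\sum_{m\ge 0}\tfrac{\widetilde w_\ell(m)}{m!}(-\scale t)^m$ by the finite binomial theorem, the exponential series, or the negative‑binomial series respectively, I expect this to equal $\tfrac{1}{z_{x,\scale t}(1+\sigma\scale t)^{\ell}}$ in all three cases, with $z_{x,\cdot}$ as in \eqref{eq:z}. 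Substituting back and applying the binomial theorem once more as in the classical case gives $(\varUpsilon_x)_\eft d^\oo_{x,\scale}(\cdot,n)(\lambda)=\tfrac{1}{z_{x,\lambda}z_{x,\scale t}}\bigl(1+\tfrac{t}{1+\sigma\scale t}\bigr)^{n}$, after which it remains to check, for $t=\tfrac{\lambda}{1+\sigma\lambda}$, the two algebraic identities $1+\sigma\scale t=\tfrac{1+\sigma\lambda(1+\scale)}{1+\sigma\lambda}$ (whence $\tfrac{t}{1+\sigma\scale t}=\tfrac{\lambda}{1+\sigma\lambda(1+\scale)}$) and, using $z_{x,\mu}=(1+\sigma\mu)^{\sigma\alpha_x}$ for $\sigma\neq 0$ and $z_{x,\mu}=e^{\alpha_x\mu}$ for $\sigma=0$, the identity $z_{x,\lambda}\,z_{x,\scale t}=z_{x,\lambda(1+\scale)}$. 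This yields \eqref{eq:single-site_genfunction_orthogonal}.

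For the two full‑configuration formulas I would use the product form \eqref{eq:intertwiner} of $\varUpsilon$ together with that of $D^\oo_\scale$ (Theorem \ref{proposition:orthogonal_duality}), so that $\varUpsilon_\eft D^\oo_\scale(\cdot,\eta)(\boldsymbol\lambda)$ is the product of the bulk factors from \eqref{eq:single-site_genfunction_orthogonal} and the two boundary factors, the latter being immediate from Definition \ref{definition:intertwiners} and \eqref{eq:single-site_orthogonal_LR}:
\begin{align*}
(\varUpsilon_L)_\eft\, d^\oo_{L,\scale}(\cdot)(\lambda_L) = e^{-\alpha_L\lambda_L}\sum_{k\ge 0}\frac{\bigl(\alpha_L\lambda_L(\scale_L-\scale)\bigr)^{k}}{k!} = e^{-\alpha_L\lambda_L(1+\scale-\scale_L)}\ ,
\end{align*}
and likewise at the right boundary; assembling the factors gives the stated expression. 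For \eqref{eq:fake_duality+intertwining} I would repeat the argument with $D^\oo_{\bar{\boldsymbol\scale}}$: the bulk factors now carry the site‑dependent parameters $\bar\scale_x$, while the boundary factors use $d^\oo_{L,\scale_L}$ and $d^\oo_{R,\scale_R}$, so that $(\scale_L-\scale_L)^{k}=\1_{\{k=0\}}$ (and analogously at $R$) collapses the boundary sums to their $k=0$ terms $e^{-\alpha_L\lambda_L}$ and $e^{-\alpha_R\lambda_R}$.

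I expect the only non‑routine step to be the evaluation of the inner $k$‑series in the orthogonal case: that is the single place where the three values of $\sigma$ genuinely split, and one has to keep careful track of how the normalisations $z_{x,\cdot}$ recombine into $z_{x,\lambda(1+\scale)}$. A minor point to watch is that for $\sigma=1$ the relevant series converges only for $|\scale t|<1$; this is, however, within the domain on which $\varUpsilon_x$ is defined, so the identity is meaningful there, and for the displayed formulas one simply restricts to such $\lambda$ (or argues by analytic continuation).
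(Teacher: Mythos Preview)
Your proposal is correct and follows essentially the same route as the paper: both expand $d^\oo_{x,\scale}(k,n)$ via \eqref{eq:fake_binomial}, swap the order of summation, and evaluate the resulting inner series over $k$ (your $m$) case-by-case in $\sigma$ --- your $\tfrac{1}{z_{x,\scale t}(1+\sigma\scale t)^\ell}$ is exactly the paper's $F_x(\scale,\lambda,\ell)$ after unwinding notation. The only cosmetic differences are that the paper deduces \eqref{eq:single-site_genfunction_classical} as the $\scale=0$ specialisation of \eqref{eq:single-site_genfunction_orthogonal} rather than proving it separately, and that you spell out the boundary-factor and product-form steps which the paper leaves implicit.
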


\begin{remark} In order to guarantee the absolute convergence of the series in the definition of the operators $\varUpsilon$ in Definition \ref{definition:intertwiners}, for the case $\sigma=1$ we have to choose $\lambda$ and $\scale$ such that
	$$\left|\frac{\scale \lambda}{1+ \lambda} \right| < 1\ .$$
\end{remark}

\begin{proof}
	By \eqref{eq:classical-orthogonal_theta=0}, we prove \eqref{eq:single-site_genfunction_orthogonal} from which, by setting $\scale=0$,  \eqref{eq:single-site_genfunction_classical} follows. By definition of $\varUpsilon_x$ in \eqref{eq:single-site_genfunction}, relation \eqref{eq:fake_binomial} and the form of the functions $\{w_x: x \in V\}$ (see \eqref{eq:wx}), we obtain
	\begin{align*}
	(\varUpsilon_x)_\eft\, d^\oo_{x,\scale}(\cdot,n)(\lambda) =&\ \sum_{k=0}^\infty \frac{w_x(k)}{k!} \frac{\left(\frac{\lambda}{1+\sigma \lambda} \right)^k}{z_{x,\lambda}} d^\oo_{x,\scale}(k,n)\\
	=&\ \sum_{\ell=0}^n \binom{n}{\ell} \frac{\left(\frac{\lambda}{1+\sigma \lambda} \right)^\ell}{z_{x,\lambda}} \sum_{k=\ell}^\infty \frac{w_x(k) }{w_x(\ell)(k-\ell)!}\left(\frac{-\scale \lambda}{1+\sigma \lambda} \right)^{k-\ell}\\
	=&\  \sum_{\ell=0}^n \binom{n}{\ell} \frac{\left(\frac{ \lambda}{1+\sigma \lambda} \right)^\ell}{z_{x,\lambda}} F_x(\scale,\lambda,\ell)\ ,
	\end{align*}
	where, as long as $\left|\frac{\scale \lambda}{1+ \lambda} \right| < 1$ if $\sigma = 1$ and for all $\lambda \in \R$ otherwise,
	\begin{align*}
	F_x(\scale,\lambda,\ell)=\begin{dcases}
	\left(\frac{1+\sigma \lambda (1+\scale)}{1+\sigma\lambda} \right)^{-(\sigma \alpha_x +\ell) } &\text{if } \sigma \in \{-1,1\}
	\\
	e^{-\alpha_x \scale \lambda} &\text{if } \sigma =  0\ .
	\end{dcases}
	\end{align*}
\end{proof}

\begin{proof}[Proof of Theorem~\ref{theorem joint moment generating function}]
	We start by proving \eqref{eq:identity_frank time t}. First, by \eqref{eq:fake_duality+intertwining}, the l.h.s.\ in \eqref{eq:identity_frank time t} equals 
	\begin{align*}
	\text{l.h.s.\ in } \eqref{eq:identity_frank time t}&= \E_{\mu_{\bar{\boldsymbol \scale}}}\left[\varUpsilon_\eft D^\oo_{\bar {\boldsymbol \scale}}(\cdot,\eta_t) (\boldsymbol \lambda)\right] e^{\alpha_L\lambda_L+\alpha_R\lambda_R} \left(\prod_{x \in V} z_{x,\lambda_x(1+\bar \scale_x)}\right)
	\\&= \varUpsilon \left(\left(\scale_L-\scale_R \right)^{|\cdot|} \psi_t(\cdot) \right)(\boldsymbol \lambda)\, e^{\alpha_L\lambda_L+\alpha_R\lambda_R}\left(\prod_{x \in V} z_{x,\lambda_x(1+\bar \scale_x)}\right)\ ,
	\end{align*}
where in the second identity we have 	exchanged  $\varUpsilon_\eft$ and the expectation w.r.t.\  $\eta$ -- two operators acting on different variables -- together with  \eqref{eq:psi_t5}.
By the definition of $\varUpsilon$ (cf.\ Definition \ref{definition:intertwiners}) and
\eqref{eq:zero_boundary_psi} (cf.\ \eqref{eq:Y}), we further get 
\begin{align*}
\text{l.h.s.\ in } \eqref{eq:identity_frank time t}	&= \sum_{\xi \in \widehat{\mathcal Y}} \left(\prod_{x \in V}\frac{w_x(\xi(x))}{(\xi(x))!}\frac{\left(\frac{\lambda_x\left(\scale_L-\scale_R \right)}{1+\sigma \lambda_x} \right)^{\xi(x)}}{z_{x,\lambda_x}} \right) \psi_{t}(\xi)\left(\prod_{x \in V} z_{x,\lambda_x(1+\bar \scale_x)}\right)\ ,
	\end{align*}
which, by the definition of $\mu_{\boldsymbol \kappa}$ (cf.\ the statement of the theorem), equals
\begin{align*}
	\text{l.h.s.\ in } \eqref{eq:identity_frank time t}= \left(\prod_{x \in V} \frac{z_{x,\lambda_x(1+\bar \scale_x)}z_{x,\kappa_x}}{z_{x,\lambda_x}} \right)\sum_{\xi \in \widehat{\mathcal X}} \mu_{\boldsymbol \kappa}(\xi)\,  \psi_{t}(\xi)\ .
	\end{align*}
The explicit form of  $\{z_{x,\cdot}: x \in V\}$ given in \eqref{eq:z} yields \eqref{eq:identity_frank time t}. Sending $t\to \infty$ in \eqref{eq:identity_frank time t}, by the uniqueness of the stationary measure, we obtain \eqref{eq:identity_frank}.
\end{proof}

\appendix

\section{Existence and uniqueness of the equilibrium and non-equilibrium stationary measure}\label{appendix:existence_uniqueness}
In this appendix, we treat with  full details the issue of existence and uniqueness of the stationary measure for $\IRW$ and $\SIP$ in equilibrium and non-equilibrium. In what follows we take either $\sigma = 0$ or $\sigma =1$.

We recall that a probability measure $\mu$ on the countable space $\mathcal X$ (endowed with the discrete topology) is the unique stationary measure  for the particle system $\{\eta_t: t \geq 0 \}$ if, for all bounded functions $f : \mathcal X \to \R$ and for all probability measures $\mu'$ on $\mathcal X$,  the following holds:
\begin{equation}
\lim_{t\to \infty} \E_{\mu'}\left[f(\eta_t)\right] = \E_{\mu}\left[f(\eta) \right]\ .
\end{equation}
Out of all probability measures $\mu$ on $\mathcal X$, we say that $\mu$ is \emph{tempered} if it is characterized by the integrals
\begin{align*}
\E_\mu\left[D^\cl(\xi,\eta)\right]\ ,\quad \text{for all}\quad \xi \in \widehat{\mathcal X}\ .
\end{align*}
To the purpose of determining whether a probability measure $\mu$ is tempered or not, we adopt the following strategy. First, we recall that the  functions $\{D^\cl(\xi,\cdot): \xi \in \widehat{\mathcal X} \}$ are weighted products of factorial moments of the variables $\{\eta(x): x \in V \}$ (see Proposition \ref{proposition:classical_duality}). Then, we express these weighted factorial moments in terms of moments. We conclude by means of a multidimensional Carleman's condition.

By following the aforementioned ideas,  we provide in the following lemma a sufficient condition for a measure to be tempered.
\begin{lemma}\label{lemma:tempered}
	Let $\mu$ be a probability measure on $\mathcal X$. If there exists $\scale \in \Scale=[0,\infty)$ such that
	\begin{align}\label{eq:upperbound_tempered}
	\E_\mu\left[D^\cl(\xi,\eta) \right] \leq \scale^{|\xi|}
	\end{align}
	for all $\xi \in \widehat{\mathcal X}$, then $\mu$ is tempered.
\end{lemma}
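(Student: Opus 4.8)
The plan is to deduce from the one‑sided bound \eqref{eq:upperbound_tempered} that $\mu$ is \emph{moment determinate}, and then to observe that, after passing between falling‑factorial and ordinary moments, this is exactly the statement that $\mu$ is tempered. Throughout $\sigma\in\{0,1\}$, so $\mathcal X=\N_0^{V}$. First I would reduce to moments. For a dual configuration $\xi$ supported on $V$, i.e.\ with $\xi(L)=\xi(R)=0$, formula \eqref{eq:single-site_classical} gives $D^\cl(\xi,\eta)=\prod_{x\in V}\frac{\eta(x)!}{(\eta(x)-\xi(x))!}\,\frac{1}{w_x(\xi(x))}$, and by \eqref{eq:wx} one has $w_x(k)>0$ for every $k\in\N_0$ when $\sigma\in\{0,1\}$. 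Hence the family $\{\E_\mu[D^\cl(\xi,\eta)]:\xi\in\widehat{\mathcal X}\}$ determines, and is determined by, the joint falling‑factorial moments $\E_\mu\big[\prod_{x\in V}\tfrac{\eta(x)!}{(\eta(x)-k_x)!}\big]$, $\mathbf k=(k_x)_{x\in V}\in\N_0^{V}$. Since $n^m=\sum_{j=0}^{m}\stirling{m}{j}\tfrac{n!}{(n-j)!}$ and, conversely, each $\tfrac{n!}{(n-j)!}$ is an integer linear combination of powers of $n$, the joint falling‑factorial moments and the joint ordinary moments of $\mu$ determine one another. Thus $\mu$ is tempered if and only if $\mu$ is the unique probability measure on $\mathcal X$ carrying its given collection of joint ordinary moments.

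Next I would use the hypothesis to dominate the moments of $\mu$ by those of an explicit product law. Taking $\xi=k\delta_x$ in \eqref{eq:upperbound_tempered} and invoking \eqref{eq:fundamental_relation}, which for $\sigma\in\{0,1\}$ reads $\E_{\nu_{x,\scale}}\big[\tfrac{\eta(x)!}{(\eta(x)-k)!}\big]=w_x(k)\scale^{k}$, one gets
\[
\E_\mu\Big[\tfrac{\eta(x)!}{(\eta(x)-k)!}\Big]\ \le\ w_x(k)\,\scale^{k}\ =\ \E_{\nu_{x,\scale}}\Big[\tfrac{\eta(x)!}{(\eta(x)-k)!}\Big]\ ,\qquad x\in V,\ k\in\N_0,
\]
where $\nu_{x,\scale}$ is the $\Poi(\alpha_x\scale)$ marginal if $\sigma=0$ and the $\NegBin(\alpha_x,\tfrac{\scale}{1+\scale})$ marginal if $\sigma=1$ (cf.\ \eqref{eq:marginals}). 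Multiplying these inequalities for $j=0,\dots,m$ by the nonnegative coefficients $\stirling{m}{j}$ and summing, using $n^m=\sum_j\stirling{m}{j}\tfrac{n!}{(n-j)!}$, yields $\E_\mu[\eta(x)^m]\le\E_{\nu_{x,\scale}}[\eta(x)^m]$ for every $m\in\N_0$ and every $x\in V$: each one‑dimensional marginal of $\mu$ has ordinary moments dominated by those of a Poisson, resp.\ Negative‑Binomial, law.

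Finally I would invoke a Carleman‑type criterion. Each $\nu_{x,\scale}$ has geometrically decaying tails, hence finite exponential moments, hence satisfies the one‑dimensional Carleman condition $\sum_{m\ge1}\big(\E_{\nu_{x,\scale}}[\eta(x)^{2m}]\big)^{-1/2m}=\infty$; by the previous step the same condition holds for each one‑dimensional marginal of $\mu$. By the multidimensional Carleman criterion — a probability measure on $\R^{V}$ whose one‑dimensional marginals each satisfy Carleman's condition is determined by its joint moments — $\mu$ is moment determinate, which by the first step is precisely the claim. (Any competing probability measure $\mu'$ on $\mathcal X$ with $\E_{\mu'}[D^\cl(\xi,\eta)]=\E_\mu[D^\cl(\xi,\eta)]$ for all $\xi\in\widehat{\mathcal X}$ satisfies the same bound \eqref{eq:upperbound_tempered}, so it falls under the same argument and must equal $\mu$.)

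The genuinely delicate point is this last step: determinacy of the one‑dimensional marginals alone does \emph{not} imply joint determinacy, so one really needs the quantitative Carleman growth bound on each marginal, and that is exactly what domination by the explicit product law $\mu_\scale$ supplies; the factorial‑versus‑ordinary moment bookkeeping (via Stirling numbers, all with the right signs) and the positivity of the weights $w_x$ are routine. If one wishes to bypass the multidimensional moment theorem, an alternative route is available: \eqref{eq:upperbound_tempered} also gives, for all sufficiently small $s_x\ge0$,
\[
\E_\mu\Big[\prod_{x\in V}(1+s_x)^{\eta(x)}\Big]\ \le\ \prod_{x\in V}\sum_{k\ge0}\frac{w_x(k)}{k!}\,(\scale s_x)^{k}\ <\ \infty\ ,
\]
so the probability generating function of $\mu$ extends holomorphically to a polydisc of polyradius strictly larger than one; its derivatives at the all‑ones point are the joint falling‑factorial moments, whence by the identity theorem two such measures with equal factorial moments coincide.
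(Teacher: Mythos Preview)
Your proof is correct and follows essentially the paper's approach: convert falling-factorial moments to ordinary moments via Stirling numbers of the second kind, then invoke the multidimensional Carleman criterion (the paper cites \cite{schmudgen_moment_2017}). Where the paper derives an explicit bound $\E_\mu[\eta(x)^k]\le(a_xk)^k$ by direct estimation, you instead dominate by the moments of the reference marginals $\nu_{x,\scale}$ and use that Poisson and Negative-Binomial laws have exponential moments---a cleaner packaging of the same idea; the generating-function alternative you sketch at the end is an extra route the paper does not pursue.
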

\begin{proof}
	Let us start by expressing the moments of $\eta(x)$ in terms of single-site classical duality functions in \eqref{eq:single-site_classical}: for all $x \in V$ and for all $k, n \in \N_0$,
	\begin{align*}
	n^k = \sum_{\ell = 0}^k  \stirling{k}{\ell}\,  d^\cl_x(\ell,n)\, w_x(\ell)\ ,
	\end{align*}
	where $\stirling{k}{\ell}$ denotes the Stirling number of the second kind given by
	\begin{equation}
	\stirling{k}{\ell} = \frac{1}{\ell!} \sum_{j=0}^\ell (-1)^{\ell-j}\, \binom{\ell}{j}\, j^k\ .	
	\end{equation} In view of \eqref{eq:upperbound_tempered}, we obtain
	\begin{align*}
	\E_\mu\left[(\eta(x))^k \right]	&= \sum_{\ell = 0}^k  \stirling{k}{\ell}\, \E_\mu\left[D^\cl(\ell\delta_x,\eta) \right] w_x(\ell)\\
	&\leq \sum_{\ell=0}^k \frac{w_x(\ell)}{\ell!}\, \E_\mu\left[ D^\cl(\ell\delta_x,\eta)\right] \sum_{j=0}^\ell  \binom{\ell}{j}\, j^k\\
	&\leq k^k\sum_{\ell=0}^k \frac{ (2\scale)^\ell}{\ell!} w_x(\ell)\ .
	\end{align*}
	By recalling the definition of $w_x(\ell)$ in \eqref{eq:wx}, in both cases with $\sigma = 0$ and $\sigma = 1$, we get
	\begin{align}\label{eq:upperbound_carleman}
	\E_\mu\left[(\eta(x))^k \right]\ \leq\  (a_xk)^k\ ,
	\end{align}
	for all $k \in \N$,
	with $a_x =  (1+2\scale \alpha_x)$ for $\sigma =0$ and $a_x = \lfloor \alpha_x\rfloor! (1+2\scale)^{\lfloor \alpha_x\rfloor+1}$ for $\sigma = 1$.
	Therefore, if $m_x(k):= \E_\mu\left[(\eta(x))^k \right]$, \eqref{eq:upperbound_carleman} yields
	\begin{align*}
	\sum_{k=1}^\infty m_x(2k)^{-\frac{1}{2k}}\geq \frac{1}{a_x} \sum_{k=1}^\infty  \frac{1}{2k} = \infty\ .
	\end{align*}
	Because the above condition holds for all $x \in V$, the multidimensional Carleman condition (see e.g.\ \cite[Theorem 14.19]{schmudgen_moment_2017}) applies. Hence,  $\mu$ is completely characterized by the moments $\{m_x(k): x \in V, k \in \N \}$ and, in turn, is tempered.	
\end{proof}

Now, by means of duality, we observe that, for all $\eta \in \mathcal X$ and $\xi \in \widehat{\mathcal X}$ with $|\xi|=k$,
\begin{align}\label{eq:convergence_classical}
\nonumber
\lim_{t\to \infty} \E_\eta\left[D^\cl(\xi,\eta_t) \right] &= \lim_{t\to \infty} \widehat \E_\xi\left[D^\cl(\xi_t,\eta) \right]\\
&= \sum_{\ell=0}^k \scale_L^\ell\, \scale_R^{k-\ell}\, \widehat \Pr_\xi\left(\xi_\infty = \ell \delta_L + (k-\ell)\delta_R \right)\ .
\end{align}
We note that the expression above does not depend on $\eta \in \mathcal X$ and, moreover,
\begin{align*}
\lim_{t\to \infty} \E_\eta\left[D^\cl(\xi,\eta_t) \right]\leq (\scale_L\vee \scale_R)^{|\xi|}
\end{align*}
for all $\xi \in \widehat{\mathcal X}$.
Therefore, by Lemma \ref{lemma:tempered}, there exists a unique probability measure $\mu_\star$ on $\mathcal X$ such that
\begin{align*}
\E_{\mu_\star}\left[D^\cl(\xi,\eta) \right] = \sum_{\ell=0}^{|\xi|} \scale_L^\ell\, \scale_R^{|\xi|-\ell}\, \widehat \Pr_\xi\left(\xi_\infty = \ell \delta_L + (|\xi|-\ell)\delta_R \right)\ .
\end{align*}
Furthermore, because the convergence in \eqref{eq:convergence_classical} for all $\xi \in \widehat{\mathcal X}$ implies convergence of all marginal moments and because the limiting measure is uniquely characterized by these limiting moments, then, for all $f : \mathcal X \to \R$ bounded and for all $\eta \in \mathcal X$, we have
\begin{align}\label{eq:conv1}
\lim_{t\to \infty} \E_\eta\left[f(\eta_t) \right] = \E_{\mu_\star}\left[f(\eta) \right]\ .
\end{align}
By dominated convergence, \eqref{eq:conv1} yields, for all probability measures $\mu$ on $\mathcal X$ and $f: \mathcal X \to \R$,
\begin{align*}
\lim_{t\to \infty} \E_\mu\left[f(\eta_t) \right] =  \E_\mu\left[\lim_{t\to \infty}\E_\eta\left[f(\eta_t)\right] \right] = \E_{\mu_\star}\left[f(\eta) \right]\ ,
\end{align*}
i.e.\ $\mu_\star$ is the unique stationary measure of the process $\{\eta_t: t\geq 0\}$.	
\qed

\paragraph{Acknowledgments.}
The authors would like to thank Gioia Carinci and Cristian Giardin\`{a} for useful discussions. F.R. and S.F. thank Jean-Ren\'{e} Chazottes for a stay at CPHT (Institut Polytechnique de Paris), in the realm of Chaire d'Alembert (Paris-Saclay University), where part of this work was performed. S.F. acknowledges Simona Villa for her  support in creating the picture. S.F.\ acknowledges financial support from NWO via the 	grant
TOP1.17.019.  F.S.\ acknowledges financial support from the European Union's Horizon 2020 research and innovation programme under the Marie-Sk\l{}odowska-Curie grant agreement No.754411.

\bibliographystyle{acm}

\end{document}